\newtheorem{definition}{Definition}[section]
\newtheorem{lemma}{Lemma}[section]
\newtheorem{property}{Property}[section]
\newtheorem{theorem}{Theorem}[section]
\newtheorem{example}{Example}[section]
\newtheorem{corollary}{Corollary}[section]
\newtheorem{remark}{\em Remark}[section]
\newcommand{\PreserveBackslash}[1]{\let\temp=\\#1\let\\=\temp}
\newcolumntype{C}[1]{>{\PreserveBackslash\centering}p{#1}}
\newcolumntype{R}[1]{>{\PreserveBackslash\raggedleft}p{#1}}
\newcolumntype{L}[1]{>{\PreserveBackslash\raggedright}p{#1}}
\begin{document}

\begin{frontmatter}




\title{Variational formulation and efficient implementation for solving the tempered fractional problems
}


\author{ Weihua Deng 
 and Zhijiang Zhang }

\address{School of Mathematics and Statistics, Gansu Key Laboratory of Applied Mathematics and Complex Systems, Lanzhou University, Lanzhou 730000, P.R. China}

\begin{abstract}
Because of the finiteness of the life span and boundedness of the physical space, the more reasonable or physical choice is the tempered power-law instead of pure power-law for the CTRW model in characterizing the waiting time and jump length of the motion of particles. This paper focuses on providing the variational formulation and efficient implementation for solving the corresponding deterministic/macroscopic models, including the space tempered fractional equation and time tempered fractional equation. The convergence, numerical stability, and a series of variational equalities are theoretically proved. And the theoretical results are confirmed by numerical experiments.


\end{abstract}

\begin{keyword}
tempered trap, tempered L\'{e}vy flight, variational formulation, implementation.




\end{keyword}

\end{frontmatter}
%

\section{Introduction}

In the mesoscopic world, generally there are two types of models to describe the motion of particles, namely, the Langevin type equation and the continuous time random walk (CTRW) model, both of them being fundamental ones in statistic physics. The CTRW model is a stochastic process composed of jump lengths and waiting times with the particular probability distributions.  When the probability distribution(s) of the jump length and/or waiting time are/is power law with divergent second moment for the jump length and/or divergent first moment for the waiting times, the CTRW describes the anomalous diffusion, and its Fokker-Planck equation has space and/or time fractional derivative(s)  \cite{Metzler:00}. Nowadays, the more preferred choice for the distribution of the jump length and waiting time seems to be the tempered power-law, which makes the process very slowly converge to normal diffusion; but, most of the time, the standard normal diffusion can not be observed because of the finite life span of the biological particles. The bounded physical space urges us to use the tempered power-law distribution for the jump length. Many techniques can be used to temper the power-law distribution, such as, discarding the very large jumps directly \cite{Mantegna:94}, adding a high order power-law factor \cite{Sokolov:04} or a nonlinear friction term \cite{Chechkin:05}. Exponentially tempering the power-law distributions seems to be the most popular one \cite{Cartea:07,Meerschaert:09}, which has both the mathematical and technique advantages \cite{Baeumera:10,Sabzikar:15}; and the probability densities of the tempered stable process solve the tempered fractional equation.


For extending and digging out the potential applications of the tempered dynamics, it is necessary to efficiently solve the corresponding deterministic/macoscopic tempered equation, which is the issue this paper is focusing on. In fact, there are already a lot of research works for numerically solving the (non-tempered) fractional partial differential equations (PDEs); almost all of the numerical methods for classical PDEs are extended to the fractional ones, including the finite difference method \cite{Meerschaert:04,Tian:15,Zhaox:15}, the finite element  or  discontinuous finite element method \cite{Jin:15,Deng:08,Deng:13,Ervin:05,Ervin:07,Mustapha:14,Wang:13}, the spectral or spectral element method \cite{Li:09,Li:10,Zayernouri:13}; and the connection of fractional PDEs with nonlocal problem is discussed in \cite{Defterli:15}.
Mathematically, fractional calculus \cite{Podlubny:99} is the special case of the tempered fractional calculus with the parameter $\lambda=0$. And the definition of the tempered fractional calculus is much similar to the one of the fractional substantial calculus \cite{Carmi:10}, but they come from the completely different physical background. The research works of numerical methods for tempered fractional PDEs are very limited. In \cite{Baeumera:10,Chen:15,Lican:15, Sabzikar:15}, the finite difference methods are proposed to solve the tempered space fractional PDEs.  Hanert and Piret in \cite{Hanert:14} consider the Chebyshev pseudospectral method for the space-time tempered fractional diffusion equations. More recently, Zayernouri, Ainsworth, and Karniadakis \cite{Zayernourt:15} investigate the tempered fractional Sturm-Liouville eigenproblems.
The efforts made by this paper can be summarized as two aspects. The first one is to develop the variational space that works for the tempered fractional operators, which can be regarded as the generalization of the theory presented in \cite{Ervin:05,Li:09} for the fractional differential operators; based on the space, the Galerkin and Petro-Galerkin finite element methods get their theoretical framework for solving the tempered fractional PDEs; and the variational properties of the tempered fractional operators are discussed, which should also be useful for the theoretical analysis of discontinuous Galerkin method \cite{Deng:13,Qiu:15,Xu:14} for the PDEs involving the tempered fractional calculus. The second one is focusing on the application of the developed theory and the efficient implementation of the proposed schemes; the implementation details are carefully discussed, and the efficiency is analyzed and illustrated.

The rest of this paper is organized as follows. In Section 2, we introduce some basic  definitions and properties of the tempered fractional calculus, and derive some  essential inequalities. In Section 3,  we provide the variational formulation and derive the variational equalities and inequalities involving the tempered fractional operators.
 Then, in Section 4, we apply the developed framework to solve the space tempered and time tempered fractional PDEs, in particular, the convergence and stability analysis, and the efficient numerical implementation are detailedly discussed.
 The numerical results, presented in Section 5, confirm the computational efficiency of the proposed numerical schemes. Finally, we conclude the paper with some remarks.


\section {Preliminaries: definitions and lemmas to be used}
We start with some definitions and properties of the tempered fractional integrals and derivatives \cite{Baeumera:10,Cartea:07,Sabzikar:15}. In this paper, we use ${}_aD_x^{-\mu}u(x)$ and ${}_xD_b^{-\mu}u(x)$, ${}_a D_x^{\mu}u(x)$ and ${}_xD_b^{\mu}u(x)$, and ${}_a^C D_x^{\mu}u(x)$  to denote the standard left and right Riemann-Liouville fractional integrals,  the  standard left and right Riemann-Liouville fractional derivatives, and the left Caputo fractional derivative of order $\mu$ on $(a,b)$,  respectively, which can be found in \cite{Podlubny:99}. Of course, $(a,b)$ can also be $\mathbb{R}=(-\infty, \infty)$.

\begin{definition}\label{definition:1}
For any $\mu\ge0$  and  fixed parameter $\lambda\ge 0$, the left and right tempered  Riemann-Liouville fractional integrals of function $u(x)$ on $(a,b)$  are, respectively,  defined by
\begin{eqnarray}
&&{}_a\mathbb{D}_x^{-\mu,\lambda}u(x):=e^{-\lambda x}{}_aD_x^{-\mu}\left(e^{\lambda x}u(x)\right)=\int_a^x\frac{(x-\xi)^{\mu-1}}{\Gamma(\mu)}e^{-\lambda(x-\xi)}u(\xi)d\xi,
\end{eqnarray}
and
\begin{eqnarray}
&&{}_x\mathbb{D}_b^{-\mu,\lambda}u(x):=e^{\lambda x}{}_xD_b^{-\mu}\left(e^{-\lambda x}u(x)\right)=\int_x^b\frac{(\xi-x)^{\mu-1}}{\Gamma(\mu)}e^{-\lambda(\xi-x)}u(\xi)d\xi.
\end{eqnarray}
\end{definition}

\begin{definition} \label{definition:2}
For any $n-1\le\mu<n \,(n\in \mathbb{N}^+)$ and fixed parameter $\lambda \ge 0$,  define
\begin{eqnarray}\label{defein2:1}
&&{}_a \mathbb{D}_x^{\mu,\lambda}u(x):=e^{-\lambda x}{}_aD_x^{\mu}\left(e^{\lambda x}u(x)\right)=\frac{e^{-\lambda x}}{\Gamma(n-\mu)}\frac{d^n}{dx^n}\int_a^x\frac{e^{\lambda\xi}u(\xi)}{(x-\xi)^{\mu-n+1}}d\xi,
\end{eqnarray}
and
\begin{eqnarray}\label{defein2:2}
&&{}_x\mathbb{D}_b^{\mu,\lambda}u(x):=e^{\lambda x}{}_xD_b^{\mu}\left(e^{-\lambda x}u(x)\right)=\frac{e^{\lambda x}}{\Gamma(n-\mu)}(-1)^n\frac{d^n}{dx^n}\int_x^b\frac{e^{-\lambda\xi}u(\xi)}{(\xi-x)^{\mu-n+1}}d\xi.
\end{eqnarray}
Then for $1<\mu\le 2$, the left and right tempered Riemann-Liouville  fractional derivatives of function $u(x)$    on $(a,b)$ are, respectively, defined by
\begin{eqnarray}
&&{}_a D_x^{\mu,\lambda}u(x):={}_a \mathbb{D}_x^{\mu,\lambda}u(x)-\lambda^{\mu}u(x)-\mu \lambda^{\mu-1}\frac{du(x)}{d x},
\end{eqnarray}
and
\begin{eqnarray}
&&{}_xD_b^{\mu,\lambda}u(x):={}_x\mathbb{D}_b^{\mu,\lambda}u(x)-\lambda^{\mu}u(x)+\mu \lambda^{\mu-1}\frac{du(x)}{dx}.
\end{eqnarray}
\end{definition}

The tempered   fractional derivative can also be given in the Caputo sense.
\begin{definition}\label{definition:3}
For any $n-1\le\mu<n \, (n\in \mathbb{N}^+)$ and fixed parameter $\lambda\ge 0$, the left tempered Caputo  fractional derivative of function $u(x)$    on $(a,b)$  is defined by
\begin{eqnarray}
{}_a^C\mathbb{D}_t^{\mu,\lambda}u(x):=e^{-\lambda x}\,{}^C_a\mathbb{D}^{\mu}_x\left(e^{\lambda x}u(x)\right)=\frac{e^{-\lambda x}}{{\Gamma(n-\mu)}}\int_a^x\frac{e^{\lambda\xi}}{(x-\xi)^{\mu-n+1}}\left(\frac{d}{dx}+\lambda\right)^n u(\xi)d\xi.
\end{eqnarray}
\end{definition}

If $\lambda=0$, the tempered fractional integrals and derivatives in Definitions \ref{definition:1}, \ref{definition:2} and \ref{definition:3} all reduce to the corresponding standard Riemann-Liouville or Caputo fractional integrals and derivatives \cite{Podlubny:99}. Noting that
\begin{eqnarray} \label{definitionequ}
~~~~~\frac{d(e^{\lambda x}f(x))}{dx}=e^{\lambda x}\left(\frac{d}{dx}+\lambda\right)f(x) {~~\rm and ~}\frac{d(e^{-\lambda x}f(x))}{dx}=e^{-\lambda x}\left(\frac{d}{dx}-\lambda\right)f(x),
\end{eqnarray}
for $n\in \mathbb{N}^+$, it is easy to check that
\begin{eqnarray}
&&{}_a \mathbb{D}_x^{n,\lambda}u(x)=e^{-\lambda x}\frac{d^n\left(e^{\lambda x}u\right)}{d x^n}=\left(\frac{d}{dx}+\lambda\right)^nu(x),\\
&&{}_x \mathbb{D}_b^{n,\lambda}u(x)=(-1)^ne^{\lambda x}\frac{d^n\left(e^{-\lambda x}u\right)}{d x^n}=(-1)^n\left(\frac{d}{dx}-\lambda\right)^nu(x).
\end{eqnarray}
Moreover, it holds that
\begin{eqnarray}
&&{}_a \mathbb{D}_x^{\mu,\lambda}u(x)=\left(\frac{d}{dx}+\lambda\right)^n{}_a\mathbb{D}_x^{-(n-\mu),\lambda}u(x),\label{equationee2}\\
&&{}_x \mathbb{D}_b^{\mu,\lambda}u(x)=(-1)^n\left(\frac{d}{dx}-\lambda\right)^n{}_x\mathbb{D}_b^{-(n-\mu),\lambda}u(x),\label{equationee21}
\end{eqnarray}
 which can be obtained by continuously apply  (\ref {definitionequ}) to  the right-sides of (\ref{equationee2}) and (\ref{equationee21}). Let $\lambda=pU(y)$. Then, they actually become the fractional substantial derivatives defined in \cite{Chen:115, Carmi:10,Deng:15}.

If $u(x)$ possesses $(n-1)$-th derivative at $ a$, one has
\begin{eqnarray}\label{equationee1}
{}_a^C\mathbb{D}_{x}^{\mu,\lambda}u(x)&=&{}_a^C\mathbb{D}_x^{\mu,\lambda}\Big[u(x)-T_{n-1}[u;a]\Big]={}_a\mathbb{D}_x^{\mu,\lambda}\Big[u(x)-T_{n-1}[u;a]\Big]\\
&=&{}_a\mathbb{D}_x^{\mu,\lambda}u(x)-\sum\limits_{k=0}^{n-1}\frac{e^{-\lambda (x-a)}(x-a)^{k-\mu}}{\Gamma(k-\mu+1)}\,{\rm D}_x^ku(x)\Big|_{x=a},\nonumber
\end{eqnarray}
where $ T_{n-1}[u;a]=\sum\limits_{k=0}^{n-1}\frac{e^{-\lambda (x-a)}(x-a)^{k}}{\Gamma(k+1)}{\rm D}_x^ku(x)\Big|_{x=a}$ and ${\rm D}_x^ku(x)=\left(\frac{d}{dx}+\lambda\right)^k u(x)$. Therefore, ${}_a\mathbb{D}_x^{\mu,\lambda}$ and ${}_a^C\mathbb{D}_{x}^{\mu,\lambda}u(x)$  coincide with each other while  ${\rm D}_x^ku(a)=0, k=0,\cdots,n-1$.

The adjoint property of the standard Riemann-Liouville integrals \cite{Deng:08,Podlubny:99}
still holds for their tempered counterparts, i.e.,
\begin{eqnarray}\label{adjonteq}
\left({}_a\mathbb{D}_x^{-\mu,\lambda}u,v\right)=\left({}_a{D}_x^{-\mu}\left(e^{\lambda x}u(x)\right),e^{-\lambda x}v(x)\right)=\left(u(x),{}_x\mathbb{D}_b^{-\mu,\lambda}v(x)\right),
\end{eqnarray}
where $(\cdot, \cdot)$ denotes the   inner product in $L^2$ sense. And by the composition rules of the standard Riemann-Liouville integrals \cite[p. 67-68] {Podlubny:99}, one also has
\begin{eqnarray}\label{semigroup}
&&{}_a\mathbb{D}_x^{-\mu_1,\lambda}{}_a\mathbb{D}_x^{-\mu_2,\lambda}u(x)={}_a\mathbb{D}_x^{-(\mu_1+\mu_2),\lambda}u(x)~~ \forall \mu_1,\mu_2 >0,\\
&&{}_x\mathbb{D}_b^{-\mu_1,\lambda}{}_x\mathbb{D}_b^{-\mu_2,\lambda}u(x)={}_x\mathbb{D}_b^{-(\mu_1+\mu_2),\lambda}u(x)~~\forall \mu_1,\mu_2 >0.\nonumber
\end{eqnarray}

\begin{property} \label{propertyeq1}
Let $u\in L^2(\Omega), n-1<\mu<n\,\, (n\in \mathbb{N}^+)$. Then
\begin{eqnarray} \label{property22eq1}
{}_a\mathbb{D}_x^{\mu,\lambda}\,{}_a\mathbb{D}_x^{-\mu,\lambda}u(x)=u(x), ~~~{}_x\mathbb{D}_b^{\mu,\lambda}\,{}_x\mathbb{D}_b^{-\mu,\lambda}u(x)=u(x).
\end{eqnarray}
Further, suppose that $u(x)$ is $n-1$ times continuously differentiable  and its $n$-th derivative is integrable, and
$\frac{d^k u(x)}{dx^k} \big|_{x=a}=0$\, $\left(\frac{d^k u(x)}{dx^k} \big|_{x=b}=0\right)$ for $k=0,\cdots,n-1$. Then
\begin{eqnarray}\label{propertyeq3}
{}_a\mathbb{D}_x^{-\mu,\lambda}\,{}_a\mathbb{D}_x^{\mu,\lambda}u(x)=u(x) ~~~\left( {}_x\mathbb{D}_b^{-\mu,\lambda}\,{}_x\mathbb{D}_b^{\mu,\lambda}u(x)=u(x) \right).
\end{eqnarray}
\end{property}
\begin{proof} Here we just prove the results for the left tempered fractional operator. The ones for the right tempered fractional operator can be similarly got.
 By
 $$
\begin{array}{lll}
{}_a\mathbb{D}_x^{\mu,\lambda}\,{}_a\mathbb{D}_x^{-\mu,\lambda}u(x)&=&e^{-\lambda x}{}_a{D}_x^{\mu}\left[e^{\lambda x}\left({}_a\mathbb{D}_x^{-\mu,\lambda}u(x)\right)\right]\\
&=&e^{-\lambda x}{}_a{D}_x^{\mu}\left[{}_a{D}_x^{-\mu}\left(e^{\lambda x} u(x)\right)\right]=u(x),\nonumber
\end{array}
$$
one ends the proof of (\ref{property22eq1}). Further, noting that
$$
\begin{array}{lll}
{}_a\mathbb{D}_x^{-\mu,\lambda}\,{}_a\mathbb{D}_x^{\mu,\lambda}u(x) &=& e^{-\lambda x}{}_a{D}_x^{-\mu}\left[e^{\lambda x}\left({}_a\mathbb{D}_x^{\mu,\lambda}u(x)\right)\right]\\
&=& e^{-\lambda x}{}_a{D}_x^{-\mu}\left[{}_a{D}_x^{\mu}\left(e^{\lambda x} u(x)\right)\right],\nonumber
\end{array}
$$
from the discussion of \cite[p. 75-77] {Podlubny:99}, we know that (\ref{propertyeq3}) holds if $\frac{d^k(e^{\lambda x} u(x))}{dx^k}\Big|_{x=a}=0$ for $k=0,\cdots,n-1$, which follows directly after using $\frac{d^k(e^{\lambda x} u(x))}{dx^k}=e^{\lambda x}\left(\frac{d}{dx}+\lambda\right)^ku(x)=e^{\lambda x} {\rm D}_x^k u(x)$ and $\frac{d^k u(x)}{dx^k} \big|_{x=a}=0$ for $k=0,\cdots,n-1$.
\end{proof}

\begin{property} [see \cite{Baeumera:10,Chen:115}]\label{propertyeq2}
For $u\in L^2(\mathbb{R})$ and $\mu\ge 0$, it holds that
\begin{eqnarray*}
&&\mathscr{F}[{}_{-\infty}\mathbb{D}_x^{-\mu,\lambda}u(x)](\omega)=(\lambda+i\omega)^{-\mu}\mathscr{F}[u](\omega),\\
&&\mathscr{F}[{}_x\mathbb{D}_{\infty}^{-\mu,\lambda}u(x)](\omega)=(\lambda-i\omega)^{-\mu}\mathscr{F}[u](\omega).
\end{eqnarray*}
If $u\in C_0^{\infty}(\mathbb{R})$ further, then
\begin{eqnarray*}
&&\mathscr{F}[{}_{-\infty}\mathbb{D}_x^{\mu,\lambda}u(x)](\omega)=(\lambda+i\omega)^\mu\mathscr{F}[u](\omega),\\
&&\mathscr{F}[{}_\mu\mathbb{D}_{\infty}^{\mu,\lambda}u(x)](\omega)=(\lambda-i\omega)^\mu\mathscr{F}[u](\omega).
\end{eqnarray*}
Here $\mathscr{F}[u](\omega)=\int_{-\infty}^{\infty}e^{-i\omega x}u(x)dx$ denotes the Fourier transform of $u(x)$.
\end{property}

\begin{lemma}\label{lemmaeq1}
Let $x\ge0$. Then
\begin{equation}
{2^{\mu-1}}{\left(1+x^{\mu}\right)}\le (1+x)^{\mu}\le \left(1+x^{\mu}\right)\quad 0<\mu\le1
\end{equation}
and
\begin{equation}
\left(1+x^{\mu}\right)\le (1+x)^{\mu}\le 2^{\mu-1}\left(1+x^{\mu}\right)\quad \mu>1.
\end{equation}
\end{lemma}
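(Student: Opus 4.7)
The plan is to prove both double inequalities by studying one auxiliary function, namely
\[
f(x) := \frac{(1+x)^{\mu}}{1+x^{\mu}}, \qquad x \ge 0,
\]
and comparing its behaviour at the boundary values with its unique interior critical point. First I would handle the easier one-sided bounds $(1+x)^{\mu}\le 1+x^{\mu}$ for $0<\mu\le 1$ (and the reverse for $\mu>1$). For $0<\mu\le 1$, set $h(x) = 1 + x^{\mu} - (1+x)^{\mu}$; then $h(0)=0$ and $h'(x) = \mu\bigl(x^{\mu-1}-(1+x)^{\mu-1}\bigr)\ge 0$ because $\mu-1\le 0$ forces $t\mapsto t^{\mu-1}$ to be nonincreasing on $(0,\infty)$. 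Hence $h\ge 0$, giving the claimed bound. The case $\mu>1$ is symmetric: with the same $h$, now $h'\le 0$, yielding $(1+x)^{\mu}\ge 1+x^{\mu}$.

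Next I would locate the extremum of $f$. A direct computation gives
\[
f'(x) = \frac{\mu(1+x)^{\mu-1}\bigl(1+x^{\mu}\bigr) - \mu(1+x)^{\mu}\,x^{\mu-1}}{(1+x^{\mu})^{2}},
\]
and setting the numerator to zero reduces to $1 = x^{\mu-1}$, so the unique interior critical point is $x=1$, where $f(1) = 2^{\mu}/2 = 2^{\mu-1}$. Combined with the boundary values $f(0)=1$ and $\lim_{x\to\infty} f(x)=1$, this pins down the global extrema of $f$ on $[0,\infty)$.

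Finally I would read off the remaining inequalities from the sign of $2^{\mu-1}-1$. For $0<\mu\le 1$ one has $2^{\mu-1}\le 1$, so $x=1$ is the global minimum of $f$ and therefore $f(x)\ge 2^{\mu-1}$ on $[0,\infty)$, which rearranges to $2^{\mu-1}(1+x^{\mu})\le(1+x)^{\mu}$. For $\mu>1$ one has $2^{\mu-1}\ge 1$, so $x=1$ is now the global maximum of $f$, giving $f(x)\le 2^{\mu-1}$, i.e.\ $(1+x)^{\mu}\le 2^{\mu-1}(1+x^{\mu})$. Together with the one-sided bounds of the first step, this completes both assertions.

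There is no real obstacle: the argument is elementary calculus. The only mildly delicate point is verifying that $x=1$ is indeed an extremum of the \emph{correct} type in each regime (minimum when $\mu\le 1$, maximum when $\mu>1$), which is why I would explicitly compare $f(1)=2^{\mu-1}$ with the common boundary value $f(0)=\lim_{x\to\infty}f(x)=1$ rather than relying on a second-derivative test.
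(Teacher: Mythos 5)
Your proof is correct, and it splits naturally into two halves, one of which coincides with the paper's argument and one of which does not. For the bounds comparing $(1+x)^{\mu}$ with $1+x^{\mu}$ (constant $1$), you and the paper do exactly the same thing: monitor the sign of the derivative of $1+x^{\mu}-(1+x)^{\mu}$, which vanishes at $x=0$ and is monotone in the appropriate direction according to whether $\mu\le 1$ or $\mu>1$. For the bounds with the constant $2^{\mu-1}$, the paper's route is different and shorter: it invokes the concavity (for $0<\mu\le1$) or convexity (for $\mu>1$) of $t\mapsto t^{\mu}$ at the midpoint of $1$ and $x$, namely $\frac{1+x^{\mu}}{2}\le\bigl(\frac{1+x}{2}\bigr)^{\mu}$ respectively $\ge$, which rearranges immediately to the claimed inequality. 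You instead optimize the ratio $f(x)=(1+x)^{\mu}/(1+x^{\mu})$, locate its unique interior critical point at $x=1$ where $f(1)=2^{\mu-1}$, and compare with the common boundary value $f(0)=\lim_{x\to\infty}f(x)=1$; this is valid (the implicit compactness step --- that a value of $f$ below $2^{\mu-1}$ would force an interior global minimizer, hence a critical point with value strictly below $2^{\mu-1}$, a contradiction --- is standard), and the sign of the numerator $\mu(1+x)^{\mu-1}(1-x^{\mu-1})$ even gives the min/max classification directly without the boundary comparison. The concavity argument buys brevity and makes the origin of the constant $2^{\mu-1}$ transparent as a midpoint Jensen defect; your ratio argument is more computational but equally elementary and makes the sharpness of the constant (equality at $x=1$) explicit. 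The only cosmetic omission is the trivial case $\mu=1$, where $x^{\mu-1}\equiv1$ and every inequality is an identity.
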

\begin{proof}
Noting that  $g(x)=x^{\mu}$ is concave for $\mu \in (0,1]$ and convex for $\mu>1$ , one has
 \begin{eqnarray}
 \left\{\begin{array}{ll}\frac{1+x^{\mu}}{2}\le \left(\frac{1+x}{2}\right)^{\mu}& 0<\mu\le 1,\\
 \frac{1+x^{\mu}}{2}\ge \left(\frac{1+x}{2}\right)^{\mu}&\mu>1.
 \end{array}\right.
 \end{eqnarray}
Then using the fact that  $g(x)=1+x^{\mu}-(1+x)^{\mu}, x\in [0,\infty)$ is increasing for $\mu\in (0,1]$ and decreasing for $\mu >1$, the proof is completed.
\end{proof}

\begin{lemma} \label{lemmaeq2}
Let $\hat{\theta} \in [0,\frac{\pi}{2}]$ and $1<\alpha\le2$. Then
\begin{eqnarray}
\sin(\alpha\hat{\theta})\ge\sin(\hat{\theta})\cos^{\alpha-1}(\hat{\theta}),
\end{eqnarray}
 where  ``$=$" holds if and only if $\hat{\theta}=0$ if $1<\alpha<2$, and $\hat{\theta}=0$ or $\frac{\pi}{2}$ if $\alpha=2$.
\end{lemma}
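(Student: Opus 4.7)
The plan is to rewrite $\sin(\alpha\hat\theta)$ using the sine addition formula with split $\alpha\hat\theta = (\alpha-1)\hat\theta + \hat\theta$, so that
\[
\sin(\alpha\hat\theta)-\sin(\hat\theta)\cos^{\alpha-1}(\hat\theta)=\sin((\alpha-1)\hat\theta)\cos(\hat\theta)+\sin(\hat\theta)\bigl[\cos((\alpha-1)\hat\theta)-\cos^{\alpha-1}(\hat\theta)\bigr].
\]
The motivation for this split is that it isolates the ``purely trigonometric'' piece from the piece where the exponent $\alpha-1$ is compared with a multiplicative argument. Since $\alpha-1\in(0,1]$ and $\hat\theta\in[0,\pi/2]$, we have $(\alpha-1)\hat\theta\in[0,\pi/2]$, hence $\sin((\alpha-1)\hat\theta)\cos(\hat\theta)\ge 0$ with equality iff $\hat\theta\in\{0,\pi/2\}$. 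Thus the problem reduces to the bracketed inequality
\[
\cos(\beta\hat\theta)\ge \cos^{\beta}(\hat\theta),\qquad \beta:=\alpha-1\in(0,1],\ \hat\theta\in[0,\pi/2).
\]

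\textbf{Main auxiliary inequality.} I would prove this reduced inequality by a log-concavity argument in the exponent. Fix $\hat\theta\in(0,\pi/2)$ and set $g(\beta):=\log\cos(\beta\hat\theta)$ on $[0,1]$. Then $g(0)=0$, $g(1)=\log\cos(\hat\theta)$, and
\[
g''(\beta)=-\hat\theta^{2}\sec^{2}(\beta\hat\theta)<0,
\]
so $g$ is strictly concave. Writing $\beta=\beta\cdot 1+(1-\beta)\cdot 0$ and applying concavity yields $g(\beta)\ge \beta g(1)+(1-\beta)g(0)=\beta\log\cos(\hat\theta)$, i.e.\ $\cos(\beta\hat\theta)\ge\cos^{\beta}(\hat\theta)$, with strict inequality whenever $\beta\in(0,1)$ and $\hat\theta\in(0,\pi/2)$. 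The case $\hat\theta=\pi/2$ is separate but trivial: the right side of the original inequality vanishes because $\cos(\hat\theta)=0$, while the left side equals $\sin(\alpha\pi/2)\ge 0$ (and is strictly positive when $\alpha<2$).

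\textbf{Equality tracking.} Combining the two non-negative pieces proves the inequality. For equality: when $1<\alpha<2$ we have $\beta\in(0,1)$, so strict concavity forces $\hat\theta=0$ in the bracket term (the $\hat\theta=\pi/2$ case of the first term does not give equality overall, as just noted). When $\alpha=2$ the identity $\sin(2\hat\theta)=2\sin(\hat\theta)\cos(\hat\theta)=\sin(\hat\theta)\cos^{\alpha-1}(\hat\theta)+\sin(\hat\theta)\cos(\hat\theta)$ collapses to $\sin(\hat\theta)\cos(\hat\theta)$, which vanishes precisely at $\hat\theta=0$ or $\pi/2$, matching the stated equality cases.

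\textbf{Expected obstacle.} The only non-routine step is the auxiliary inequality $\cos(\beta\hat\theta)\ge\cos^{\beta}(\hat\theta)$ for $\beta\in(0,1]$; the concavity trick above dispatches it cleanly, but overlooking it and attacking $f(\hat\theta):=\sin(\alpha\hat\theta)-\sin(\hat\theta)\cos^{\alpha-1}(\hat\theta)$ directly via $f'$ leads to a messier analysis because the sign of $f'$ is not obvious at either endpoint. Splitting via the addition formula is therefore the key maneuver that reduces the problem to a one-parameter convexity statement.
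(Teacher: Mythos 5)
Your proof is correct and follows essentially the same route as the paper: the same split $\alpha\hat\theta=(\alpha-1)\hat\theta+\hat\theta$ via the sine addition formula, the same reduction to the auxiliary inequality $\cos(\beta\hat\theta)\ge\cos^{\beta}(\hat\theta)$ with $\beta=\alpha-1$, and the same equality bookkeeping (trivial check for $\alpha=2$, strict positivity of the first term for $\hat\theta\in(0,\tfrac{\pi}{2})$). The only difference is local: the paper establishes the auxiliary inequality by differentiating $g(\hat\theta)=\cos(\beta\hat\theta)-\cos^{\beta}(\hat\theta)$ in $\hat\theta$ and showing $g'>0$ on $(0,\tfrac{\pi}{2})$ via $\sin(\beta\hat\theta)/\sin(\hat\theta)<1<\cos^{\beta-1}(\hat\theta)$, whereas you derive it from strict concavity of $\beta\mapsto\log\cos(\beta\hat\theta)$ on $[0,1]$; both arguments are valid and of comparable length, with yours delivering the strictness statement for $\beta\in(0,1)$ and $\hat\theta\in(0,\tfrac{\pi}{2})$ equally directly.
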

\begin{proof} That the inequality holds can be easily checked for the case $\alpha=2$. Now, we prove the case $1<\alpha<2$. The inequality obviously holds if $\hat{\theta}=0$ or $\frac{\pi}{2}$. In the following, we assume that $\hat{\theta}\in (0,\frac{\pi}{2})$. Note that
\begin{eqnarray*}
&&\sin(\alpha \hat{\theta})-\sin(\hat{\theta})\cos^{\alpha-1}(\hat{\theta})\\
&&=\sin\left((\alpha-1)\hat{\theta}+\hat{\theta}\right)-\sin(\hat{\theta})\cos^{\alpha-1}(\hat{\theta})\\
&& > \sin(\hat{\theta})\left(\cos\left((\alpha-1)\hat{\theta}\right)-\cos^{\alpha-1}(\hat{\theta})\right).
\end{eqnarray*}

Letting $g(\hat{\theta})=\cos(\beta \hat{\theta})-\cos^{\beta}(\hat{\theta})$ with $\beta\in (0,1)$ and $\hat{\theta}\in (0,\frac{\pi}{2})$, then
$$
\begin{array}{lll}
g^{\prime}(\hat{\theta}) &=& -\beta \sin(\beta\hat{\theta})+\beta\cos^{\beta-1}(\hat{\theta})\sin(\hat{\theta})\\
 &=& -\beta \sin(\hat{\theta})\left(\frac{\sin(\beta\hat{\theta})}{\sin(\hat{\theta})}-\cos^{\beta-1}(\hat{\theta})\right).
\end{array}
$$
And for $\beta\in(0,1)$ and $\hat{\theta}\in (0,\frac{\pi}{2})$,  there exists $ \frac{\sin(\beta \hat{\theta})}{\sin(\hat{\theta})}< 1, \cos^{\beta-1}(\hat{\theta})>1$. Therefore, $g(\hat{\theta})$ is strictly increasing in $[0,\frac{\pi}{2})$. Then we arrive at the conclusion.
\end{proof}

In the rest of this paper,  we will use $\Omega=(a,b)$ to denote a finite interval. By $A\stackrel{<}{\sim}B$, we mean that $A$ can be bounded by a multiple of $B$, independent of the parameters they may depend on. And the expression $A\sim B$ means that $A\stackrel{<}{\sim}B\stackrel{<}{\sim}A$.

\section{Variational formulation and its related properties for the tempered fractional calculus
}
To develop the variational method for solving the tempered fractional PDEs, one needs to develop the variational formation and discuss its related properties for the tempered fractional calculus, being the issues this section is dealing with.

For any $\mu\ge0$, let  $H^{\mu}(\mathbb{R})$  be the Sobolev space of  order $\mu$ on $\mathbb{R}$,  and ${H}^{\mu}(\Omega)$ denotes the space of restrictions of the functions from $H^{\mu}(\mathbb{R})$.  More specifically,
\begin{equation}
H^\mu(\mathbb{R})=\left\{u(x)\in L^2(R)\,\big|\left|u\right|^2_{H^\mu(\mathbb{R})}<\infty\right\}
\end{equation}
endowed with the seminorm
\begin{equation}
\left|u\right|^2_{H^\mu(\mathbb{R})}= \int_{\mathbb{R}}|\omega|^{2\mu}\left|\mathscr{F}[u](\omega)\right|^2d\omega
\end{equation}
and the norm
\begin{equation}
\left\|u\right\|^2_{H^\mu(\mathbb{R})}=
\int_{\mathbb{R}}\left(1+|\omega|^{2\mu}\right)\left|\mathscr{F}[u](\omega)\right|^2d\omega\sim \int_{\mathbb{R}}\left(1+|\omega|^2\right)^{\mu}\left|\mathscr{F}[u](\omega)\right|^2d\omega;
\end{equation}
\begin{equation}
H^{\mu}(\Omega)=\left\{u\in L^2(\Omega)\,\big| \exists \tilde{u}\in H^\mu(\mathbb{R}) {~\rm such ~that~} \tilde{u}|_{\Omega}=u\right\}
\end{equation}
endowed with
\begin{eqnarray}
|u|_{H^\mu(\Omega)}=\inf_{\tilde{u}|_{\Omega}=u}\left|\tilde{u}\right|_{H^\mu(\mathbb{R})}
{~\rm and~}
 \left\|u\right\|^2_{H^\mu(\Omega)}=\left\|u\right\|^2_{L^2(\Omega)}+|u|^2_{H^\mu(\Omega)}.
 \end{eqnarray}
There are also some other definitions of the fractional Sobolev space; for the equivalence between them refer to \cite{Adams:75,Tartar:07}. $H^{\mu}_0(\Omega)$ denotes the closure of $C_0^{\infty}(\Omega)$ w.r.t. $\|\cdot\|_{H^\mu(\Omega)}$.
We first list the following fractional Poincar{\'e}-Friedrichs inequality and the embeddedness, which can be found in  \cite[Corollary 2.15]{Ervin:05}.
\begin{lemma}\label{lemmaeq3}
Let $0<\mu_1<\mu_2$, and $\mu_1, \mu_2\not=n-\frac{1}{2}\, (n\in \mathbb{N^+})$. If $u\in H_0^{\mu_2}(\Omega)$, one has
\begin{eqnarray}
\left\|u\right\|_{L^2(\Omega)}\stackrel{<}{\sim}\left|u\right|_{H^{\mu_2}_0(\Omega)}{~\rm and~} \left|u\right|_{H^{\mu_1}(\Omega)}\stackrel{<}{\sim}\left|u\right|_{H^{\mu_2}(\Omega)}.
\end{eqnarray}
\end{lemma}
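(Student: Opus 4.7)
The plan is to reduce everything to Fourier analysis on $\mathbb{R}$ by identifying $u \in H_0^{\mu_2}(\Omega)$ with its zero extension, and then to prove the two inequalities separately: the $L^2$ bound via the boundedness of Riemann--Liouville integration on a bounded interval, and the seminorm embedding via a simple high/low frequency split.

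First I would make the standard identification: because $\mu_2 \neq n-1/2$, the zero extension operator $u \mapsto \tilde{u}$ is a continuous map $H_0^{\mu_2}(\Omega) \to H^{\mu_2}(\mathbb{R})$ with $|u|_{H^{\mu_2}_0(\Omega)} = |\tilde{u}|_{H^{\mu_2}(\mathbb{R})}$, and $\tilde{u}$ is supported in $[a,b]$. (At half-integers this step fails, which is exactly the reason for the hypothesis.) Combined with Property \ref{propertyeq2} applied with $\lambda = 0$, this gives
\begin{equation*}
|u|_{H^{\mu_2}_0(\Omega)}^2 \;=\; \int_{\mathbb{R}} |\omega|^{2\mu_2} |\mathscr{F}[\tilde{u}](\omega)|^2 \, d\omega \;=\; \|{}_{-\infty}D_x^{\mu_2}\tilde{u}\|_{L^2(\mathbb{R})}^2 \;=\; \|{}_a D_x^{\mu_2}\tilde{u}\|_{L^2(\Omega)}^2,
\end{equation*}
the last equality holding because $\tilde{u}$ vanishes to the left of $a$.

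For the Poincar\'e-type inequality, I would invoke the semigroup relation of Property \ref{propertyeq1} at $\lambda=0$ to write $\tilde{u} = {}_a D_x^{-\mu_2}({}_a D_x^{\mu_2}\tilde{u})$. The Riemann--Liouville integral ${}_aD_x^{-\mu_2}$ is a convolution with the truncated kernel $(x-\xi)^{\mu_2-1}/\Gamma(\mu_2)$, which lies in $L^1(0,b-a)$; Young's inequality on the bounded interval then yields
\begin{equation*}
\|{}_a D_x^{-\mu_2} v\|_{L^2(\Omega)} \;\stackrel{<}{\sim}\; (b-a)^{\mu_2}\,\|v\|_{L^2(\Omega)}
\end{equation*}
for every $v \in L^2(\Omega)$, so choosing $v = {}_a D_x^{\mu_2}\tilde{u}$ delivers $\|u\|_{L^2(\Omega)} \stackrel{<}{\sim} |u|_{H^{\mu_2}_0(\Omega)}$.

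For the seminorm embedding, I would split the Fourier integral at $|\omega| = 1$:
\begin{equation*}
|u|_{H^{\mu_1}(\Omega)}^2 \;\le\; |\tilde{u}|_{H^{\mu_1}(\mathbb{R})}^2 \;=\; \int_{|\omega|\le 1} |\omega|^{2\mu_1}|\mathscr{F}[\tilde{u}]|^2 d\omega + \int_{|\omega|>1} |\omega|^{2\mu_1}|\mathscr{F}[\tilde{u}]|^2 d\omega.
\end{equation*}
The low-frequency piece is bounded by $\|\tilde{u}\|_{L^2(\mathbb{R})}^2 = \|u\|_{L^2(\Omega)}^2$, which the Poincar\'e inequality just established controls by $|u|_{H^{\mu_2}(\Omega)}^2$; the high-frequency piece uses $|\omega|^{2\mu_1} \le |\omega|^{2\mu_2}$ and is bounded directly by $|u|_{H^{\mu_2}(\Omega)}^2$.

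The main obstacle is really only step one: the identification of $|u|_{H^{\mu_2}_0(\Omega)}$ with $|\tilde{u}|_{H^{\mu_2}(\mathbb{R})}$ and, implicitly, the equivalence between the Fourier-based seminorm and the $L^2$ norm of the one-sided Riemann--Liouville derivative. The half-integer exclusion enters at precisely this point, since extension by zero ceases to be continuous on $H_0^\mu$ when $\mu - 1/2 \in \mathbb{N}$. Once this density/extension lemma is in hand, the remaining arguments are essentially one-line Fourier estimates plus Young's inequality on the bounded interval.
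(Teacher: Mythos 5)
The paper does not prove this lemma at all: it is quoted verbatim from Ervin and Roop, \cite[Corollary 2.15]{Ervin:05}, so there is no internal proof to compare against. Your argument is, in essence, a reconstruction of the cited proof (and of the paper's own later Lemma \ref{lemmaeq4}, which redoes the Poincar\'e step in the tempered setting with $\lambda>0$): pass to the zero extension, use the Fourier characterization of the seminorm, recover $u$ from ${}_aD_x^{\mu_2}u$ by the Riemann--Liouville integral, and apply Young's inequality with the $L^1$ kernel $(x-\xi)^{\mu_2-1}/\Gamma(\mu_2)$ on the bounded interval. The high/low frequency split for the second inequality is standard and correct. You also correctly isolate the one genuinely nontrivial ingredient --- that for $\mu_2\neq n-\tfrac12$ the zero extension is quasi-optimal among all extensions, so that $|\tilde u|_{H^{\mu_2}(\mathbb{R})}\stackrel{<}{\sim}|u|_{H^{\mu_2}(\Omega)}$ with the infimum-based seminorm of the paper --- and correctly attribute the half-integer exclusion to it. Two local points should be repaired. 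First, your displayed \emph{equality} $\|{}_{-\infty}D_x^{\mu_2}\tilde u\|_{L^2(\mathbb{R})}=\|{}_aD_x^{\mu_2}\tilde u\|_{L^2(\Omega)}$ is false: $\tilde u$ vanishing to the left of $a$ gives ${}_{-\infty}D_x^{\mu_2}\tilde u\big|_{\Omega}={}_aD_x^{\mu_2}u$, but ${}_{-\infty}D_x^{\mu_2}\tilde u$ does not vanish to the right of $b$ (it has a power-law tail), so only ``$\geq$'' holds; fortunately that is the direction your chain of estimates actually uses. Second, the composition $\tilde u={}_aD_x^{-\mu_2}\bigl({}_aD_x^{\mu_2}\tilde u\bigr)$ from Property \ref{propertyeq1} is stated under classical smoothness hypotheses and cannot be applied directly to a general $u\in H_0^{\mu_2}(\Omega)$; you need the same density argument the paper runs in Lemma \ref{lemmaeq4} (approximate by $C_0^{\infty}(\Omega)$, use the $L^2$-boundedness of ${}_aD_x^{-\mu_2}$ and of ${}_aD_x^{\mu_2}:H_0^{\mu_2}\to L^2$ to pass to the limit). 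With these two repairs the proof is complete and self-contained, which is arguably a small improvement over the paper's bare citation.
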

In the following, we will focus on the case $\mu\in (0,1]$ but $\mu \not=\frac{1}{2}$.
\begin{theorem}\label{theorem11}
For any $0<\mu\le1$ and fixed parameter $\lambda\ge0$, the operators ${}_a \mathbb{D}_x^{\mu,\lambda}u(x)$ and $ {}_x\mathbb{D}_b^{\mu,\lambda}u(x)$ defined for $u\in C_0^{\infty}(\Omega)$ can be continuously extended to operators from $H_0^{\mu}(\Omega)$ to $L^2(\Omega)$.
\end{theorem}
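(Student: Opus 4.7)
The plan is to reduce the estimate on $\Omega$ to a Fourier-side estimate on the whole line, where Property \ref{propertyeq2} converts the operator into multiplication by $(\lambda+i\omega)^\mu$. For $u\in C_0^\infty(\Omega)$ I will work with the zero extension $\tilde u\in C_0^\infty(\mathbb{R})$ supported in $[a,b]$. Since $\tilde u\equiv 0$ on $(-\infty,a)$, the integral in (\ref{defein2:1}) is unchanged if $a$ is replaced by $-\infty$, and the differentiation commutes with that change, so ${}_a\mathbb{D}_x^{\mu,\lambda}u(x)={}_{-\infty}\mathbb{D}_x^{\mu,\lambda}\tilde u(x)$ for every $x\in\Omega$. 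It therefore suffices to bound $\|{}_{-\infty}\mathbb{D}_x^{\mu,\lambda}\tilde u\|_{L^2(\mathbb{R})}$ by $\|\tilde u\|_{H^\mu(\mathbb{R})}$.

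For the whole-line estimate I will apply Property \ref{propertyeq2} together with Plancherel's theorem to get
$$
\bigl\|{}_{-\infty}\mathbb{D}_x^{\mu,\lambda}\tilde u\bigr\|_{L^2(\mathbb{R})}^2
= \frac{1}{2\pi}\int_{\mathbb{R}}|\lambda+i\omega|^{2\mu}\,|\mathscr{F}[\tilde u](\omega)|^2\,d\omega
= \frac{1}{2\pi}\int_{\mathbb{R}}(\lambda^2+\omega^2)^{\mu}\,|\mathscr{F}[\tilde u](\omega)|^2\,d\omega.
$$
Because $0<\mu\le 1$, Lemma \ref{lemmaeq1} applied to $(1+x)^\mu\le 1+x^\mu$ with $x=\omega^2/\lambda^2$ (and the trivial case $\lambda=0$) yields $(\lambda^2+\omega^2)^\mu\le \lambda^{2\mu}+|\omega|^{2\mu}$, so
$$
\bigl\|{}_{-\infty}\mathbb{D}_x^{\mu,\lambda}\tilde u\bigr\|_{L^2(\mathbb{R})}^2
\stackrel{<}{\sim}\lambda^{2\mu}\|\tilde u\|_{L^2(\mathbb{R})}^2+|\tilde u|_{H^\mu(\mathbb{R})}^2
\stackrel{<}{\sim}\|\tilde u\|_{H^\mu(\mathbb{R})}^2.
$$
Restricting to $\Omega$ and invoking the standard norm equivalence $\|\tilde u\|_{H^\mu(\mathbb{R})}\sim \|u\|_{H_0^\mu(\Omega)}$ for the zero extension (valid precisely because the paper restricts to $\mu\neq 1/2$) gives $\|{}_a\mathbb{D}_x^{\mu,\lambda}u\|_{L^2(\Omega)}\stackrel{<}{\sim}\|u\|_{H_0^\mu(\Omega)}$ for all $u\in C_0^\infty(\Omega)$. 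A continuous extension to $H_0^\mu(\Omega)$ then follows from the density of $C_0^\infty(\Omega)$ in $H_0^\mu(\Omega)$. For the right-sided operator ${}_x\mathbb{D}_b^{\mu,\lambda}$ I will argue symmetrically, extending by zero past $b$ and using the Fourier symbol $(\lambda-i\omega)^\mu$, whose modulus $(\lambda^2+\omega^2)^{\mu/2}$ is identical.

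The main obstacle is not the Fourier calculation, which is mechanical once Property \ref{propertyeq2} and Lemma \ref{lemmaeq1} are at hand, but rather the passage from the whole line back to the bounded interval through the zero extension: this requires the norm equivalence between $H_0^\mu(\Omega)$ and the $H^\mu(\mathbb{R})$-norm of the extension, which only holds away from the half-integer thresholds $\mu=n-\tfrac12$, explaining the paper's standing assumption $\mu\neq 1/2$. Once this equivalence is taken as known, the rest of the proof is an application of Plancherel plus density.
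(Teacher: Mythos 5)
Your proposal is correct and follows essentially the same route as the paper's own proof: zero extension to $\mathbb{R}$, Property \ref{propertyeq2} plus Plancherel to reduce to the multiplier $(\lambda^2+\omega^2)^{\mu}$, Lemma \ref{lemmaeq1} to compare it with $\lambda^{2\mu}+|\omega|^{2\mu}$, and density of $C_0^{\infty}(\Omega)$ in $H_0^{\mu}(\Omega)$ to conclude. The only difference is cosmetic: you single out the upper bound $(1+x)^{\mu}\le 1+x^{\mu}$ where the paper records the two-sided equivalence (which it reuses later), and your remark on where $\mu\neq\tfrac12$ enters is a reasonable gloss on the extension-norm equivalence that the paper uses without comment.
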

\begin{proof}
 First, for $u(x)\in C_0^{\infty}(\mathbb{R})$, by Property \ref {propertyeq2} and Plancherel's theorem, one has
\begin{eqnarray}
&&\left\|{}_{-\infty}\mathbb{D}_x^{\mu,\lambda}u\right\|^2_{L^2(\mathbb{R})}=\left\|{}_{x}\mathbb{D}_\infty^{\mu,\lambda}u\right\|^2_{L^2(\mathbb{R})}
=\frac{1}{2\pi}\int_{\mathbb{R}}\left(\lambda^2+|\omega|^2\right)^{\mu}\left|\mathscr{F}[u](\omega)\right|^2d\omega.
\end{eqnarray}
If $\lambda\not=0$, by Lemma \ref{lemmaeq1} one has
\begin{eqnarray}\label{theorem1eq4}
\left(\lambda^2+|\omega|^2\right)^{\mu}=\lambda^{2\mu}\left(1+\frac{|\omega|^2}{\lambda^2}\right)^{\mu}\sim \left(\lambda^{2\mu}+|\omega|^{2\mu}\right).
\end{eqnarray}
Note that
\begin{equation}\label{theroem1eq3}
\min\left\{1,\lambda^{2\mu}\right\}\left(1+|\omega|^{2\mu}\right)\le \left(\lambda^{2\mu}+|\omega|^{2\mu}\right)\le \max\left\{1,\lambda^{2\mu}\right\}\left(1+|\omega|^{2\mu}\right).
\end{equation}
Therefore,
\begin{eqnarray}\label{theoremeq1}
\left\|{}_{-\infty}\mathbb{D}_x^{\mu,\lambda}u\right\|^2_{L^2(\mathbb{R})}=\left\|{}_{x}\mathbb{D}_\infty^{\mu,\lambda}u\right\|^2_{L^2(\mathbb{R})}
\stackrel{<}{\sim}\left\|u\right\|^2_{H^\mu(\mathbb{R})}.
\end{eqnarray}
For $\lambda=0$, (\ref{theoremeq1}) holds obviously. 

Secondly, for $u\in C_0^{\infty}(\Omega)$, let $\tilde{u}$ defined on $\mathbb{R}$ be the zero extension of $u$. From (\ref{theoremeq1}), one has
 \begin{eqnarray}
\left\|{}_{-\infty}\mathbb{D}_x^{\mu,\lambda}\tilde{u}\right\|^2_{L^2(\mathbb{R})}=\left\|{}_{x}\mathbb{D}_\infty^{\mu,\lambda}\tilde{u}\right\|^2_{L^2(\mathbb{R})}
\stackrel{<}{\sim}\left\|\tilde{u}\right\|^2_{H^\mu(\mathbb{R})}\sim \left\|{u}\right\|^2_{H^\mu(\Omega)}.
\end{eqnarray}
Noting that  ${}_{-\infty}\mathbb{D}_x^{\mu,\lambda}\tilde{u}\big|_{\Omega}={}_{a}\mathbb{D}_x^{\mu,\lambda}u$ and ${}_{x}\mathbb{D}_\infty^{\mu,\lambda}\tilde{u}\big|_{\Omega}={}_{x}\mathbb{D}_b^{\mu,\lambda}{u}$, it yields that
\begin{eqnarray}\label{normeq1}
\left\|{}_{a}\mathbb{D}_x^{\mu,\lambda}{u}\right\|^2_{L^2(\Omega)}\stackrel{<}{\sim}\left\|{u}\right\|^2_{H^\mu(\Omega)}{~\rm and~} \left\|{}_{x}\mathbb{D}_b^{\mu,\lambda}{u}\right\|^2_{L^2(\Omega)}\stackrel{<}{\sim}\left\|{u}\right\|^2_{H^\mu(\Omega)}.
\end{eqnarray}
Then the conclusion follows after using the density of $C_0^{\infty}(\Omega)$ in $H_0^{\mu}(\Omega)$.
\end{proof}

Now, ${}_a \mathbb{D}_x^{\mu,\lambda}$ and $ {}_x\mathbb{D}_b^{\mu,\lambda}$ make sense in  $H^{\mu}_0(\Omega)$, which map $H_0^{\mu}(\Omega)$ to $L^2(\Omega)$; and the norms satisfy (\ref{normeq1}). In fact, for any $u\in H^{\mu}_0(\mathbb{R})$ and $\lambda \not= 0$, by (\ref{theorem1eq4}) and (\ref{theroem1eq3}), it holds that \begin{eqnarray}
\left\|{}_{-\infty}\mathbb{D}_x^{\mu,\lambda}u\right\|^2_{L^2(\mathbb{R})}\sim\left\|{}_{x}\mathbb{D}_\infty^{\mu,\lambda}u\right\|^2_{L^2(\mathbb{R})}
\sim \left\|u\right\|^2_{H^\mu(\mathbb{R})}.
\end{eqnarray}
In the following, we  give the similar results  for $u\in H_0^{\mu}(\Omega)$.

\begin{theorem}\label{theorem2}
For real functions $u(x)$ and $v(x)$ belonging to $H_0^{\mu}(\Omega)$ and $\lambda\ge 0$, define
\begin{eqnarray}
B(u,v):=\left({}_{a}\mathbb{D}_x^{\mu,\lambda}{u},{}_{x}\mathbb{D}_b^{\mu,\lambda}{v}\right) \quad {\rm for}~~0<\mu<\frac{1}{2}
 \end{eqnarray}
 and
 \begin{eqnarray}\label{eqnarreq2ee}
 B(u,v):=-\left({}_{a}\mathbb{D}_x^{\mu,\lambda}{u},{}_{x}\mathbb{D}_b^{\mu,\lambda}{v}\right)+(1+c_0)\lambda^{2\mu}\left(u,v\right) \quad  {\rm for}~~ \frac{1}{2}<\mu\le1,
\end{eqnarray}
where $c_0$ is any given positive constant. Then
\begin{eqnarray}\label{eeeddd}
B(u,u)\sim \left\|u\right\|^2_{H^\mu(\Omega)}\sim \left\|{}_{a}\mathbb{D}_x^{\mu,\lambda}{u}\right\|^2_{L^2(\Omega)}\sim \left\|{}_{x}\mathbb{D}_b^{\mu,\lambda}{u}\right\|^2_{L^2(\Omega)}.
\end{eqnarray}
\end{theorem}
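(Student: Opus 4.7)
The plan is to lift the entire statement to the Fourier side on $\mathbb{R}$ via the zero extension $\tilde{u}$ of $u\in H_0^{\mu}(\Omega)$, for which $\|\tilde{u}\|_{H^{\mu}(\mathbb{R})}\sim\|u\|_{H^{\mu}(\Omega)}$. The crucial preliminary observation is that since $\tilde{u}$ vanishes outside $[a,b]$, ${}_{-\infty}\mathbb{D}_x^{\mu,\lambda}\tilde{u}\equiv 0$ on $(-\infty,a)$ and ${}_x\mathbb{D}_\infty^{\mu,\lambda}\tilde{u}\equiv 0$ on $(b,\infty)$, so their pointwise product is supported in $\Omega$ and
\begin{equation*}
\bigl({}_a\mathbb{D}_x^{\mu,\lambda}u,\,{}_x\mathbb{D}_b^{\mu,\lambda}u\bigr)_{L^2(\Omega)}=\bigl({}_{-\infty}\mathbb{D}_x^{\mu,\lambda}\tilde{u},\,{}_x\mathbb{D}_\infty^{\mu,\lambda}\tilde{u}\bigr)_{L^2(\mathbb{R})}.
\end{equation*}
By Property~\ref{propertyeq2} and Plancherel, this equals $\frac{1}{2\pi}\int_{\mathbb{R}}(\lambda+i\omega)^{2\mu}|\mathscr{F}[\tilde{u}]|^2\,d\omega$; writing $\lambda+i\omega=re^{i\theta}$ and noting that $|\mathscr{F}[\tilde u]|^2$ is even so the imaginary part integrates to zero, the real part reads $\frac{1}{2\pi}\int r^{2\mu}\cos(2\mu\theta)|\mathscr{F}[\tilde u]|^2\,d\omega$. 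Using $\lambda^{2\mu}=r^{2\mu}\cos^{2\mu}|\theta|$, the whole integrand of $B(u,u)$ thus factors uniformly as $r^{2\mu}F(|\theta|)|\mathscr{F}[\tilde u]|^2/(2\pi)$, with $F(\theta)=\cos(2\mu\theta)$ for $\mu<1/2$ and $F(\theta)=(1+c_0)\cos^{2\mu}\theta-\cos(2\mu\theta)$ for $\mu>1/2$.

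The main step is to prove $F$ is bounded between two strictly positive constants on $[0,\pi/2]$, for then Lemma~\ref{lemmaeq1} (which yields $r^{2\mu}\sim\lambda^{2\mu}+|\omega|^{2\mu}\sim 1+|\omega|^{2\mu}$) combined with the Poincar\'e inequality Lemma~\ref{lemmaeq3} immediately gives $B(u,u)\sim\|u\|_{H^\mu(\Omega)}^2$. For $\mu<1/2$ this is trivial, since $2\mu\theta\in[0,\mu\pi)\subset[0,\pi/2)$ forces $F(\theta)\in[\cos(\mu\pi),1]$ with both endpoints positive. The case $\mu>1/2$ is the main obstacle, since $\cos(2\mu\theta)$ changes sign on $[0,\pi/2]$; here Lemma~\ref{lemmaeq2} is essential. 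My plan is to recognise that Lemma~\ref{lemmaeq2} with $\alpha=2\mu$ is precisely the derivative inequality making $g(\theta):=\cos^{2\mu}\theta-\cos(2\mu\theta)$ non-decreasing on $[0,\pi/2]$, so that $g(0)=0$ yields $g\ge 0$ throughout. Writing $F(\theta)=g(\theta)+c_0\cos^{2\mu}\theta$ exhibits $F$ as a sum of two non-negative quantities that cannot vanish simultaneously (at $\theta=\pi/2$ one has $g(\pi/2)=-\cos(\mu\pi)>0$; elsewhere the second summand is strictly positive), so continuity on the compact interval delivers a strictly positive minimum. This also explains why $c_0>0$ must be strict.

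To upgrade to the other equivalences, the upper bounds $\|{}_a\mathbb{D}_x^{\mu,\lambda}u\|_{L^2(\Omega)}^2\lesssim\|u\|_{H^\mu}^2$ and $\|{}_x\mathbb{D}_b^{\mu,\lambda}u\|_{L^2(\Omega)}^2\lesssim\|u\|_{H^\mu}^2$ are already in Theorem~\ref{theorem11}, and for the reverse I would apply Cauchy--Schwarz to $({}_a\mathbb{D}_x^{\mu,\lambda}u,{}_x\mathbb{D}_b^{\mu,\lambda}u)_{L^2(\Omega)}$. When $\mu<1/2$ this reads $\|u\|_{H^\mu}^2\sim B(u,u)\le\|{}_a\mathbb{D}_x^{\mu,\lambda}u\|_{L^2}\|{}_x\mathbb{D}_b^{\mu,\lambda}u\|_{L^2}\lesssim\|{}_a\mathbb{D}_x^{\mu,\lambda}u\|_{L^2}\|u\|_{H^\mu}$, and dividing closes the estimate. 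When $\mu>1/2$ the extra term $(1+c_0)\lambda^{2\mu}\|u\|_{L^2}^2$ must be absorbed, so I would first derive $\|u\|_{L^2(\Omega)}\lesssim\|{}_a\mathbb{D}_x^{\mu,\lambda}u\|_{L^2(\Omega)}$ from the composition identity $u={}_a\mathbb{D}_x^{-\mu,\lambda}{}_a\mathbb{D}_x^{\mu,\lambda}u$ of Property~\ref{propertyeq1} (justified on $H_0^\mu(\Omega)$ by density of $C_0^\infty(\Omega)$) together with the $L^2(\Omega)$-boundedness of ${}_a\mathbb{D}_x^{-\mu,\lambda}$ (immediate from Young's inequality applied to its integrable convolution kernel), and then absorb the resulting mixed term $\|{}_a\mathbb{D}_x^{\mu,\lambda}u\|_{L^2}\|u\|_{H^\mu}$ by a standard Young splitting. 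The right-operator estimate follows by the reflection symmetry $x\mapsto a+b-x$, which interchanges the left and right tempered operators while preserving $H^\mu(\Omega)$ and $L^2(\Omega)$ norms.
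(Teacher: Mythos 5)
Your proposal is correct and follows essentially the same route as the paper: zero extension to $\mathbb{R}$, Plancherel via Property \ref{propertyeq2} to reduce $B(u,u)$ to the Fourier integrand $r^{2\mu}F(\theta)|\mathscr{F}[\tilde u]|^2$, Lemma \ref{lemmaeq2} to show $g(\hat\theta)=\cos^{2\mu}\hat\theta-\cos(2\mu\hat\theta)$ is increasing hence nonnegative, and the $L^2$ lower bound $\|u\|_{L^2}\stackrel{<}{\sim}\|{}_a\mathbb{D}_x^{\mu,\lambda}u\|_{L^2}$ (the paper's Lemma \ref{lemmaeq4}) obtained exactly as you do, from Property \ref{propertyeq1}, density, and Young's inequality, followed by Cauchy--Schwarz to close. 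Your only deviations are cosmetic: a compactness argument for the positive lower bound of $F$ where the paper uses an explicit $\min\{g(\theta_0),c_0\cos^{2\mu}(\theta_0)\}$, and an explicit justification of the support identity that the paper states without comment.
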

\begin{proof}
It is enough to prove the case $u\in C_0^{\infty}(\Omega)$. Denote the zero extension of $u$ by $\tilde{u}$ defined on $\mathbb{R}$.
Then
\begin{eqnarray}\label{thereorm2eq1}
\left({}_{a}\mathbb{D}_x^{\mu,\lambda}{u},{}_{x}\mathbb{D}_b^{\mu,\lambda}{u}\right)
=\left({}_{-\infty}\mathbb{D}_x^{\mu,\lambda}{\tilde{u}},{}_{x}\mathbb{D}_\infty^{\mu,\lambda}{\tilde{u}}\right)_{L^2(\mathbb{R})}
{~\rm and~}\left(u,u\right)= \left(\tilde{u},\tilde{u}\right)_{L^2(\mathbb{R})}.
\end{eqnarray}
Note that
\begin{equation}
\overline{(\lambda+i\omega)^{\mu}}=
\left\{\begin{array}{lc}
\exp(-i2\mu \theta)\overline{(\lambda-i\omega)^{\mu}} &~\mbox{if}~ \omega\ge0,\\
\exp(i2\mu \theta)\overline{(\lambda-i\omega)^{\mu}} &~\mbox {if}~\omega<0,
\end{array} \right.
\end{equation}
where $\overline{(\cdot)}$ denotes complex conjugate, $i=\sqrt{-1}$, and
\begin{equation}\label{thetaeq}
\theta=
\left\{\begin{array}{lc}
\arctan(\frac{|w|}{\lambda})\in [0,\frac{\pi}{2}] &\mbox{if}~\lambda>0,\\
\frac{\pi}{2}, &\mbox {if} ~\lambda=0.
\end{array} \right.
\end{equation}
By Property \ref{propertyeq2} and the Plancherel theorem, it holds that
\begin{eqnarray}\label{thereorm2eq2}
&&2\pi\cdot\left({}_{-\infty}\mathbb{D}_x^{\mu,\lambda}{\tilde{u}},{}_{x}\mathbb{D}_\infty^{\mu,\lambda}{\tilde{u}}\right)_{L^2(\mathbb{R})}=
\int_{\mathbb{R}}(\lambda+i\omega)^{\mu}\mathscr{F}[{\tilde{u}}](\omega)\overline{(\lambda-i\omega)^{\mu}
\mathscr{F}[{\tilde{u}}](\omega)}d\omega\\
&&=\int_{-\infty}^0\Big|(\lambda+i\omega)^{\mu}\mathscr{F}[{\tilde{u}}](\omega)\Big|^2\exp(-i2\mu \theta)d\omega
+\int_{0}^{\infty}\Big|(\lambda+i\omega)^{\mu}\mathscr{F}[{\tilde{u}}](\omega)\Big|^2\exp(i2\mu \theta)d\omega\nonumber\\
&&=\int_{-\infty}^\infty\Big|(\lambda+i\omega)^{\mu}\mathscr{F}[{\tilde{u}}](\omega)\Big|^2\cos(2\mu \theta)d\omega\nonumber\\
&&~+i\left(\int_{0}^{\infty}\sin(2\mu \theta)\Big|(\lambda+i\omega)^{\mu}\mathscr{F}[{\tilde{u}}](\omega)\Big|^2d\omega-
\int_{-\infty}^0\sin(2\mu \theta)\Big|(\lambda+i\omega)^{\mu}\mathscr{F}[{\tilde{u}}](\omega)\Big|^2d\omega\right)\nonumber\\
&&=\int_{-\infty}^\infty\Big|(\lambda+i\omega)^{\mu}\mathscr{F}[{\tilde{u}}](\omega)\Big|^2\cos(2\mu \theta)d\omega,\nonumber
\end{eqnarray}
where in the last step $\overline{\mathscr{F}(\tilde{u})(-\omega)}=\mathscr{F}(\tilde{u})(\omega)$ has been used.

For  $\lambda=0$, one always has $B(u,u)=\frac{\left|\cos(\pi\mu)\right|}{2\pi}\left\|{}_{-\infty}\mathbb{D}_x^{\mu,\lambda}\tilde{u}\right\|^2_{L^2(\mathbb{R})}$;   combining with Lemma \ref{lemmaeq3}, then (\ref{eeeddd}) is obtained.
In the following, we assume that $\lambda \not=0$; and then $\theta$ depends on $\omega$.

For $\mu\in (0,\frac{1}{2})$, one has $2\mu\theta\in[0,\frac{\pi}{2})$ and $0<\cos(\pi\mu)\le\cos(2\mu\theta)\le1$. Therefore,
\begin{eqnarray}\label{eqeeeee}
B( {u},{u})\sim\left\|{}_{-\infty}\mathbb{D}_x^{\mu,\lambda}\tilde{u}\right\|^2_{L^2(\mathbb{R})}\sim \left\|\tilde{u}\right\|^2_{H^\mu(\mathbb{R})}\sim \left\|u\right\|^2_{H^\mu(\Omega)}.
\end{eqnarray}
Then by Holder's inequality and  (\ref{normeq1}), it follows that
\begin{eqnarray*}
\left\|u\right\|^2_{H^\mu(\Omega)}\sim B( {u},{u})\le \left\|{}_{a}\mathbb{D}_x^{\mu,\lambda}{u}\right\|_{L^2(\Omega)}\left\|{}_{x}\mathbb{D}_b^{\mu,\lambda}{u}\right\|_{L^2(\Omega)}
\stackrel{<}{\sim}\left\|{}_{a}\mathbb{D}_x^{\mu,\lambda}{u}\right\|_{L^2(\Omega)}\left\|u\right\|_{H^\mu(\Omega)};
\end{eqnarray*}
thus
\begin{eqnarray}
\left\|u\right\|_{H^\mu(\Omega)}\sim \left\|{}_{a}\mathbb{D}_x^{\mu,\lambda}{u}\right\|_{L^2(\Omega)}.
\end{eqnarray}
 The proof for $\left\|u\right\|_{H^\mu(\Omega)}\sim \left\|{}_{x}\mathbb{D}_b^{\mu,\lambda}{u}\right\|_{L^2(\Omega)}$ is similar.

Since for $\mu\in (\frac{1}{2},1]$,  $2\mu\theta\in(0,{\pi}]$ and the sign of $\cos(2\mu\theta)$ may change, one can only get that
\begin{eqnarray}
\left({}_{-\infty}\mathbb{D}_x^{\mu,\lambda}{\tilde{u}},{}_{x}\mathbb{D}_\infty^{\mu,\lambda}{\tilde{u}}\right)_{L^2(\mathbb{R})}
<\left\|{}_{-\infty}\mathbb{D}_x^{\mu,\lambda}\tilde{u}\right\|^2_{L^2(\mathbb{R})}\stackrel{<}{\sim}\left\|u\right\|^2_{H^\mu(\Omega)};
\end{eqnarray}
we consider (\ref{eqnarreq2ee}) instead. Starting from (\ref{thereorm2eq1}) and (\ref{thereorm2eq2}), one has
\begin{eqnarray*}
&&2\pi\cdot B(u,u)=-\int_{\mathbb{R}}\big|(\lambda+i\omega)^{\mu}\mathscr{F}[{\tilde{u}}](\omega)\big|^2\cos(2\mu \theta)d\omega
+(1+c_0)\lambda^{2\mu}\int_{\mathbb{R}}\big|\mathscr{F}[{\tilde{u}}](\omega)\big|^2d\omega\\
&&~~=\int_{\mathbb{R}}\left(-\cos(2\mu \theta)+\frac{(1+c_0)\lambda^{2\mu}}{(\lambda^2+|\omega|^2)^{\mu}}\right)\big|(\lambda+i\omega)^{\mu}\mathscr{F}[{\tilde{u}}](\omega)\big|^2d\omega\\
&&~~=\int_{\mathbb{R}}\Big(-\cos(2\mu \theta)+(1+c_0)\cos^{2\mu}(\theta)\Big)\big|(\lambda+i\omega)^{\mu}\mathscr{F}[{\tilde{u}}](\omega)\big|^2d\omega,
\end{eqnarray*}
where  $\tan(\theta)=\frac{|\omega|}{\lambda}$ has been used in the last step.
Letting
\begin{eqnarray}
g(\hat{\theta})=-\cos(2\mu \hat{\theta})+\cos^{2\mu}(\hat{\theta})\quad \hat{\theta}\in \big[0,\frac{\pi}{2}\big],
\end{eqnarray}
then
\begin{eqnarray}
g^{\prime}(\hat{\theta})=2\mu\left(\sin(2\mu\hat{\theta})-\cos^{2\mu-1}(\hat{\theta})\sin(\hat{\theta})\right).
\end{eqnarray}
By Lemma \ref{lemmaeq2}, $g(\hat{\theta})$ is strictly increasing in $[0,\frac{\pi}{2}]$, so $g(\frac{\pi}{2})\ge g(\hat{\theta})\ge g(0)=0$; and $g(\hat{\theta})>0$ everywhere except that $\hat{\theta}=0$.
Obviously,
\begin{eqnarray*}
-\cos(2\mu \hat{\theta})+(1+c_0)\cos^{2\mu}(\hat{\theta})=g(\hat{\theta})+c_0\cos^{2\mu}(\hat{\theta}).
\end{eqnarray*}
For any given $\theta_0\in (0,\frac{\pi}{2})$, it follows that
\begin{eqnarray*}
 \min\left\{g(\theta_0),c_0\cos^{2\mu}(\theta_0)\right\}\le g(\hat{\theta})+c_0\cos^{2\mu}(\hat{\theta})\le g(\frac{\pi}{2})+c_0 \quad \hat{\theta}\in \big[0,\frac{\pi}{2}\big],
\end{eqnarray*}
where the property that $\cos^{2\mu}(\theta)\,(\ge0)$ is strictly decreasing in $[0,\frac{\pi}{2}]$ is used. Therefore,
\begin{eqnarray}\label{Theoremeq3}
B( {u},{u})\sim\left\|{}_{-\infty}\mathbb{D}_x^{\mu,\lambda}\tilde{u}\right\|^2_{L^2(\mathbb{R})}\sim \left\|\tilde{u}\right\|^2_{H^\mu(\mathbb{R})}\sim \left\|u\right\|^2_{H^\mu(\Omega)}.
\end{eqnarray}
Assume that (the proof will be given in  Lemma \ref{lemmaeq4})
\begin{eqnarray}\label{assumeq1}
\left\|u\right\|_{L^2(\Omega)}\stackrel{<}{\sim}\left\|{}_{a}\mathbb{D}_x^{\mu,\lambda}{u}\right\|_{L^2(\Omega)} {~~\rm and~~}\left\|u\right\|_{L^2(\Omega)}\stackrel{<}{\sim}\left\|{}_{x}\mathbb{D}_b^{\mu,\lambda}{u}\right\|_{L^2(\Omega)}.
\end{eqnarray}
Combining Lemma \ref{lemmaeq3}, (\ref{normeq1}), (\ref{assumeq1}), and
\begin{equation}\label{Theoremeq4}
 B( {u},{u})\le \left\|{}_{a}\mathbb{D}_x^{\mu,\lambda}{u}\right\|_{L^2(\Omega)}\left\|{}_{x}\mathbb{D}_b^{\mu,\lambda}{u}\right\|_{L^2(\Omega)}
 +(1+c_0)\lambda^{2\mu}\|u\|^2_{L^2(\Omega)},
\end{equation}
it follows that
\begin{eqnarray*}
 B( {u},{u})\stackrel{<}{\sim} \left\|{}_{a}\mathbb{D}_x^{\mu,\lambda}{u}\right\|_{L^2(\Omega)}\left\|u\right\|_{H^\mu(\Omega)}
 +(1+c_0)\lambda^{2\mu}\left\|{}_{a}\mathbb{D}_x^{\mu,\lambda}{u}\right\|_{L^2(\Omega)}\left\|u\right\|_{H^\mu(\Omega)}
\stackrel{<}{\sim}\left\|{}_{a}\mathbb{D}_x^{\mu,\lambda}{u}\right\|_{L^2(\Omega)}\left\|u\right\|_{H^\mu(\Omega)}.
\end{eqnarray*}
Using (\ref{Theoremeq3}) and (\ref{normeq1}) again, one has
\begin{eqnarray}
\left\|u\right\|_{H^\mu(\Omega)}\sim \left\|{}_{a}\mathbb{D}_x^{\mu,\lambda}{u}\right\|_{L^2(\Omega)}.
\end{eqnarray}
The proof for ${}_{x}\mathbb{D}_b^{\mu,\lambda}{u}$ is similar.

\end{proof}

For $u(x)\in L^2(\Omega)$, let $\tilde{u}(x)$ and $\tilde{v}(x)$  be the zero extensions of $u(x)$ and $e^{-\lambda x}x^{\mu-1}$ from $\Omega$ and $(0,b-a]$, respectively, to $\mathbb{R}$.
Define $\tilde{v}*\tilde{u}=\int_{\mathbb{R}}\tilde{v}(x-\xi)\tilde{u}(\xi)d\xi$. Then ${}_a\mathbb{D}_x^{-\mu,\lambda}u(x)=\frac{\tilde{v}*\tilde{u}}{\Gamma(\mu)}\big|_{\Omega}$. Using Young's inequality \cite[p. 90, Theorem 4.30]{Adams:75},
one has
\begin{eqnarray}\label{lefttermpe1}
\left\|{}_a\mathbb{D}_x^{-\mu,\lambda}u\right\|_{L^2(\Omega)}\le  \left\|\tilde{v}*\tilde{u}\right\|_{L^2(\mathbb{R})}\le \frac{1}{\Gamma(\mu)}\left\|e^{-\lambda x}x^{\mu-1}\right\|_{L_1(0,b-a)}\left\|u\right\|_{L^2(\Omega)}\le \frac{(b-a)^{\mu}}{\Gamma(\mu+1)}\left\|u\right\|_{L^2(\Omega)}.
\end{eqnarray}

For the right tempered fractional integral, it follows that
\begin{eqnarray*}
&&\left\|{}_x\mathbb{D}_b^{-\mu,\lambda}u\right\|^2_{L^2(\Omega)}=\int_a^b\left(\int_x^b\frac{(\xi-x)^{\mu-1}}{\Gamma(\mu)}e^{-\lambda(\xi-x)}u(\xi)d\xi\right)^2dx\nonumber\\
&&~~=\int_a^b\left(\int_{-b}^{-x}\frac{(-\xi-x)^{\mu-1}}{\Gamma(\mu)}e^{-\lambda(-\xi-x)}u(-\xi)d\xi\right)^2dx\nonumber\\
&&~~=\int_{-b}^{-a}\left(\int_{-b}^{x}\frac{(x-\xi)^{\mu-1}}{\Gamma(\mu)}e^{-\lambda(x-\xi)}u(-\xi)d\xi\right)^2dx.\nonumber
\end{eqnarray*}
Let $h(x)=u(-x)$ and $\tilde{\Omega}=(-b,-a)$. By (\ref{lefttermpe1}), one has
\begin{eqnarray}\label{righttermpe2}
&&\left\|{}_x\mathbb{D}_b^{-\mu,\lambda}u\right\|^2_{L^2(\Omega)}=\left\|{}_{-b}\mathbb{D}_x^{-\mu,\lambda}h\right\|^2_{L^2(\tilde{\Omega})}\le\frac{(b-a)^{\mu}}{\Gamma(\mu+1)}\left\|h\right\|_{L^2(\tilde{\Omega})}= \frac{(b-a)^{\mu}}{\Gamma(\mu+1)}\left\|u\right\|_{L^2(\Omega)}.
\end{eqnarray}

\begin{lemma}\label{lemmaeq4}
Let $u\in H_0^{\mu}(\Omega)$, and $ \lambda,\,\mu\ge0$. Then
\begin{eqnarray}
\left\|u\right\|_{L^2(\Omega)}\stackrel{<}{\sim}\left\|{}_{a}\mathbb{D}_x^{\mu,\lambda}{u}\right\|_{L^2(\Omega)} {~~\rm and~~}\left\|u\right\|_{L^2(\Omega)}\stackrel{<}{\sim}\left\|{}_{x}\mathbb{D}_b^{\mu,\lambda}{u}\right\|_{L^2(\Omega)}.
\end{eqnarray}
\end{lemma}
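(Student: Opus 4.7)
The plan is to reduce the inequality to the boundedness of the tempered fractional integral that was established already in (\ref{lefttermpe1}) and (\ref{righttermpe2}), by inverting the tempered Riemann--Liouville derivative via Property \ref{propertyeq1}, and then to lift the bound from smooth test functions to all of $H_0^{\mu}(\Omega)$ by density.

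First I would restrict attention to $u \in C_0^{\infty}(\Omega)$. For such a $u$ the boundary traces $\frac{d^k u}{dx^k}\big|_{x=a}$ (and correspondingly at $x=b$) all vanish, so the hypotheses of Property \ref{propertyeq1} are satisfied. Therefore
\begin{equation*}
u(x) \;=\; {}_a\mathbb{D}_x^{-\mu,\lambda}\,{}_a\mathbb{D}_x^{\mu,\lambda} u(x) \qquad \text{on } \Omega.
\end{equation*}
Taking $L^2(\Omega)$-norms on both sides and invoking the convolution bound (\ref{lefttermpe1}) applied to the function $v={}_a\mathbb{D}_x^{\mu,\lambda} u \in L^2(\Omega)$ (which lies in $L^2$ by Theorem \ref{theorem11}), I obtain
\begin{equation*}
\|u\|_{L^2(\Omega)}
\;=\; \bigl\|{}_a\mathbb{D}_x^{-\mu,\lambda}\,({}_a\mathbb{D}_x^{\mu,\lambda} u)\bigr\|_{L^2(\Omega)}
\;\le\; \frac{(b-a)^{\mu}}{\Gamma(\mu+1)} \,\bigl\|{}_a\mathbb{D}_x^{\mu,\lambda} u\bigr\|_{L^2(\Omega)},
\end{equation*}
which is the desired inequality on $C_0^{\infty}(\Omega)$ with an explicit constant.

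Next I would pass to $u \in H_0^{\mu}(\Omega)$ by density. Choose a sequence $u_n \in C_0^{\infty}(\Omega)$ with $u_n \to u$ in $H_0^{\mu}(\Omega)$. Theorem \ref{theorem11} ensures that ${}_a\mathbb{D}_x^{\mu,\lambda} u_n \to {}_a\mathbb{D}_x^{\mu,\lambda} u$ in $L^2(\Omega)$, while the continuous embedding $H_0^{\mu}(\Omega) \hookrightarrow L^2(\Omega)$ gives $u_n \to u$ in $L^2(\Omega)$. Passing to the limit in the inequality established for each $u_n$ yields
\begin{equation*}
\|u\|_{L^2(\Omega)} \;\stackrel{<}{\sim}\; \bigl\|{}_a\mathbb{D}_x^{\mu,\lambda} u\bigr\|_{L^2(\Omega)}.
\end{equation*}
The proof of the companion estimate for ${}_x\mathbb{D}_b^{\mu,\lambda}$ is completely parallel, using the right-sided half of Property \ref{propertyeq1} together with (\ref{righttermpe2}).

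The only real subtlety, and what I would watch carefully, is to avoid a circular dependence on Theorem \ref{theorem2}: that theorem already invokes Lemma \ref{lemmaeq4} in the range $\mu\in(\tfrac12,1]$, so the argument above must not appeal to (\ref{eeeddd}). The route through Property \ref{propertyeq1} and the Young-type bound (\ref{lefttermpe1})--(\ref{righttermpe2}) is self-contained and avoids this loop. A minor bookkeeping point is that Property \ref{propertyeq1} was stated under $C^{n-1}$-regularity with vanishing derivatives at the endpoint, which is more than satisfied on $C_0^{\infty}(\Omega)$; all that the density step requires is continuity of the operators appearing on either side of the inequality, which has already been guaranteed.
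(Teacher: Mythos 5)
Your proposal is correct and follows essentially the same route as the paper: both invert ${}_a\mathbb{D}_x^{\mu,\lambda}$ via Property \ref{propertyeq1}, apply the Young-type bounds (\ref{lefttermpe1})--(\ref{righttermpe2}), and use density of $C_0^{\infty}(\Omega)$ together with Theorem \ref{theorem11}. The only cosmetic difference is that the paper first extends the identity ${}_a\mathbb{D}_x^{-\mu,\lambda}\,{}_a\mathbb{D}_x^{\mu,\lambda}u=u$ to all of $H_0^{\mu}(\Omega)$ by a limiting argument and then applies the bound once, whereas you prove the inequality on $C_0^{\infty}(\Omega)$ and pass to the limit in the inequality; your remark about avoiding circularity with Theorem \ref{theorem2} is also consistent with how the paper orders its arguments.
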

\begin{proof}
For $u\in H_0^{\mu}(\Omega)$, by Theorem \ref{theorem11}, one has ${}_a\mathbb{D}_x^{\mu,\lambda}u \in L^2(\Omega)$ and ${}_x\mathbb{D}_b^{\mu,\lambda}u\in L^2(\Omega)$.
Since $H^{\mu}_0(\Omega)$ is the closure of $C_0^{\infty}(\Omega)$ w.r.t. $\|\cdot\|_{H^\mu(\Omega)}$,  there exists  a sequence $u_n\in C_0^{\infty}(\Omega)$ such that $\lim\limits_{n\to \infty}u_n=u$, and using (\ref{propertyeq3}), it follows that
\begin{eqnarray}\label{Lemma4eq1}
{}_a\mathbb{D}_x^{-\mu,\lambda}\,{}_a\mathbb{D}_x^{\mu,\lambda}u_n={}_x\mathbb{D}_b^{-\mu,\lambda}\,{}_x\mathbb{D}_b^{\mu,\lambda}u_n=u_n.
\end{eqnarray}
Then combining (\ref{Lemma4eq1}), (\ref{lefttermpe1}) and (\ref{normeq1}), one has
\begin{eqnarray}\label{eeetqq}
&&~\left\|{}_a\mathbb{D}_x^{-\mu,\lambda}\,{}_a\mathbb{D}_x^{\mu,\lambda}u-u\right\|_{L^2(\Omega)}
\le\left\|{}_a\mathbb{D}_x^{-\mu,\lambda}\,{}_a\mathbb{D}_x^{\mu,\lambda}\left(u-u_n\right)\right\|_{L^2(\Omega)}
+\left\|u_n-u\right\|_{L^2(\Omega)}\nonumber\\
&&\stackrel{<}{\sim}\left\|{}_a\mathbb{D}_x^{\mu,\lambda}\left(u-u_n\right)\right\|_{L^2(\Omega)}+\left\|u_n-u\right\|_{L^2(\Omega)}
\stackrel{<}{\sim}\left\|u-u_n\right\|_{H^\mu(\Omega)}+\left\|u-u_n\right\|_{L^2(\Omega)}\to 0.\nonumber
\end{eqnarray}
The proof for the right tempered case is similar. Then, using (\ref{lefttermpe1}) and (\ref{righttermpe2}), it yields
\begin{eqnarray*}
&&\left\|u\right\|_{L^2(\Omega)}=\left\|{}_a\mathbb{D}_x^{-\mu,\lambda}\,{}_a\mathbb{D}_x^{\mu,\lambda}u\right\|_{L^2(\Omega)}
\stackrel{<}{\sim}\left\|{}_{a}\mathbb{D}_x^{\mu,\lambda}{u}\right\|_{L^2(\Omega)},\\
&&\left\|u\right\|_{L^2(\Omega)}=\left\|{}_x\mathbb{D}_b^{-\mu,\lambda}\,{}_x\mathbb{D}_b^{\mu,\lambda}u\right\|_{L^2(\Omega)}
\stackrel{<}{\sim}\left\|{}_{x}\mathbb{D}_b^{\mu,\lambda}{u}\right\|_{L^2(\Omega)},
\end{eqnarray*}
and one ends the proof.
\end{proof}
\begin{corollary}\label{corollaryeq35}
 If $u\in H_0^{\mu}(\Omega)$ with $\mu\in (0,1],\mu\not=\frac{1}{2}$,  by Lemma \ref{lemmaeq3}, for all $\lambda\ge 0$, one has
\begin{eqnarray}
\left|u\right|^2_{H^\mu(\Omega)}\sim\left\|u\right\|^2_{H^\mu(\Omega)}\sim \left\|{}_{a}\mathbb{D}_x^{\mu,\lambda}{u}\right\|^2_{L^2(\Omega)}\sim \left\|{}_{x}\mathbb{D}_b^{\mu,\lambda}{u}\right\|^2_{L^2(\Omega)};
\end{eqnarray}
and if $0<\mu_1<\mu$ and $\mu_1\not=\frac{1}{2}$, it holds that
\begin{equation}
\left\|{}_{a}\mathbb{D}_x^{\mu_1,\lambda}{u}\right\|^2_{L^2(\Omega)} \stackrel{<}{\sim} \left\|{}_{a}\mathbb{D}_x^{\mu,\lambda}{u}\right\|^2_{L^2(\Omega)}
{~\rm and~}\left\|{}_{x}\mathbb{D}_b^{\mu_1,\lambda}{u}\right\|^2_{L^2(\Omega)} \stackrel{<}{\sim} \left\|{}_{x}\mathbb{D}_b^{\mu,\lambda}{u}\right\|^2_{L^2(\Omega)}.
\end{equation}
And all the conclusions  apply to  $u\in H^{\mu}(\Omega)$ directly for $0<\mu<\frac{1}{2}$
 (in fact,  $H^{\mu}(\Omega)=H_0^{\mu}(\Omega)$ for $0\le \mu\le\frac{1}{2}$; see, e.g., \cite[Chapter 33]{Tartar:07}).
\end{corollary}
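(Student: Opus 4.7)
The plan is to assemble the corollary from Lemma \ref{lemmaeq3}, Theorem \ref{theorem11}, and Theorem \ref{theorem2} with almost no new computation; essentially one simply upgrades the equivalences already proved to the seminorm and then combines them with the Poincar\'e--Friedrichs inequality to obtain monotonicity in the order. The core of the first statement, namely
$\|u\|^2_{H^\mu(\Omega)} \sim \|{}_{a}\mathbb{D}_x^{\mu,\lambda}u\|^2_{L^2(\Omega)} \sim \|{}_{x}\mathbb{D}_b^{\mu,\lambda}u\|^2_{L^2(\Omega)}$,
is exactly (\ref{eeeddd}) in Theorem \ref{theorem2}, so only the equivalence of the seminorm with the full norm needs argument. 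One direction is immediate since $|u|_{H^\mu(\Omega)} \le \|u\|_{H^\mu(\Omega)}$ by definition; for the other, Lemma \ref{lemmaeq3} gives $\|u\|_{L^2(\Omega)} \stackrel{<}{\sim} |u|_{H^\mu_0(\Omega)}$ whenever $\mu \neq \tfrac{1}{2}$, hence $\|u\|^2_{H^\mu(\Omega)} = \|u\|^2_{L^2(\Omega)} + |u|^2_{H^\mu(\Omega)} \stackrel{<}{\sim} |u|^2_{H^\mu(\Omega)}$.

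For the second assertion, let $0 < \mu_1 < \mu$ with $\mu_1 \neq \tfrac12$. Since $H_0^\mu(\Omega) \subset H_0^{\mu_1}(\Omega)$ (the zero extension of $u$ is automatically in $H^{\mu_1}(\mathbb{R})$ once it belongs to $H^\mu(\mathbb{R})$), Theorem \ref{theorem11} applies at order $\mu_1$ to yield $\|{}_{a}\mathbb{D}_x^{\mu_1,\lambda}u\|_{L^2(\Omega)} \stackrel{<}{\sim} \|u\|_{H^{\mu_1}(\Omega)}$. Invoking the first part of the corollary at the order $\mu_1$ gives $\|u\|_{H^{\mu_1}(\Omega)} \sim |u|_{H^{\mu_1}(\Omega)}$, and then the seminorm embedding half of Lemma \ref{lemmaeq3} supplies $|u|_{H^{\mu_1}(\Omega)} \stackrel{<}{\sim} |u|_{H^{\mu}(\Omega)}$. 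Combining with the equivalence $|u|_{H^\mu(\Omega)} \sim \|{}_{a}\mathbb{D}_x^{\mu,\lambda}u\|_{L^2(\Omega)}$ just established closes the chain. The right-sided statement is identical after replacing ${}_{a}\mathbb{D}_x$ by ${}_{x}\mathbb{D}_b$ throughout.

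The parenthetical claim that the conclusions extend to $u \in H^\mu(\Omega)$ for $0 < \mu < \tfrac{1}{2}$ is a direct consequence of the standard identification $H^\mu(\Omega) = H_0^\mu(\Omega)$ in that range, cited in the remark after the statement (Tartar). Since no part of the argument above actually used compact support inside $\Omega$ — only membership in $H_0^\mu(\Omega)$ — the extension is immediate.

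No step is truly delicate; the only place one must watch is the invocation of Lemma \ref{lemmaeq3} and Theorem \ref{theorem11} at the lower order $\mu_1$, which is why the excluded value $\mu_1 = \tfrac{1}{2}$ appears in the hypotheses. Everything else is a bookkeeping exercise that chains together the earlier norm equivalences.
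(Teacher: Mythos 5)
Your proof is correct and follows essentially the route the paper intends: the paper gives no separate argument for this corollary beyond the pointer to Lemma \ref{lemmaeq3}, and your chaining of Theorem \ref{theorem2} (for the norm equivalences at order $\mu$), Lemma \ref{lemmaeq3} (for the seminorm/norm equivalence and the seminorm monotonicity), and Theorem \ref{theorem11} at the lower order $\mu_1$ is exactly the intended bookkeeping. The only remark is that the middle step of your second chain can be shortened — $\left\|u\right\|_{H^{\mu_1}(\Omega)}\stackrel{<}{\sim}\left\|u\right\|_{H^{\mu}(\Omega)}$ follows directly from $\left|u\right|_{H^{\mu_1}(\Omega)}\stackrel{<}{\sim}\left|u\right|_{H^{\mu}(\Omega)}$ without invoking the seminorm equivalence at order $\mu_1$ — but this is cosmetic.
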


 When developing the method of operator splitting for space fractional  problems \cite{Deng:13,Qiu:15,Xu:14} or carrying on the  theory analysis involving time fractional derivatives, one also needs the variational properties of fractional integrals.
Note that the tempered Caputo fractional derivative always has the form
 ${}_a^C\mathbb{D}_t^{\mu,\lambda}u(x)={}_a\mathbb{D}_x^{-(n-\mu),\lambda}\,{\rm D}_x^n u(x)\,\,(n-1<\mu <n, n\in \mathbb{N^+})$;
 and for $1<\alpha<2$ with $u(a)=0$,  similar to the standard  Riemann-Liouville derivative (see (\cite{Deng:13,Qiu:15}),
  the tempered Riemann-Liouville derivatives have the splitting forms
\begin{eqnarray*}
&&{}_a D_x^{\alpha,\lambda}u(x)=\left(\lambda+\frac{d}{dx}\right){}_a\mathbb{D}_x^{-(2-\alpha),\lambda}\left(\lambda+\frac{d}{dx}\right)u(x)
-\alpha\lambda^{\alpha-1}\left(\lambda+\frac{d}{dx}\right)u(x)+\lambda^\alpha(\alpha-1)u(x),
\end{eqnarray*}
and
\begin{eqnarray*}
&&{}_x D_b^{\alpha,\lambda}u(x)=\left(\lambda-\frac{d}{dx}\right){}_x\mathbb{D}_b^{-(2-\alpha),\lambda}\left(\lambda-\frac{d}{dx}\right)u(x)
-\alpha\lambda^{\alpha-1}\left(\lambda-\frac{d}{dx}\right)u(x)+\lambda^\alpha(\alpha-1)u(x),
\end{eqnarray*}
respectively, where the properties (\ref{equationee1}) and (\ref{equationee2}) are used. Therefore, we will limit our discussions for the tempered fractional integrals to the case: $0<\mu<1$.


\begin{theorem} \label{Theoremeq23}
Let the real function $u\in L^2(\Omega)$ and $\mu\in (0,1)$. Then
\begin{eqnarray}
\left\|{}_a\mathbb{D}_x^{-\frac{\mu}{2},\lambda}u\right\|^2_{L^2(\Omega)}\sim\left({}_a\mathbb{D}_x^{-\mu,\lambda}u,u\right)=\left(u, {}_x\mathbb{D}_b^{-\mu,\lambda}u\right)
\sim \left\|{}_x\mathbb{D}_b^{-\frac{\mu}{2},\lambda}u\right\|^2_{L^2(\Omega)}.
\end{eqnarray}
\end{theorem}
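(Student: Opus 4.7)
The middle identity $({}_a\mathbb{D}_x^{-\mu,\lambda}u,u) = (u, {}_x\mathbb{D}_b^{-\mu,\lambda}u)$ is immediate from the adjoint property (\ref{adjonteq}), so the work reduces to establishing the two norm equivalences bracketing it. My plan is to transport the bilinear form to the whole line via the zero extension $\tilde u$ of $u$, evaluate it exactly via Property \ref{propertyeq2} and Parseval, and then recover the $L^2(\Omega)$ norms through a combination of domain restriction and Cauchy--Schwarz.

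First, since $\tilde u$ vanishes off $\Omega$, one gets $({}_a\mathbb{D}_x^{-\mu,\lambda}u,u)_{L^2(\Omega)} = ({}_{-\infty}\mathbb{D}_x^{-\mu,\lambda}\tilde u,\tilde u)_{L^2(\mathbb{R})}$. By Property \ref{propertyeq2} and Parseval, this equals $\frac{1}{2\pi}\int_{\mathbb{R}}(\lambda+i\omega)^{-\mu}|\mathscr{F}[\tilde u](\omega)|^2 d\omega$, and taking the real part (the left side is real), with $\theta=\arctan(|\omega|/\lambda)$ (or $\pi/2$ when $\lambda=0$) exactly as in the proof of Theorem \ref{theorem2}, produces
\begin{equation*}
({}_a\mathbb{D}_x^{-\mu,\lambda}u,u)_{L^2(\Omega)} = \frac{1}{2\pi}\int_{\mathbb{R}}(\lambda^2+\omega^2)^{-\mu/2}\cos(\mu\theta)\,|\mathscr{F}[\tilde u](\omega)|^2\, d\omega.
\end{equation*}
Because $\mu\in(0,1)$ forces $\mu\theta\in[0,\mu\pi/2]\subset[0,\pi/2)$, the weight $\cos(\mu\theta)$ is trapped between $\cos(\mu\pi/2)>0$ and $1$. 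Hence this integral is comparable to $\|{}_{-\infty}\mathbb{D}_x^{-\mu/2,\lambda}\tilde u\|^2_{L^2(\mathbb R)}=\frac{1}{2\pi}\int(\lambda^2+\omega^2)^{-\mu/2}|\mathscr{F}[\tilde u]|^2 d\omega$, and by the analogous Fourier identity also to $\|{}_x\mathbb{D}_\infty^{-\mu/2,\lambda}\tilde u\|^2_{L^2(\mathbb R)}$.

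Next observe that ${}_{-\infty}\mathbb{D}_x^{-\mu/2,\lambda}\tilde u$ agrees with ${}_a\mathbb{D}_x^{-\mu/2,\lambda}u$ on $\Omega$, so merely discarding the contribution from $(b,\infty)$ gives $\|{}_a\mathbb{D}_x^{-\mu/2,\lambda}u\|^2_{L^2(\Omega)}\le\|{}_{-\infty}\mathbb{D}_x^{-\mu/2,\lambda}\tilde u\|^2_{L^2(\mathbb R)}$, with the symmetric statement holding for the right tempered integral. Combined with the Fourier-level equivalence from the previous paragraph, this yields the one-sided bounds $\|{}_a\mathbb{D}_x^{-\mu/2,\lambda}u\|^2_{L^2(\Omega)}\stackrel{<}{\sim}({}_a\mathbb{D}_x^{-\mu,\lambda}u,u)$ and $\|{}_x\mathbb{D}_b^{-\mu/2,\lambda}u\|^2_{L^2(\Omega)}\stackrel{<}{\sim}({}_a\mathbb{D}_x^{-\mu,\lambda}u,u)$.

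The reverse direction is the main obstacle: the Fourier integral sees all of $\mathbb R$, whereas the half-index $L^2(\Omega)$ norms only see $\Omega$, so one cannot bound the Fourier expression from above by the restricted norm directly. I sidestep Fourier here by combining the semigroup identity (\ref{semigroup}) with the adjoint property (\ref{adjonteq}) to write $({}_a\mathbb{D}_x^{-\mu,\lambda}u,u)=({}_a\mathbb{D}_x^{-\mu/2,\lambda}u,{}_x\mathbb{D}_b^{-\mu/2,\lambda}u)$, so that Cauchy--Schwarz gives $({}_a\mathbb{D}_x^{-\mu,\lambda}u,u)\le\|{}_a\mathbb{D}_x^{-\mu/2,\lambda}u\|_{L^2(\Omega)}\|{}_x\mathbb{D}_b^{-\mu/2,\lambda}u\|_{L^2(\Omega)}$. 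Writing $A=\|{}_a\mathbb{D}_x^{-\mu/2,\lambda}u\|^2_{L^2(\Omega)}$ and $B=\|{}_x\mathbb{D}_b^{-\mu/2,\lambda}u\|^2_{L^2(\Omega)}$, the two lower bounds from the previous paragraph paired with this upper bound give $A\stackrel{<}{\sim}\sqrt{AB}$ and $B\stackrel{<}{\sim}\sqrt{AB}$, which squares to $A\sim B$, so $({}_a\mathbb{D}_x^{-\mu,\lambda}u,u)\sim A\sim B$, exactly the desired chain.
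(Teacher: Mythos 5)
Your proposal is correct and follows essentially the same route as the paper: zero extension to $\mathbb{R}$, the Fourier representation with the weight $\cos(\mu\theta)$ bounded below by $\cos(\pi\mu/2)>0$ to get the lower bounds for both half-index norms after restricting back to $\Omega$, and then the semigroup-plus-adjoint identity $\left({}_a\mathbb{D}_x^{-\mu,\lambda}u,u\right)=\left({}_a\mathbb{D}_x^{-\frac{\mu}{2},\lambda}u,\,{}_x\mathbb{D}_b^{-\frac{\mu}{2},\lambda}u\right)$ with Cauchy--Schwarz for the upper bound. The closing $A\stackrel{<}{\sim}\sqrt{AB}$, $B\stackrel{<}{\sim}\sqrt{AB}$ bookkeeping is just a more explicit rendering of the paper's final combination step.
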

\begin{proof}
First, let $\tilde{u}$ be the zero extension of $u$ from $\Omega$ to $\mathbb{R}$; using Property \ref{propertyeq2} and the Plancherel theorem, one has
\begin{eqnarray}\label{integralnorm1}
&&\left(u, {}_x\mathbb{D}_b^{-\mu,\lambda}u\right)=\frac{1}{2\pi}\int_{\mathbb{R}}\mathscr{F}[\tilde{u}](\omega)
\overline{\left(\lambda-i\omega\right)^{-\mu}\mathscr{F}[\tilde{u}](\omega)} d\omega\\
&&=\frac{1}{2\pi}\int_{\mathbb{R}}\cos(\mu \theta)\left|\lambda-i\omega\right|^{-\mu}\mathscr{F}[\tilde{u}](\omega)\overline{\mathscr{F}[\tilde{u}](\omega)}d\omega\nonumber\\
&&\ge\frac{\cos(\frac{\pi\mu}{2})}{2\pi}\int_{\mathbb{R}}\left|\mathscr{F}[{}_x\mathbb{D}_{\infty}^{-\frac{\mu}{2},\lambda}\tilde{u}](\omega)\right|^2d\omega\nonumber\\
&&\ge\frac{\cos(\frac{\pi\mu}{2})}{2\pi}\int_{\Omega}\left|{}_x\mathbb{D}_{b}^{-\frac{\mu}{2},\lambda}u\right|^2dx
=\frac{\cos(\frac{\pi\mu}{2})}{2\pi}\left\|{}_x\mathbb{D}_b^{-\frac{\mu}{2},\lambda}u\right\|^2_{L^2(\Omega)},\nonumber
\end{eqnarray}
where $\theta$ is given in (\ref{thetaeq}) and in the second step, $\overline{\mathscr{F}(\tilde{u})(-\omega)}=\mathscr{F}(\tilde{u})(\omega)$ is used.
Since $\left|\lambda-i\omega\right|^{-\mu}=\left|\lambda+i\omega\right|^{-\mu}$, starting from the second step of (\ref{integralnorm1}), one also has
\begin{eqnarray}\label{integralnorm2}
&&\left(u,{}_x\mathbb{D}_b^{-\mu,\lambda}u\right)=\frac{1}{2\pi}\int_{\mathbb{R}}\cos(\mu \theta)\left|\lambda+i\omega\right|^{-\mu}\mathscr{F}[\tilde{u}](\omega)\overline{\mathscr{F}[\tilde{u}](\omega)}d\omega\nonumber\\
&&\ge\frac{\cos(\frac{\pi\mu}{2})}{2\pi}\int_{\mathbb{R}}\left|\mathscr{F}[{}_{-\infty}\mathbb{D}_{x}^{-\frac{\mu}{2},\lambda}\tilde{u}](\omega)\right|^2d\omega\\
&&\ge\frac{\cos(\frac{\pi\mu}{2})}{2\pi}\int_{\Omega}\left|{}_a\mathbb{D}_{x}^{-\frac{\mu}{2},\lambda}u\right|^2dx
=\frac{\cos(\frac{\pi\mu}{2})}{2\pi}\left\|{}_a\mathbb{D}_x^{-\frac{\mu}{2},\lambda}u\right\|^2_{L^2(\Omega)}.\nonumber
\end{eqnarray}
From  (\ref{semigroup}) and (\ref{adjonteq}), it follows that
\begin{eqnarray}\label{integralnorm3}
\left(u, {}_x\mathbb{D}_b^{-\mu,\lambda}u\right)=\left(u,\, {}_x\mathbb{D}_b^{-\frac{\mu}{2},\lambda}{}_x\mathbb{D}_b^{-\frac{\mu}{2},\lambda}u\right)
=\left({}_a\mathbb{D}_x^{-\frac{\mu}{2},\lambda}u,\, {}_x\mathbb{D}_b^{-\frac{\mu}{2},\lambda}u\right).
\end{eqnarray}
Note that
\begin{eqnarray} \label{integralnorm4}
\left({}_a\mathbb{D}_x^{-\frac{\mu}{2},\lambda}u,\, {}_x\mathbb{D}_b^{-\frac{\mu}{2},\lambda}u\right)
\le\left\|{}_a\mathbb{D}_x^{-\frac{\mu}{2},\lambda}u\right\|_{L^2(\Omega)}\left\|{}_x\mathbb{D}_b^{-\frac{\mu}{2},\lambda}u\right\|_{L^2(\Omega)}.
\end{eqnarray}
Then combining (\ref{integralnorm1})--(\ref{integralnorm4}), one obtains
\begin{equation}
\left\|{}_a\mathbb{D}_x^{-\frac{\mu}{2},\lambda}u\right\|^2_{L^2(\Omega)}\sim\left\|{}_x\mathbb{D}_b^{-\frac{\mu}{2},\lambda}u\right\|^2_{L^2(\Omega)}
\sim\left({}_a\mathbb{D}_x^{-\frac{\mu}{2}}u,\, {}_x\mathbb{D}_b^{-\frac{\mu}{2},\lambda}u\right).
\end{equation}
The proof is completed.
\end{proof}

\begin{theorem} [Embeddedness] \label{fractionalembongin}
 For $0\le \mu_1<\mu_2$, and  $u\in L^2(\Omega)$, it follows that
 \begin{eqnarray}
 &&\left\|{}_a\mathbb{D}_x^{-{\mu_2},\lambda}u\right\|_{L^2(\Omega)}\stackrel{<}{\sim}\left\|{}_a\mathbb{D}_x^{-{\mu_1},\lambda}u\right\|_{L^2(\Omega)},\label{embodingeq1}\\
 &&\left\|{}_x\mathbb{D}_b^{-{\mu_2},\lambda}u\right\|_{L^2(\Omega)}\stackrel{<}{\sim}\left\|{}_x\mathbb{D}_b^{-{\mu_1},\lambda}u\right\|_{L^2(\Omega)}.\label{embodingeq2}
 \end{eqnarray}
 \end{theorem}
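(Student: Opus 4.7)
The plan is to derive both inequalities as an immediate consequence of the semigroup property (\ref{semigroup}) of the tempered fractional integrals combined with the $L^2$-boundedness estimates (\ref{lefttermpe1}) and (\ref{righttermpe2}) that were already established just above Lemma~\ref{lemmaeq4}. No Fourier analysis or variational machinery is needed, since everything reduces to a Young-type convolution bound.

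For the left-sided case, I would set $\nu := \mu_2 - \mu_1 > 0$ and use the composition rule (\ref{semigroup}) to write
\begin{equation*}
{}_a\mathbb{D}_x^{-\mu_2,\lambda} u(x) = {}_a\mathbb{D}_x^{-\nu,\lambda}\bigl({}_a\mathbb{D}_x^{-\mu_1,\lambda} u\bigr)(x).
\end{equation*}
Since $u \in L^2(\Omega)$, estimate (\ref{lefttermpe1}) guarantees that ${}_a\mathbb{D}_x^{-\mu_1,\lambda} u \in L^2(\Omega)$ as well, so that applying (\ref{lefttermpe1}) a second time, with exponent $\nu$ and input ${}_a\mathbb{D}_x^{-\mu_1,\lambda}u$, yields
\begin{equation*}
\bigl\|{}_a\mathbb{D}_x^{-\mu_2,\lambda} u\bigr\|_{L^2(\Omega)} \le \frac{(b-a)^{\nu}}{\Gamma(\nu+1)}\,\bigl\|{}_a\mathbb{D}_x^{-\mu_1,\lambda} u\bigr\|_{L^2(\Omega)},
\end{equation*}
which is exactly (\ref{embodingeq1}) with an explicit constant depending only on $\mu_2-\mu_1$ and the length of $\Omega$. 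The case $\mu_1 = 0$ is included by interpreting ${}_a\mathbb{D}_x^{0,\lambda}u = u$, consistent with (\ref{lefttermpe1}).

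For the right-sided inequality (\ref{embodingeq2}), the argument is symmetric: apply the right-sided semigroup identity from (\ref{semigroup}) to split ${}_x\mathbb{D}_b^{-\mu_2,\lambda} u$ into ${}_x\mathbb{D}_b^{-\nu,\lambda}$ acting on ${}_x\mathbb{D}_b^{-\mu_1,\lambda} u$, and then invoke (\ref{righttermpe2}) in place of (\ref{lefttermpe1}).

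I do not foresee any genuine obstacle. The only thing to be mindful of is to verify that the intermediate function ${}_a\mathbb{D}_x^{-\mu_1,\lambda}u$ indeed lies in $L^2(\Omega)$ so that the semigroup identity and the second application of the bound are legitimate; this is handled by the very first invocation of (\ref{lefttermpe1}) (respectively (\ref{righttermpe2})). The resulting hidden constant is $(b-a)^{\mu_2-\mu_1}/\Gamma(\mu_2-\mu_1+1)$, independent of $\lambda$ (in fact, the exponential tempering factor $e^{-\lambda(x-\xi)}\le 1$ was already absorbed into the $L^1$-norm of $e^{-\lambda x}x^{\mu-1}$ in the derivation of (\ref{lefttermpe1})), so the embedding is uniform in the tempering parameter.
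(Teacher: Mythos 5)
Your proposal is correct and follows essentially the same route as the paper: the paper also splits ${}_a\mathbb{D}_x^{-\mu_2,\lambda}u={}_a\mathbb{D}_x^{-(\mu_2-\mu_1),\lambda}{}_a\mathbb{D}_x^{-\mu_1,\lambda}u$ via the composition rule and then applies the Young-inequality bound with exponent $\mu_2-\mu_1$, obtaining the same constant $(b-a)^{\mu_2-\mu_1}/\Gamma(\mu_2-\mu_1+1)$. Your additional remarks (checking that the intermediate function lies in $L^2(\Omega)$, handling $\mu_1=0$, and noting uniformity in $\lambda$) are sound refinements of the same argument.
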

 \begin{proof}
 Since
 \begin{eqnarray*}
 &&{}_a\mathbb{D}_x^{-{\mu_2},\lambda}u={}_a\mathbb{D}_x^{-({\mu_2-\mu_1}),\lambda}{}_a\mathbb{D}_x^{-{\mu_1},\lambda}u,\\
  &&{}_x\mathbb{D}_b^{-{\mu_2},\lambda}u={}_x\mathbb{D}_b^{-({\mu_2-\mu_1}),\lambda}{}_x\mathbb{D}_b^{-{\mu_1},\lambda}u.
 \end{eqnarray*}
 Similar to (\ref{lefttermpe1}) and (\ref{righttermpe2}), one has
 \begin{eqnarray*}
 &&\left\|{}_a\mathbb{D}_x^{-{\mu_2},\lambda}u\right\|_{L^2(\Omega)}\le \frac{(b-a)^{\mu_2-\mu_1}}{\Gamma(\mu_2-\mu_1+1)}\left\|{}_a\mathbb{D}_x^{-{\mu_1},\lambda}u\right\|_{L^2(\Omega)},\\
  &&\left\|{}_x\mathbb{D}_b^{-{\mu_2},\lambda}u\right\|_{L^2(\Omega)}\le \frac{(b-a)^{\mu_2-\mu_1}}{\Gamma(\mu_2-\mu_1+1)}\left\|{}_x\mathbb{D}_b^{-{\mu_1},\lambda}u\right\|_{L^2(\Omega)}.
\end{eqnarray*}
 Then we complete the proof.
 \end{proof}
\section {Numerical analysis and implementation for the space tempered fractional stationary equation and time tempered fractional equation
 }
 Now, we apply the above provided theoretical framework to solve the models involving the tempered fractional calculus. The strict numerical analysis and efficient implementation are detailedly discussed.
  First, we consider the space tempered fractional equation.
\subsection {Space tempered fractional advection dispersion model}
For the convenience of presentation, we discuss a simple space tempered fractional stationary model, but it can be easily extended to the corresponding time evolution or high  dimension  problem \cite{Baeumera:10,Cartea:07} and \cite{Deng:08,Zhaoyi:15}. The model is given by
\begin{eqnarray}\label{model1}
-(1-p)\,{}_a D_x^{\alpha,\lambda}u(x)-p\,{}_x D_b^{\alpha,\lambda}u(x)+m(x)u^{\prime}+c(x)u=f(x)
\end{eqnarray}
with $u(a)=u(b)=0$  and $u^{\prime}=\frac{du}{dx}$, where $1<\alpha\le2$, $ 0\le p\le1$, $m(x)\in C^1(\overline{\Omega})$, $ c(x)\in C(\overline{\Omega})$, and $c(x)-\frac{1}{2}m^{\prime}>0$.
\subsubsection{Model analysis}
Now consider the Galerkin weak formulation of model (\ref{model1}). For $f\in H^{-\frac{\alpha}{2}}(\Omega)$, find $u\in H^{\mu}_0(\Omega)$ such that
\begin{eqnarray}\label{weakform}
A\left(u,v\right)=\left\langle f,v\right\rangle \quad\forall v\in H_0^{\frac{\alpha}{2}}(\Omega),
\end{eqnarray}
where
\begin{eqnarray}
&&A\left(u,v\right)=-(1-p)\left({}_{a}\mathbb{D}_x^{\frac{\alpha}{2},\lambda}{u},\,{}_{x}\mathbb{D}_b^{\frac{\alpha}{2},\lambda}{v}\right)
-p\left({}_{x}\mathbb{D}_b^{\frac{\alpha}{2},\lambda}{u},\,{}_{a}\mathbb{D}_x^{\frac{\alpha}{2},\lambda}{v}\right)+\left(1-p\right)\lambda^{\alpha}\left(u,\,v\right)\\
&&~+p\lambda^{\alpha}\left(u,\,v\right)
+\left(\alpha(1-2p)\lambda^{\alpha-1}\right)\left({}_{a}\mathbb{D}_x^{\frac{\alpha}{2},0}{u},\,{}_{x}\mathbb{D}_b^{1-\frac{\alpha}{2},0}{v}\right)
+\left({}_{a}\mathbb{D}_x^{\frac{\alpha}{2},0}{u},\,{}_{x}\mathbb{D}_b^{1-\frac{\alpha}{2},0}({mv})\right)+\left(cu,v\right). \nonumber
\end{eqnarray}

In fact, assuming that $u$ is smooth enough and $v\in C_0^\infty(\Omega)$, one has
\begin{eqnarray}\label{derivatiequ1}
&&\big({}_{a}\mathbb{D}_x^{\alpha,\lambda}{u},v\big)=\big({}_aD_x^{\alpha}(e^{\lambda x}u),\,e^{-\lambda x}v\big)\\
&&=-\left({}_aD_x^{-(2-\alpha)}\frac{d}{dx}(e^{\lambda x}u),\,\frac{d}{dx}\left(e^{-\lambda x}v\right)\right)\nonumber\\
&&=-\left({}_aD_x^{-(1-\frac{\alpha}{2})}\frac{d}{dx}(e^{\lambda x}u),\,{}_xD_b^{-(1-\frac{\alpha}{2})}\frac{d}{dx}\left(e^{-\lambda x}v\right)\right)\nonumber\\
&&=\left({}_aD_x^{\frac{\alpha}{2}}(e^{\lambda x}u),\,{}_xD_b^{\frac{\alpha}{2}}\left(e^{-\lambda x}v\right)\right)=\left({}_{a}\mathbb{D}_x^{\frac{\alpha}{2},\lambda}{u},\,{}_{x}\mathbb{D}_b^{\frac{\alpha}{2},\lambda}{v}\right).\nonumber
\end{eqnarray}
The derivation process of $\left({}_{x}\mathbb{D}_b^{\frac{\alpha}{2},\lambda}{u},\,{}_{a}\mathbb{D}_x^{\mu,\lambda}{v}\right)$ is similar. The results involving the first derivative can be obtained by  using the following Lemma  with $\lambda=0$, $\gamma=1$, and $q=\frac{\alpha}{2}$.

\begin{lemma} \label{section4lemma3}
For all $ \lambda\ge 0$, $0\le q<\gamma\le1$, let $u\in C^1(\Omega)$, $u(a)=0$, and $v\in C_0^{\infty}(\Omega)$. Then
\begin{eqnarray}
\left({}_{a}\mathbb{D}_x^{\gamma,\lambda}{u}, v\right)=\left({}_{a}\mathbb{D}_x^{q,\lambda}{u},{}_{x}\mathbb{D}_b^{\gamma-q,\lambda}{v}\right).
\end{eqnarray}
\end{lemma}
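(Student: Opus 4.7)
The natural first move is to peel off the exponential tempering. Setting $w(x)=e^{\lambda x}u(x)$ and $\phi(x)=e^{-\lambda x}v(x)$, the hypotheses translate to $w\in C^1(\Omega)$ with $w(a)=0$ and $\phi\in C_0^{\infty}(\Omega)$. By the defining relations ${}_a\mathbb{D}_x^{\mu,\lambda}u=e^{-\lambda x}{}_aD_x^{\mu}(e^{\lambda x}u)$ and ${}_x\mathbb{D}_b^{\mu,\lambda}v=e^{\lambda x}{}_xD_b^{\mu}(e^{-\lambda x}v)$, the weights $e^{\pm\lambda x}$ cancel inside the $L^2$ pairing, so the claim reduces to the pure Riemann--Liouville identity
\[
\bigl({}_aD_x^{\gamma}w,\phi\bigr)=\bigl({}_aD_x^{q}w,\,{}_xD_b^{\gamma-q}\phi\bigr).
\]

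Next I would exploit $w(a)=0$ to rewrite the left factor on each side as a fractional integral of $w'$. For any $\mu\in[0,1]$, the vanishing boundary value gives ${}_aD_x^{\mu}w={}_aD_x^{-(1-\mu)}w'$: for $\mu\in(0,1)$ this is the coincidence of the Riemann--Liouville and Caputo derivatives under $w(a)=0$, while $\mu=0$ reduces to the fundamental theorem of calculus ($\int_a^x w'=w$) and $\mu=1$ is trivial. The target becomes
\[
\bigl({}_aD_x^{-(1-\gamma)}w',\phi\bigr)=\bigl({}_aD_x^{-(1-q)}w',\,{}_xD_b^{\gamma-q}\phi\bigr).
\]
The adjoint property (the $\lambda=0$ specialization of (\ref{adjonteq})) moves the left integral onto the second argument on each side, reducing the whole claim to the composition identity
\[
{}_xD_b^{-(1-q)}\,{}_xD_b^{\gamma-q}\phi={}_xD_b^{-(1-\gamma)}\phi.
\]

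This composition is the step I expect to require the most care, since it mixes a fractional integral and a fractional derivative of the same sidedness and the naive semigroup law (\ref{semigroup}) applies only to pure integrals. My plan is to extend $\phi$ by zero to $\tilde{\phi}\in C_0^{\infty}(\mathbb{R})$ and invoke Property \ref{propertyeq2} with $\lambda=0$: on the whole line the Fourier symbols combine as $(-i\omega)^{-(1-q)}(-i\omega)^{\gamma-q}=(-i\omega)^{-(1-\gamma)}$, so the identity holds in $L^2(\mathbb{R})$ before restriction. Because $\tilde{\phi}$ is supported in $[a,b]$, the function ${}_xD_{\infty}^{\gamma-q}\tilde{\phi}$ vanishes for $x\ge b$ and coincides with ${}_xD_b^{\gamma-q}\phi$ on $\Omega$, and the right fractional integrals on $(x,\infty)$ and on $(x,b)$ agree for $x\in\Omega$. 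Hence the whole-line equality descends cleanly to the required composition on $\Omega$, and tracing everything back through the tempered substitution completes the proof.
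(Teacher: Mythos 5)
Your proposal is correct in outline but follows a genuinely different route from the paper. The paper works directly with the tempered operators: it first splits ${}_a\mathbb{D}_x^{\gamma,\lambda}u={}_a\mathbb{D}_x^{\gamma-q,\lambda}\,{}_a\mathbb{D}_x^{q,\lambda}u$ using the Riemann--Liouville composition law for $u(a)=0$, checks via (\ref{equationee1}) and (\ref{lefttermpe1}) that ${}_a\mathbb{D}_x^{q,\lambda}u\in L^2(\Omega)$, and then invokes an external transfer lemma (Lemma 2.4 of \cite{Li:10}) to move the order-$(\gamma-q)$ derivative onto $v$. You instead strip the exponential weights at the outset, convert both left derivatives of $w$ into fractional integrals of $w'$ via the Riemann--Liouville/Caputo coincidence under $w(a)=0$, and use only the elementary adjoint identity (\ref{adjonteq}) for integrals, pushing all remaining work into a right-sided composition on the test-function side. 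Your version is more self-contained (no appeal to \cite{Li:10}) at the cost of having to prove that composition yourself; both proofs share the same underlying idea of splitting $\gamma$ into $q$ and $\gamma-q$ and transferring the latter piece.

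The one step you should patch is the Fourier justification of ${}_xD_b^{-(1-q)}\,{}_xD_b^{\gamma-q}\phi={}_xD_b^{-(1-\gamma)}\phi$. The whole-line right fractional integral ${}_xD_{\infty}^{-\mu}\tilde{\phi}$ of a compactly supported $\tilde{\phi}$ decays only like $|x|^{\mu-1}$ as $x\to-\infty$, so it fails to lie in $L^2(\mathbb{R})$ when $\mu\ge\frac{1}{2}$ unless $\int\phi=0$; correspondingly the symbol $(-i\omega)^{-(1-\gamma)}\mathscr{F}[\tilde{\phi}](\omega)$ need not be square-integrable near $\omega=0$, so the claim that ``the identity holds in $L^2(\mathbb{R})$'' is not justified as stated. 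The fix is elementary and avoids Fourier analysis entirely: since $\phi$ and all its derivatives vanish at $b$, one has ${}_xD_b^{\gamma-q}\phi={}_xD_b^{-(1-\gamma+q)}(-\phi')$, whence ${}_xD_b^{-(1-q)}\,{}_xD_b^{\gamma-q}\phi={}_xD_b^{-(2-\gamma)}(-\phi')={}_xD_b^{-(1-\gamma)}\,{}_xD_b^{-1}(-\phi')={}_xD_b^{-(1-\gamma)}\phi$ by the finite-interval semigroup law (\ref{semigroup}) with $\lambda=0$, which is just Fubini. With that substitution your argument is complete.
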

\begin{proof} Combining the fact \cite[p. 74]{Podlubny:99} that ${}_aD_x^{q_1+q_2}u(x)= {}_aD_x^{q_1}{}_aD_x^{q_2}u(x)\,\,{\rm for~} 0\le q_1,\,q_2\le 1$, $ u(a)=0$, and  Definition \ref{definition:2}, one has
\begin{eqnarray}
{}_{a}\mathbb{D}_x^{\gamma,\lambda}{u}={}_{a}\mathbb{D}_x^{\gamma-q,\lambda}{}_{a}\mathbb{D}_x^{q,\lambda}u.
\end{eqnarray}
And by (\ref{equationee1}) and (\ref{lefttermpe1}), for any $ 0\le \mu_1\le 1$, it holds that
\begin{eqnarray*}
\left\|{}_{a}\mathbb{D}_x^{\mu_1,\lambda}{u}\right\|_{L^2(\Omega)}=\left\|{}_{a}^C\mathbb{D}_x^{\mu_1,\lambda}{u}\right\|_{L^2(\Omega)}
=\left\|{}_{a}\mathbb{D}_x^{-(1-\mu_1),\lambda}\,{\rm D}_x^1{u}\right\|_{L^2(\Omega)}\stackrel{<}{\sim}\left\| {\rm D}_x^1{u}\right\|_{L^2(\Omega)}.
\end{eqnarray*}
Then
\begin{eqnarray}
\left({}_{a}\mathbb{D}_x^{\gamma,\lambda}{u}, v\right)=\left({}_{a}\mathbb{D}_x^{\gamma-q,\lambda}{}_{a}\mathbb{D}_x^{q,\lambda}u, v\right)=\left({}_{a}\mathbb{D}_x^{q,\lambda}{u},{}_{x}\mathbb{D}_b^{\gamma-q,\lambda}{v}\right)
\end{eqnarray}
follows after using the result given in  \cite[Lemma 2.4]{Li:10}.
\end{proof}
\begin{theorem}\label{eqpplittheorm}
For all $1<\alpha\le2$,  problem (\ref {weakform}) has an unique solution $u\in H_0^{\frac{\alpha}{2}}(\Omega)$, and it holds that
\begin{equation}\label{theoremeq23}
\left\|u\right\|_{H^\frac{\alpha}{2}(\Omega)}\stackrel{<}{\sim} \|f\|_{H^{-\frac{\alpha}{2}}(\Omega)}.
\end{equation}
\end{theorem}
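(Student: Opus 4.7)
The natural approach is to apply the Lax–Milgram theorem to the bilinear form $A(\cdot,\cdot)$ on $H_0^{\alpha/2}(\Omega)\times H_0^{\alpha/2}(\Omega)$. The right-hand side $v\mapsto \langle f,v\rangle$ is automatically a bounded linear functional with norm $\|f\|_{H^{-\alpha/2}(\Omega)}$ by definition, so the existence, uniqueness, and a priori estimate \eqref{theoremeq23} will follow once $A$ is shown to be continuous and coercive on $H_0^{\alpha/2}(\Omega)$.

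Continuity is essentially a termwise application of Cauchy–Schwarz combined with the machinery developed in Section 3. For the leading terms $({}_a\mathbb{D}_x^{\alpha/2,\lambda}u,\,{}_x\mathbb{D}_b^{\alpha/2,\lambda}v)$ and its symmetric counterpart, I would invoke Theorem \ref{theorem2} together with Corollary \ref{corollaryeq35} to bound them by $\|u\|_{H^{\alpha/2}(\Omega)}\|v\|_{H^{\alpha/2}(\Omega)}$. For the mixed cross-term with factor $\lambda^{\alpha-1}$, Cauchy–Schwarz plus the embedding $\|{}_x\mathbb{D}_b^{1-\alpha/2,0}v\|_{L^2(\Omega)}\lesssim \|v\|_{H^{1-\alpha/2}(\Omega)}\lesssim \|v\|_{H^{\alpha/2}(\Omega)}$ (since $1-\alpha/2<\alpha/2$) gives the same bound. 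The convection term $({}_a\mathbb{D}_x^{\alpha/2,0}u,\,{}_x\mathbb{D}_b^{1-\alpha/2,0}(mv))$ is handled the same way after noting that multiplication by $m\in C^1(\overline{\Omega})$ is bounded on $H^{1-\alpha/2}(\Omega)$, and the reaction term $(cu,v)$ is trivial.

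The core of the argument is coercivity, which I would prove by splitting $A(u,u)$ into three groups. First, the purely tempered-fractional block
\begin{equation*}
-({}_a\mathbb{D}_x^{\alpha/2,\lambda}u,\,{}_x\mathbb{D}_b^{\alpha/2,\lambda}u)+\lambda^{\alpha}(u,u)
\end{equation*}
(the $p$ and $1-p$ pieces combine since the $L^2$ inner product is symmetric for real functions) can be rewritten as $B(u,u)-c_0\lambda^{\alpha}(u,u)$ in the notation of Theorem \ref{theorem2}; choosing $c_0$ small and using the fractional Poincaré–Friedrichs inequality of Lemma \ref{lemmaeq3} yields $\gtrsim \|u\|_{H^{\alpha/2}(\Omega)}^{2}$. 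Second, the convection-plus-reaction piece reduces, via Lemma \ref{section4lemma3} run backwards, to $(mu',u)+(cu,u)$; integration by parts (vanishing boundary values) gives $\int_{a}^{b}(c-\tfrac{1}{2}m')u^{2}\,dx\ge c_{1}\|u\|_{L^2(\Omega)}^{2}$ by the hypothesis $c-\tfrac12 m'>0$. Third, the mixed cross-term with coefficient $\alpha(1-2p)\lambda^{\alpha-1}$ is the main technical obstacle, since it carries the full $H^{\alpha/2}$ norm via Corollary \ref{corollaryeq35}; I would control it using Young's inequality together with an interpolation bound $\|u\|_{H^{1-\alpha/2}(\Omega)}\lesssim \|u\|_{L^2(\Omega)}^{1-\theta}\|u\|_{H^{\alpha/2}(\Omega)}^{\theta}$ with $\theta=2/\alpha-1\in[0,1)$, so that it absorbs cleanly: a small fraction into the $H^{\alpha/2}$ part from the first group and a $\lambda$-dependent constant times $\|u\|_{L^2}^{2}$ into the second group. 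The resulting bound $A(u,u)\gtrsim \|u\|_{H^{\alpha/2}(\Omega)}^{2}$ closes the Lax–Milgram hypotheses and produces \eqref{theoremeq23}.
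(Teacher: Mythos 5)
Your continuity argument matches the paper's, but your coercivity proof has a genuine gap in its first group. You claim that
\begin{equation*}
-\bigl({}_{a}\mathbb{D}_x^{\frac{\alpha}{2},\lambda}u,\,{}_{x}\mathbb{D}_b^{\frac{\alpha}{2},\lambda}u\bigr)+\lambda^{\alpha}(u,u)=B(u,u)-c_0\lambda^{\alpha}(u,u)
\end{equation*}
is coercive ``for $c_0$ small.'' It is not: in the Fourier representation from the proof of Theorem \ref{theorem2}, subtracting $c_0\lambda^{\alpha}(u,u)$ exactly cancels the $c_0\cos^{2\mu}(\theta)$ term that made $B$ coercive, leaving $\frac{1}{2\pi}\int g(\theta)\,\bigl|(\lambda+i\omega)^{\alpha/2}\mathscr{F}[\tilde{u}]\bigr|^2 d\omega$ with $g(0)=0$; the integrand degenerates at $\omega=0$, so this quantity is only nonnegative. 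Shrinking $c_0$ does not rescue you, because the lower-bound constant in $B(u,u)\stackrel{>}{\sim}\|u\|^2_{H^{\alpha/2}}$ is $\min\{g(\theta_0),c_0\cos^{2\mu}(\theta_0)\}$ and itself vanishes proportionally to $c_0$. This is precisely why the paper must \emph{borrow} the missing $c_0\lambda^{\alpha}\|u\|^2_{L^2}$ from the reaction term, setting $c_0=\inf_{x\in\Omega}\lambda^{-\alpha}\bigl(c(x)-m^{\prime}/2\bigr)>0$ and writing $A(u,u)=B(u,u)+\bigl((c-m^{\prime}/2-c_0\lambda^{\alpha})u,u\bigr)$; the paper's own Remark 4.1 acknowledges that avoiding this borrowing is an open issue it only resolves for $\alpha=2$. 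Your plan spends the reaction term elsewhere and leaves the leading block non-coercive.

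Your treatment of the cross-term with coefficient $\alpha(1-2p)\lambda^{\alpha-1}$ also misses the paper's key observation: by Lemma \ref{section4lemma3} with $\lambda=0$, $\gamma=1$, $q=\alpha/2$, one has $\bigl({}_{a}\mathbb{D}_x^{\frac{\alpha}{2},0}u,\,{}_{x}\mathbb{D}_b^{1-\frac{\alpha}{2},0}u\bigr)=(u^{\prime},u)=0$ for $u$ vanishing at the endpoints, so this term is identically zero on the diagonal and there is nothing to absorb. Your Young-plus-interpolation absorption would in any case deposit a $\lambda$-dependent multiple of $\|u\|^2_{L^2}$ that must be dominated by $\inf(c-m^{\prime}/2)\|u\|^2_{L^2}$, which the hypotheses do not guarantee for large $\lambda$; and once the reaction term has been (correctly) used to repair the first group, no coercivity margin remains to absorb anything. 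The fix is to follow the paper: collapse the cross-term and the convection term together via Lemma \ref{section4lemma3} into $(mu^{\prime},u)=-\frac{1}{2}(m^{\prime}u,u)$, and feed $c-m^{\prime}/2>0$ into the choice of $c_0$ rather than treating it as a separate positive contribution.
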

\begin{proof} First,  by Lemma \ref{lemmaeq3} and Corollary \ref{corollaryeq35},  it follows that
\begin{eqnarray}
&&\left|A\left(u,v\right)\right|\le(1-p)\left\|u\right\|_{H^\frac{\alpha}{2}(\Omega)}\left\|v\right\|_{H^\frac{\alpha}{2}(\Omega)}
+\left(1-p\right)\lambda^{\alpha}\left\|u\right\|_{L^2(\Omega)}\left\|v\right\|_{L^2(\Omega)}\\
&&+p\left\|v\right\|_{H^\frac{\alpha}{2}(\Omega)}\left\|u\right\|_{H^\frac{\alpha}{2}(\Omega)}+p\lambda^{\alpha}\left\|u\right\|_{L^2(\Omega)}\left\|u\right\|_{L^2(\Omega)}\nonumber\\
&&+\left|\alpha(1-2p)\lambda^{\alpha-1}\right|\left\|u\right\|_{H^\frac{\alpha}{2}(\Omega)}\left\|v\right\|_{H^\frac{\alpha}{2}(\Omega)}
+\left\|u\right\|_{H^\frac{\alpha}{2}(\Omega)}\left\|mv\right\|_{H^\frac{\alpha}{2}(\Omega)}+\left\|c\right\|_{\infty}\left\|u\right\|_{L^2(\Omega)}\left\|v\right\|_{L^2(\Omega)}. \nonumber\\
&&\stackrel{<}{\sim}\left\|u\right\|_{H^\frac{\alpha}{2}(\Omega)}\left\|v\right\|_{H^\frac{\alpha}{2}(\Omega)}\nonumber,
\end{eqnarray}
where $mv\in H_0^{\frac{\alpha}{2}}(\Omega)$ and $\left\|mv\right\|_{H^\frac{\alpha}{2}(\Omega)}\stackrel{<}{\sim} \left\|v\right\|_{H^\frac{\alpha}{2}(\Omega)}$ \cite[Lemma 3.2]{Ervin:05} have been used.

Secondly, assuming that $u\in C_0^{\infty}(\Omega)$, by Lemma \ref{section4lemma3} and using integration by parts it follows that
\begin{eqnarray}
&&\alpha(1-2p)\lambda^{\alpha-1}\left({}_{a}\mathbb{D}_x^{\frac{\alpha}{2},0}{u},\,{}_{x}\mathbb{D}_b^{1-\frac{\alpha}{2},0}{u}\right)
+\left({}_{a}\mathbb{D}_x^{\frac{\alpha}{2},0}{u},\,{}_{x}\mathbb{D}_b^{1-\frac{\alpha}{2},0}({mu})\right)=\left(m u^{\prime},u\right)=-\frac{1}{2}\left({m}^{\prime}u,u\right).\nonumber
\end{eqnarray}
 Then
\begin{eqnarray}
A\left(u,u\right)&=&-\left({}_{a}\mathbb{D}_x^{\frac{\alpha}{2},\lambda}{u},\,{}_{x}\mathbb{D}_b^{\frac{\alpha}{2},\lambda}{u}\right)
+(1+c_0)\lambda^{\alpha}\left(u,\,u\right)+\left(\left(c-{m^{\prime}}/{2}-c_0\lambda^{\alpha}\right)u,u\right). \nonumber
\end{eqnarray}
If $\lambda>0$, we take $c_0=\inf_{x\in {\Omega}}\lambda^{-\alpha}\left(c(x)-{m^{\prime}}/{2}\right)>0$. By Theorem \ref{theorem2}, one always has
\begin{equation}\label{eqnarreq}
\left\|u\right\|^2_{H^\frac{\alpha}{2}(\Omega)}\stackrel{<}{\sim}A\left(u,u\right).
\end{equation}
Noting the density of $C_0^{\infty}(\Omega)$, the continuities of $A(\cdot,\cdot)$ and $\|\cdot\|_{H^\frac{\alpha}{2}(\Omega)}$,  for any $u\in H_0^{\frac{\alpha}{2}}(\Omega)$, one still has (\ref{eqnarreq}).  Then using the Lax-Milgram theorem leads to the desired result.
\end{proof}
\begin{remark}
It seems  that the coercive condition becomes a little bit stronger for $\lambda>0$ than $\lambda=0$, but this further  requirement  might be removed by the fact that it is only the point $\theta=0\,( i.e., \omega=0)$ that destroys the positive lower bound of  $g(\theta)$ and one only needs $\left\|{}_{a}\mathbb{D}_x^{\mu,\lambda}\tilde{u}\right\|^2_{L^2(\Omega)}\stackrel{<}{\sim}B(u,u)$ rather than $\left\|{}_{-\infty}\mathbb{D}_x^{\mu,\lambda}\tilde{u}\right\|^2_{L^2(\mathbb{R})}\stackrel{<}{\sim}B(u,u)$ (see the proof of Theorem \ref{theorem2}). One can easily check this for $\alpha=2$:
 \begin{eqnarray}
 &&-\left({}_{a}\mathbb{D}_x^{1,\lambda}{u},\,{}_{x}\mathbb{D}_b^{1,\lambda}{u}\right)+\lambda^2\left(u, u\right)
 =\left(\left(\frac{d}{dx}+\lambda\right)u,\left(\frac{d}{dx}-\lambda\right)u\right)+\lambda^2\left(u, u\right)=\left|u\right|^2_{H^1(\Omega)}.\nonumber
\end{eqnarray}
\end{remark}

Here, we further discuss the Petrov-Galerkin method, which is popular for the problem with only the one-sided tempered fractional derivative \cite{Zayernourt:15}. First, we show the following lemma.

\begin{lemma}\label{onesidelemma}
If $0<{\alpha}\le 2$ and $\alpha\not=1$, then $u\in H^{\frac{\alpha}{2}}_0(\Omega)$ is equivalent to $e^{\varrho x}u\in H^{\frac{\alpha}{2}}_0(\Omega)$,
i.e., the map $\mathscr{M}:H_0^{\frac{\alpha}{2}}(\Omega)\to H_0^{\frac{\alpha}{2}}(\Omega): u\to e^{\varrho x}u$  is bijection.
\end{lemma}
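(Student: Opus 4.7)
The plan is to reduce bijectivity to the two-sided norm equivalence
$\|e^{\varrho x}u\|_{H^{\alpha/2}(\Omega)}\sim \|u\|_{H^{\alpha/2}(\Omega)}$
on a dense subspace. Since $e^{\varrho x}\in C^\infty(\overline{\Omega})$ is nowhere zero, the map $u\mapsto e^{\varrho x}u$ sends $C_0^\infty(\Omega)$ bijectively onto itself, with inverse $v\mapsto e^{-\varrho x}v$. Once the norm equivalence is established for $u\in C_0^\infty(\Omega)$, density of $C_0^\infty(\Omega)$ in $H_0^{\alpha/2}(\Omega)$ extends $\mathscr{M}$ to a bounded bijection on $H_0^{\alpha/2}(\Omega)$ whose inverse is the continuous extension of multiplication by $e^{-\varrho x}$.

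The key tool is Corollary \ref{corollaryeq35}: for any $\lambda\ge 0$ and $\mu=\alpha/2\in(0,1]$ with $\mu\neq 1/2$ (precisely $0<\alpha\le 2$, $\alpha\neq 1$),
\begin{equation*}
\|w\|_{H^{\alpha/2}(\Omega)}\sim \|{}_{a}\mathbb{D}_x^{\alpha/2,\lambda}w\|_{L^2(\Omega)}\sim\|{}_{x}\mathbb{D}_b^{\alpha/2,\lambda}w\|_{L^2(\Omega)},\qquad w\in H_0^{\alpha/2}(\Omega).
\end{equation*}
This freedom in choosing the tempering parameter is what lets us absorb the exponential factor. I would then split into cases by the sign of $\varrho$.

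For $\varrho\ge 0$, take $\lambda=\varrho$ and work with the left-tempered operator. From Definition \ref{definition:2} we have ${}_{a}\mathbb{D}_x^{\alpha/2,\varrho}u=e^{-\varrho x}{}_{a}D_x^{\alpha/2}(e^{\varrho x}u)$, and since $e^{\pm\varrho x}$ is bounded above and below by positive constants on the finite interval $\Omega$, one reads off
\begin{equation*}
\|u\|_{H^{\alpha/2}(\Omega)}\sim\|{}_{a}\mathbb{D}_x^{\alpha/2,\varrho}u\|_{L^2(\Omega)}\sim\|{}_{a}D_x^{\alpha/2}(e^{\varrho x}u)\|_{L^2(\Omega)}\sim\|e^{\varrho x}u\|_{H^{\alpha/2}(\Omega)},
\end{equation*}
the last step being Corollary \ref{corollaryeq35} applied with $\lambda=0$ to the compactly supported function $e^{\varrho x}u$. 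For $\varrho<0$, I would run the symmetric argument using the right-tempered operator with $\lambda=-\varrho>0$, based on the identity ${}_{x}\mathbb{D}_b^{\alpha/2,-\varrho}u=e^{-\varrho x}{}_{x}D_b^{\alpha/2}(e^{\varrho x}u)$.

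The main subtlety is not the chain of equivalences itself but the density/boundary step: one must verify that passage to the limit preserves membership in $H_0^{\alpha/2}(\Omega)$ rather than merely $H^{\alpha/2}(\Omega)$ when $\alpha/2>1/2$. This is exactly why the density argument is carried out inside $C_0^\infty(\Omega)$, where zero boundary values are automatically preserved under multiplication by the nonvanishing factor $e^{\varrho x}$; for $0<\alpha<1$ the issue is vacuous since $H^{\alpha/2}(\Omega)=H_0^{\alpha/2}(\Omega)$. The exclusion $\alpha\neq 1$ enters only through Corollary \ref{corollaryeq35}, which breaks down at the critical exponent $\mu=1/2$ where the fractional Poincar\'e--Friedrichs equivalence fails.
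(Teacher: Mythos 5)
Your proposal is correct and follows essentially the same route as the paper: both proofs exploit the freedom of the tempering parameter in Corollary \ref{corollaryeq35} to absorb the factor $e^{\varrho x}$ (the paper writes ${}_x\mathbb{D}_b^{\alpha/2,\varrho}(e^{\varrho x}u)=e^{\varrho x}\,{}_xD_b^{\alpha/2}u$, you write the mirror identity ${}_a\mathbb{D}_x^{\alpha/2,\varrho}u=e^{-\varrho x}\,{}_aD_x^{\alpha/2}(e^{\varrho x}u)$), combined with boundedness of $e^{\pm\varrho x}$ on the finite interval and the split into the cases $\varrho\ge 0$ and $\varrho<0$. Your explicit treatment of the density step and of why the limit stays in $H_0^{\alpha/2}(\Omega)$ is a welcome addition that the paper leaves implicit.
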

\begin{proof}
Assume $\varrho\ge0$. Then the conclusion follows by using
\begin{eqnarray}
\left\|{}_x\mathbb{D}_b^{\frac{\alpha}{2},\varrho}(e^{\varrho x}u)\right\|^2= \left\|e^{\varrho x}{}_xD_b^{\frac{\alpha}{2}}u\right\|^2\simeq\left\|{}_xD_b^{\frac{\alpha}{2}}u\right\|^2
\end{eqnarray}
and Corollary \ref{corollaryeq35}. One can similarly prove the case: $\varrho<0$.
\end{proof}

For discussing the one-sided tempered fractional equation, we introduce two bilinear form defined as:
\begin{eqnarray}
&&A_1\left(u, v\right):=\left({}_{a}\mathbb{D}_x^{\frac{\alpha}{2},\lambda}{u},\,{}_{x}\mathbb{D}_b^{\frac{\alpha}{2},\lambda}{v}\right)+
\alpha\lambda^{\alpha-1}\left({}_{a}\mathbb{D}_x^{\frac{\alpha}{2},0}{u},\,{}_{x}\mathbb{D}_b^{1-\frac{\alpha}{2},0}{v}\right)+\lambda^{\alpha}\left(u, v\right),\\
&&A_2\left(u_1, v_1\right):=\left({}_{a}{D}_x^{\frac{\alpha}{2}}{u_1},\,{}_{x}{D}_b^{\frac{\alpha}{2}}{v_1}\right)+
\alpha\lambda^{\alpha-1}\left({}_{a}{D}_x^{\frac{\alpha}{2}}{u_1},\,{}_{x}{D}_b^{1-\frac{\alpha}{2}}{v_1}\right)+(1-\alpha)\lambda^{\alpha}\left(u_1, v_1\right).\nonumber
\end{eqnarray}
Consider the left tempered fractional equation with the homogeneous boundary condition:
\begin{equation}\label{epereeq1}
-{}_a D_x^{\alpha,\lambda}u=-{}_a \mathbb{D}_x^{\alpha,\lambda}u(x)+\lambda^{\alpha}u(x)+\alpha\lambda^{\alpha-1}\frac{du(x)}{d x}=f(x);
\end{equation}
still with the homogeneous boundary condition the companion equation of (\ref{epereeq1}) is:
\begin{equation} \label{epereeq2}
-{}_a {D}_x^{\alpha}u_1(x)+\alpha\lambda^{\alpha-1}\frac{du_1(x)}{d x}+(1-\alpha)\lambda^{\alpha}u_1(x)=e^{\lambda x}f(x),
\end{equation}
where $u_1=e^{\lambda x}u$. The Petrov-Galerkin formulation of (\ref{epereeq1}) is to find $u\in e^{-\lambda x}\cdot H_0^{\frac{\alpha}{2}}(\Omega)$, such that for any $v\in e^{\lambda x}\cdot H_0^{\frac{\alpha}{2}}(\Omega)$,
\begin{equation}\label{potroveq1}
A_1\left(u, v\right)=\left\langle f,v\right\rangle;
\end{equation}
and the Galerkin formulation of the companion equation (\ref{epereeq2}) is to find $u_1\in H_0^{\frac{\alpha}{2}}(\Omega)$, such that for any $v_1\in H_0^{\frac{\alpha}{2}}(\Omega)$,
\begin{eqnarray}  \label{potroveq2}
A_2\left(u_1, v_1\right)=\left\langle e^{\lambda x}f,v_1\right\rangle.
\end{eqnarray}
By Lemma \ref{onesidelemma}, we know that (\ref{potroveq1}) and (\ref{potroveq2}) are equivalent with $u_1=e^{\lambda x}u$.


\begin{lemma}\label{petreqlemma}
For $0\le\lambda^{\alpha}<\frac{\left|\cos(\frac{\pi\alpha}{2})\right|\Gamma^2(\frac{\alpha}{2}+1)}{2\pi(\alpha-1)(b-a)^{\alpha}}$ and $1<\alpha<2$,  the weak formulae (\ref{potroveq1}) and(\ref{potroveq2}) has an unique solution, and which satisfies $\left\|u\right\|_{H^{\frac{\alpha}{2}}(\Omega)}\le \left\|e^{\lambda x}f\right\|_{H^{-\frac{\alpha}{2}}(\Omega)}$.
\end{lemma}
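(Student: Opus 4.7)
My plan is to use Lemma~\ref{onesidelemma} to reduce the Petrov--Galerkin problem (\ref{potroveq1}) to the standard Galerkin problem (\ref{potroveq2}) posed on $H_0^{\alpha/2}(\Omega)$, and then apply the Lax--Milgram theorem to $A_2$ on this space. Continuity and existence/uniqueness are routine once coercivity is in hand; the core of the argument is to establish coercivity under the stated smallness condition on $\lambda$.

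Continuity follows from Cauchy--Schwarz: each of the three summands in $A_2(u_1,v_1)$ is bounded by $\|u_1\|_{H^{\alpha/2}(\Omega)}\|v_1\|_{H^{\alpha/2}(\Omega)}$ up to constants, using Corollary~\ref{corollaryeq35} for the fractional derivative norms together with the embedding $\|{}_{x}D_b^{1-\alpha/2}v_1\|_{L^2(\Omega)}\stackrel{<}{\sim}\|v_1\|_{H^{\alpha/2}(\Omega)}$, valid since $1-\alpha/2<\alpha/2$ for $\alpha>1$.

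For coercivity, I first work with $u_1\in C_0^\infty(\Omega)$. The crucial simplification is that the cross term vanishes: by Lemma~\ref{section4lemma3} applied with $\lambda=0$, $\gamma=1$, $q=\alpha/2$, one has $({}_{a}D_x^{\alpha/2}u_1,{}_{x}D_b^{1-\alpha/2}u_1)=(u_1',u_1)=\tfrac12[u_1^2(b)-u_1^2(a)]=0$. The main quadratic term is evaluated via Plancherel exactly as in the proof of Theorem~\ref{theorem2} specialised to $\lambda=0$: letting $\tilde u_1$ denote the zero extension,
\begin{equation*}
-({}_{a}D_x^{\alpha/2}u_1,{}_{x}D_b^{\alpha/2}u_1)=\frac{|\cos(\pi\alpha/2)|}{2\pi}\int_{\mathbb{R}}|\omega|^{\alpha}|\mathscr{F}[\tilde u_1](\omega)|^2\,d\omega,
\end{equation*}
which dominates a positive multiple of $\|{}_{a}D_x^{\alpha/2}u_1\|_{L^2(\Omega)}^2$. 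The only potentially destabilising summand is $(1-\alpha)\lambda^{\alpha}\|u_1\|_{L^2(\Omega)}^2$, which is negative because $\alpha>1$; it is absorbed via the Poincar\'e-type inequality obtained by applying (\ref{lefttermpe1}) to the identity $u_1={}_{a}\mathbb{D}_x^{-\alpha/2,0}\,{}_{a}D_x^{\alpha/2}u_1$ from Property~\ref{propertyeq1}:
\begin{equation*}
\|u_1\|_{L^2(\Omega)}\le\frac{(b-a)^{\alpha/2}}{\Gamma(\alpha/2+1)}\|{}_{a}D_x^{\alpha/2}u_1\|_{L^2(\Omega)}.
\end{equation*}
The stated threshold $\lambda^\alpha<\frac{|\cos(\pi\alpha/2)|\Gamma^2(\alpha/2+1)}{2\pi(\alpha-1)(b-a)^\alpha}$ is precisely what is required so that the combination of these two bounds leaves a strictly positive multiple of $\|{}_{a}D_x^{\alpha/2}u_1\|_{L^2(\Omega)}^2$, equivalently (by Corollary~\ref{corollaryeq35}) a positive multiple of $\|u_1\|_{H^{\alpha/2}(\Omega)}^2$. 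Density of $C_0^\infty(\Omega)$ in $H_0^{\alpha/2}(\Omega)$ and continuity of $A_2$ then extend coercivity to the whole space.

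The main delicate point is tracking the sharp interplay between the Plancherel factor $|\cos(\pi\alpha/2)|/(2\pi)$ and the Poincar\'e factor $(b-a)^{\alpha/2}/\Gamma(\alpha/2+1)$ to match the precise threshold in the hypothesis, and then confirming that the cross-term vanishing (established only on $C_0^\infty(\Omega)$ via Lemma~\ref{section4lemma3}) can be safely passed to the limit in $H_0^{\alpha/2}(\Omega)$ through the continuity of $A_2$. Once coercivity holds, Lax--Milgram produces a unique $u_1\in H_0^{\alpha/2}(\Omega)$ with $\|u_1\|_{H^{\alpha/2}(\Omega)}\stackrel{<}{\sim}\|e^{\lambda x}f\|_{H^{-\alpha/2}(\Omega)}$, and Lemma~\ref{onesidelemma} transports the bound back to $u=e^{-\lambda x}u_1$.
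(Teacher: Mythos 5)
Your proposal is correct and follows essentially the same route as the paper: coercivity of $A_2$ via the Plancherel identity for the principal term, vanishing of the advection cross term on $C_0^\infty(\Omega)$, and absorption of the negative $(1-\alpha)\lambda^{\alpha}\|u_1\|_{L^2(\Omega)}^2$ term through the Poincar\'e-type bound $\|u_1\|_{L^2(\Omega)}=\|{}_a\mathbb{D}_x^{-\alpha/2,0}\,{}_aD_x^{\alpha/2}u_1\|_{L^2(\Omega)}\le\frac{(b-a)^{\alpha/2}}{\Gamma(\alpha/2+1)}\|{}_aD_x^{\alpha/2}u_1\|_{L^2(\Omega)}$, yielding exactly the stated threshold. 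The only cosmetic difference is that you conclude by Lax--Milgram for $A_2$ and transport the result through Lemma \ref{onesidelemma}, whereas the paper additionally records the substitution $u=e^{-\lambda x}u_1$, $v=e^{\lambda x}u_1$ to phrase the same fact as the Bab\v{u}ska inf-sup condition for $A_1$.
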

\begin{proof}
For the Galerkin formulation (\ref{potroveq2}),
\begin{eqnarray*}
A_2\left(u_1, u_1\right)&\ge&\frac{\left|\cos( \frac{\pi\alpha}{2})\right|}{2\pi}\left\|{}_a D_x^{\frac{\alpha}{2}}u_1\right\|^2_{L^2(\Omega)}+(1-\alpha)\lambda^{\alpha}\left\|{}_aD^{-\frac{\alpha}{2}}_x\left({}_aD^{\frac{\alpha}{2}}_xu_1\right)\right\|^2_{L^2(\Omega)}\\
&\ge&\left(\frac{\left|\cos( \frac{\pi\alpha}{2})\right|}{2\pi}+(1-\alpha)\lambda^{\alpha}\frac{(b-a)^{\alpha}}{\Gamma^2(\frac{\alpha}{2}+1)}\right)\left\|{}_a D_x^{\frac{\alpha}{2}}u_1\right\|^2_{L^2(\Omega)},
\end{eqnarray*}
where $B(u,u)=\frac{\left|\cos(\pi\mu)\right|}{2\pi}\left\|{}_{-\infty}\mathbb{D}_x^{\mu,\lambda}\tilde{u}\right\|^2_{L^2(\mathbb{R})}\ge \frac{\left|\cos(\pi\mu)\right|}{2\pi}\left\|{}_a\mathbb{D}_x^{\mu,\lambda}{u}\right\|^2_{L^2(\Omega)}$
 and Theorem \ref{fractionalembongin} are used.  For the Petrov-Galerkin formulation (\ref{potroveq1}), taking $u$ and $v$ as $e^{-\lambda x}u_1$ and $e^{\lambda x}u_1$, respectively, leads to $\left|A_1\left(u, v\right)\right|=A_2\left(u_1, u_1\right)$. So, the Bab\v{u}ska Inf-Sup conditions \cite{Babuska:71} are verified.

\end{proof}

In fact, if the model just has the one-sided tempered fractional derivatives ${}_a \mathbb{D}_x^{\alpha_1,\lambda}$ and   ${}_a\mathbb{D}_x^{\alpha_2,\lambda}$ \cite{Zayernourt:15}, it is convenient to convert the Petrov-Galerkin problem into the Galerkin problem, e.g., the tempered fractional advection-diffusion equation
\begin{equation} \label{Petrov_to_Galerkin}
-{}_a \mathbb{D}_x^{\alpha_1,\lambda}u(x)+d\cdot {}_a \mathbb{D}_x^{\alpha_2,\lambda}u(x)=f(x)
\end{equation}
with $u(a)=u(b)=0$, $0\le \alpha_2\le 1$, $1< \alpha_1\le2$, and $f\in H^{-\alpha_1/2}(\Omega)$; by (\ref{derivatiequ1}) and Lemma \ref{section4lemma3}, the Petrov-Galerkin solution of (\ref{Petrov_to_Galerkin}) $u=e^{-\lambda x}u_1$ with $u_1\in H_0^{\alpha_1/2}(\Omega)$ satisfying
\begin{eqnarray}\label{equationeq1}
\qquad-\left({}_{a}{D}_x^{\frac{\alpha_1}{2}}{u_1},\,{}_{x}{D}_b^{\frac{\alpha_1}{2}}{v_1}\right)+d\cdot\left({}_{a}{D}_x^{\frac{\alpha_2}{2}}{u_1},\,{}_{x}{D}_b^{\frac{\alpha_2}{2}}{v_1}\right)=\left\langle e^{\lambda x}f,v_1\right\rangle \,\, v_1\in H_0^{\alpha_1/2}(\Omega).
\end{eqnarray}
 If $ d>-\frac{\left|\cos(\frac{\alpha_1}{2})\right|\Gamma^2(\frac{\alpha_1-\alpha_2}{2}+1)}{2\pi(b-a)^{(\alpha_1-\alpha_2)}}$,
(\ref{equationeq1}) has an unique solution, which follows from Theorem \ref{theorem2} and \ref{fractionalembongin} with $\lambda=0$.
\subsubsection{Numerical implementation}
Now, we discuss the efficient implementation. With the equidistant nodes $a=x_0<x_1\cdots<x_{N}=b$, $i=0,\cdots,N-1$, $x_{i+1}-x_i=h$, the linear element bases are given as
 \begin{eqnarray}
 \phi_{h,k}(x)=\left\{ \begin{array}{ll}\frac{1}{\sqrt{h}}\cdot\frac{x-x_{k}}{h}&x_{k}\le x\le x_{k+1}\\
 \frac{1}{\sqrt{h}}\cdot\frac{x_{k+2}-x}{h}&x_{k+1}<x\le x_{k+2}\end{array}\quad k=0,1,\cdots,N-2.\right.
 \end{eqnarray}
Defining $\phi(x)=\left\{\begin{array}{ll}x&0\le x\le 1\\2-x&1<x\le2\end{array}\right.$, it is easy to check that
\begin{eqnarray}\label{baseeq1}
&&\phi(x)=\phi(2x-1)+\frac{1}{2}\left(\phi(2x)+\phi(2x-2)\right),\label{sacleq1}\\
&&\phi_{h,k}(x)=\frac{1}{\sqrt{h}}\cdot\phi\left(\frac{x-a}{h}-k\right)\quad k=0,1,\cdots,N-2.\end{eqnarray}
Therefore $\phi_{h,k}$ is just the dilation and translation of a single function $\phi_h$. Let $\Phi_h(x)=\left\{\phi_{h,k}(x)\right\}_{k=0}^{N-1}$ and $ S_h={\rm span}\left\{\Phi_h(x)\right\}$. Then the  finite element Galerkin  approximation of (\ref{weakform}) or (\ref{potroveq2}) can be given as: find $u_h\in S_h$, such that
 \begin{eqnarray}
 A\left(u_h,v\right)=\left\langle f,v\right\rangle \quad\forall v\in S_h ~~~ {\rm or} ~~~ A_2\left(u_h,v\right)=\left\langle f,v\right\rangle \quad\forall v\in S_h;
 \end{eqnarray}
and the  Petrov-Galerkin finite element approximation of (\ref{potroveq1}) is: find $u_h\in e^{-\lambda x}\cdot S_h$, such that
\begin{eqnarray}
A_1\left(u_h, v\right)=\left\langle f,v\right\rangle \quad v\in e^{\lambda x}\cdot S_h,
\end{eqnarray}
where the discrete Bab\v{u}ska Inf-sup conditions  \cite{Babuska:71} can be checked similar to the proof of Lemma $\ref{petreqlemma}$.
Both the approximations arrive to the error estimate
\begin{eqnarray}\label{eqarrayeq}
\left\|u-u_h\right\|_{H^{\frac{\alpha}{2}}(\Omega)}\stackrel{<}{\sim} h^{\min(2,s)-\frac{\alpha}{2}}\left\|u\right\|_{H^s(\Omega)}.
\end{eqnarray}

To simplify the calculations and reduce the storage, we present the following result.
\begin{theorem}\label{Theoremnumercialeq}
Let $\mathcal{L}[u](x)=\int_a^xg(x-\xi)u(\xi)d\xi$. Assume that there exist  $\phi_1(x)$ and $\phi_2(x)$ with compact supports ${\rm supp}\phi_1(x)=[0,d_1]$ and $ {\rm supp}\phi_2(x)=[0,d_2]$, such that all the supports of  $\phi_{1,k}(x)=\phi_1(c(x-a)-k)$ and $\phi_{2,k}(x)=\phi_2(c(x-a)-k), k=0,1,\cdots, M,c>0$ lie in $(a,b)$, and define
$\Phi_1(x)=\left\{\phi_{1,k}\right\}_{k=0}^M$ and $\Phi_2(x)=\left\{\phi_{2,k}\right\}_{k=0}^M$.
 Then the matrix $\big(\mathcal{L}[\Phi_1],\Phi_2\big)$ with the element $\big(\mathcal{L}[\Phi_1],\Phi_2\big)_{ij}=\big(\phi_{1,i}(x), \phi_{2,j}(x) \big)$  is Toeplitz. 
\end{theorem}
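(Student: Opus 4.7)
The plan is to show that the entry $\bigl(\mathcal{L}[\phi_{1,i}],\phi_{2,j}\bigr)$ depends on $i,j$ only through the difference $j-i$, which is exactly the definition of a Toeplitz matrix. The essential ingredient is that both bases are translates of a fixed shape, and that $\mathcal{L}$ is a convolution operator whose kernel $g(x-\xi)$ is translation invariant.

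First I would write the entry out explicitly:
\begin{equation*}
\bigl(\mathcal{L}[\phi_{1,i}],\phi_{2,j}\bigr)=\int_a^b\phi_{2,j}(x)\int_a^x g(x-\xi)\,\phi_{1,i}(\xi)\,d\xi\,dx,
\end{equation*}
and substitute $\phi_{1,i}(\xi)=\phi_1(c(\xi-a)-i)$, $\phi_{2,j}(x)=\phi_2(c(x-a)-j)$. Next I would perform the change of variables $y=c(x-a)-j$ and $\eta=c(\xi-a)-i$, so that $dx\,d\xi=c^{-2}\,dy\,d\eta$ and, crucially, $x-\xi=\bigl(y-\eta+(j-i)\bigr)/c$. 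This turns the kernel into $g\!\left(\bigl(y-\eta+(j-i)\bigr)/c\right)$, which sees $i,j$ only through $j-i$.

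Then I would argue that the region of integration, after the change of variables, also depends only on $j-i$. By the hypothesis that every support $\mathrm{supp}\,\phi_{1,k}$ and $\mathrm{supp}\,\phi_{2,k}$ lies inside $(a,b)$, the outer and inner limits on $a,b$ can be replaced by integration against the compact supports of $\phi_1$ and $\phi_2$, i.e.\ $y\in[0,d_2]$ and $\eta\in[0,d_1]$; the only remaining constraint is the ordering $\xi\le x$, which in the new variables becomes $\eta\le y+(j-i)$. Hence
\begin{equation*}
\bigl(\mathcal{L}[\phi_{1,i}],\phi_{2,j}\bigr)
=\frac{1}{c^2}\int_0^{d_2}\phi_2(y)\int_{\{\eta\in[0,d_1]:\eta\le y+(j-i)\}}
g\!\left(\tfrac{y-\eta+(j-i)}{c}\right)\phi_1(\eta)\,d\eta\,dy,
\end{equation*}
and the right-hand side is a function of $j-i$ alone. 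This establishes the Toeplitz property.

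The main obstacle, and the one point that I would treat carefully, is the removal of the boundary terms: replacing the outer integral over $(a,b)$ by one over $\mathrm{supp}\,\phi_2$ requires the assumption that the supports strictly lie in $(a,b)$, and the inner upper limit $x$ (rather than $b$) must be respected because $\mathcal{L}$ is a one-sided convolution. The latter is what forces the constraint $\eta\le y+(j-i)$ inside the transformed region; verifying that this constraint and the supports together produce a domain depending only on $j-i$ is the substantive step, while everything else is a routine change of variables.
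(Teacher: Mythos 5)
Your proof is correct and follows essentially the same route as the paper's: both reduce the entry to an integral over the fixed supports $[0,d_2]\times[0,d_1]$ via the affine change of variables tied to the translation structure, obtaining an expression (with the one-sided constraint $\eta\le y+(j-i)$, equivalently the paper's upper limit $\min\{x+k_2-k_1,d_1\}$) that depends only on $j-i$. The only difference is cosmetic — you perform the two substitutions simultaneously where the paper does them sequentially.
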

\begin{proof} Since
\begin{eqnarray}
&&\big(\mathcal{L}[\phi_{1,k_1}],\phi_{2,k_2}\big)=\int_a^b\int_a^xg(x-\xi)\phi_{1,k_1}(\xi)\,d\xi\,\phi_{2,k_2}(x)\,dx\\
&&~~~~~~~=\int_0^{b-a}\int_0^xg(x-\xi)\phi_1(c\xi-k_1)\,d\xi \phi_2(cx-k_2)\,dx\nonumber\\
&&~~~~~~~=\int_{\frac{k_2}{c}}^{\frac{d_2+k_2}{c}}\int_0^xg(x-\xi)\phi_1(c\xi-k_1)\,d\xi \phi_2(cx-k_2)\,dx\nonumber\\
&&~~~~~~~~=\frac{1}{c}\int_0^{d_2}\int_0^{\frac{x+k_2}{c}}g\left(\frac{k_2+x}{c}-\xi\right)\phi_1(c\xi-k_1)\,d\xi \phi_2(x)\,dx\nonumber\\
&&~~~~~~~~=\frac{1}{c^2}\int_0^{d_2}\int_0^{\min\{x+k_2-k_1,d_1\}}g\left(\frac{x+k_2-k_1-\xi}{c}\right)\phi_1(\xi)\,d\xi \phi_2(x)\,dx,\nonumber
\end{eqnarray}
which just depends on the value of $k_2-k_1$. So, the matrix is Toeplitz.
\end{proof}

Because of the property of the dense matrix
\begin{eqnarray}
\left({}_{x}\mathbb{D}_b^{\frac{\alpha}{2},\lambda}{\Phi_h},\,{}_{a}\mathbb{D}_x^{\frac{\alpha}{2},\lambda}{\Phi_h}\right)
=\left({}_{a}\mathbb{D}_x^{\frac{\alpha}{2},\lambda}{\Phi_h},\,{}_{x}\mathbb{D}_b^{\frac{\alpha}{2},\lambda}{\Phi_h}\right)^T,
\end{eqnarray}
we only consider ${\bf A}_l=\left({}_{a}\mathbb{D}_x^{\frac{\alpha}{2},\lambda}{\Phi_h},\,{}_{x}\mathbb{D}_b^{\frac{\alpha}{2},\lambda}{\Phi_h}\right)$. Using the fact  $\phi_{h,k}\in H^1_0(\Omega)$, one has
\begin{eqnarray} \label{toeplitzargument}
&&\left({}_{a}\mathbb{D}_x^{\frac{\alpha}{2},\lambda}{\Phi_h},\,{}_{x}\mathbb{D}_b^{\frac{\alpha}{2},\lambda}{\Phi_h}\right)
=-\left({}_a\mathbb{D}_x^{-(2-\alpha), \lambda}\left(\frac{d}{dx}+\lambda\right){\Phi_h},\,\left(\frac{d}{dx}-\lambda\right){\Phi_h}\right).
\end{eqnarray}
Obviously, all the elements of the vector function $\big(\frac{d}{dx}+\lambda\big){\Phi_h}$ or $\big(\frac{d}{dx}-\lambda\big){\Phi_h}$ are still  the dilation and translation of a single function. Letting $g(x)=e^{-\lambda x}x^{1-\alpha}$ in  Theorem \ref{Theoremnumercialeq}, matrix ${\bf A}_l$ is Toeplitz, and noting the compact support of $\phi_{h,k}$, only $N$ elements need to be calculated and stored.

It is also possible to calculate ${\bf A}_l$ of the high-degree element bases, with the computation and storage cost   $\mathcal{O}(N)$, instead of $\mathcal{O}(N^2)$. Based on  Theorem \ref{Theoremnumercialeq}, we try to take the bases as the dilations and translations of several known functions. For example, if one defines the compactly supported functions
\begin{eqnarray*}
&& H_1(x):=\left\{\begin{array}{lr} 2x^2-x &0\le x< 1,\\2x^2-7x+6&\quad 1\le x\le 2,\end{array}\right.\\[3pt]
&& H_2(x):=-4x^2+4x\quad 0\le x\le 1,
\end{eqnarray*}
and takes
\begin{eqnarray*}
&&\Phi_{h,1}=\Big\{\phi_{h,1,k}, k=0,\cdots,N-2\,\big|\phi_{h,1,k}=\frac{1}{\sqrt{h}}H_1\left(\frac{x-a}{h}-k\right)\Big\},\\
&&\Phi_{h,2}=\Big\{\phi_{h,2,k}, k=0,\cdots,N-1\,\big|\phi_{h,2,k}=\frac{1}{\sqrt{h}}H_2\left(\frac{x-a}{h}-k\right)\Big\},
\end{eqnarray*}
then $\Phi_h=\big\{\Phi_{h,1},\Phi_{h,2}\big\}$ is the bases of the quadratic element space, and it produces a block Toeplitz matrix. To compute ${\bf A}_l$, we can separate it into four parts, and in total only  $4\cdot2^J$ entries need to be computed and stored.

Though we have reduced the computation cost to $\mathcal{O}(N)$ to produce the corresponding stiffness matrix,  unlike the standard Riemann-Liouville operators, here even for the linear element approximation, the numerical integration must be used (for $\lambda>0$) for calculating ${\bf A}_l$. If only the one-sided derivative appears, the Petrov-Galerkin approximation will bring great convenience in generating the entries of the stiff matrix. Indeed, taking
\begin{eqnarray}\label{baseeq2}
\phi(x)=x_{+}-2(x-1)_++(x-2)_+,
\end{eqnarray}
for ${d}/{c}\ge a $ with $c>0,\,k\in \mathbb{N}^+$, one has
\begin{eqnarray*}
{}_a\mathbb{D}_x^{\pm\left(\alpha-1\right),\lambda}\left[e^{-\lambda x}(cx-d)_+^{k}\right]=c^{\pm\left(\alpha-1\right)}\frac{\Gamma(k+1)}{\Gamma(k\mp\left(\alpha-1\right)+1)}e^{-\lambda x}(cx-d)^{k\mp \left(\alpha-1\right)},
\end{eqnarray*}
where $x_+^k=\left(\max\{0,x\}\right)^k$. Therefore, using  (\ref{baseeq1}) and  (\ref{baseeq2}), it is easy to find that

\begin{eqnarray}\label{computeeq}
&&{}_a\mathbb{D}_x^{\pm\left(\alpha-1\right),\lambda}\left[e^{-\lambda x}\phi_{h,k}(x)\right]={}_a\mathbb{D}_x^{\pm\left(\alpha-1\right),\lambda}\left[e^{-\lambda x}\phi(\frac{x-a}{h}-k)\right]\\
&&~~~~~~~~~~~~~~~~~~~~~~~~~~~~~~~~~~~~={e^{-\lambda x}h^{\mp \left(\alpha-1\right)-\frac{1}{2}}}J\left(\frac{x-a}{h}-k\right),\nonumber
\end{eqnarray}
where $J(x)=\frac{1}{\Gamma(2\mp\left(\alpha-1\right))}\left(x^{1\mp\left(\alpha-1\right)}_{+}
-2\left(x-1\right)^{1\mp\left(\alpha-1\right)}_++\left(x-2\right)^{1\mp\left(\alpha-1\right)}_+\right)$.

Similar to the argument of (\ref{toeplitzargument}),  ${\bf A}_l=-\left({}_a{\mathbb D}_x^{-(2-\alpha),0}\frac{d}{dx}(\Phi_h),\,\frac{d}{dx}\left(\Phi_h\right)\right)$ is Toeplitz, and it can be exactly calculated by (\ref{computeeq}).
For the right tempered case, using the definitions (\ref{definition:1})--(\ref{definition:2}) and the symmetry  of $\phi(x)=\phi(2-x)$, it is easy to check that
\begin{eqnarray}
&&{}_x\mathbb{D}_b^{\pm\left(\alpha-1\right),\lambda}v(x)={}_a\mathbb{D}_y^{\pm\left(\alpha-1\right),\lambda}v(a+b-y)\Big|_{y=a+b-x},\\
&&\phi_{h,k}(a+b-x)=\phi_{h,N-2-k}(x).
\end{eqnarray}
Then it yields
\begin{eqnarray}
{}_x\mathbb{D}_b^{\pm\left(\alpha-1\right),\lambda}\left[e^{\lambda x}\phi_{h,k}(x)\right]=={e^{\lambda x}h^{\mp \left(\alpha-1\right)-\frac{1}{2}}}J\left(\frac{x-a}{h}-(N-2-k)\right).
\end{eqnarray}
And the similar results can be got for the right tempered matrix.

These techniques also apply to  the high-degree elements, such as, for the quadratic element, one  has
\begin{eqnarray*}
H_1(x)&=&2x_{+}^2-x_{+}-6(x-1)_{+}-2(x-2)^2_+-(x-2)_{+},\\
H_2(x)&=&-4x^2_{+}+4x_{+}+4(x-1)^2_{+}+4(x-1)_{+}.
\end{eqnarray*}
The Toeplitz or block Toeplitz structure also allows one to compute the matrix vector product with the cost $\mathcal{O}(N\log N)$ \cite{Chan:07}. Then the iterative method works efficiently.

\subsection{Time tempered fractional model}
In this subsection, we take the following time tempered fractional equation, defined on $\Omega\times (0,T]$, as a model:
\begin{eqnarray}\label{time_tempered_eq}
\frac{\partial }{\partial t}u(x,t)=K_{\gamma}\, {}_0\mathbb{D}_t^{1-\gamma,\lambda}\frac{\partial ^2}{\partial x^2}u(x,t)-\lambda u(x,t)
\end{eqnarray}
with  the initial condition $u(0,x)=g(x)$ and subject to the homogeneous Dirichlet boundary conditions $u(a,t)=0=u(b,t)$, where $ K_\gamma>0$ and $ 0<\gamma <1$.
It  can be regarded as the special case of the backward fractional Feynman-Kac equation \cite[eq. 20]{Carmi:10} with $pU=\lambda$. Using the techniques developed in \cite[Appendix]{Deng:15}, Eq. (\ref{time_tempered_eq}) can be rewritten as
\begin{eqnarray}\label{model2eq}
{}_0^C\mathbb{D}_t^{\gamma,\lambda}u(x,t)=K_\gamma \frac{\partial ^2}{\partial x^2}u(x,t).
\end{eqnarray}

\subsubsection{Model analysis}

By (\ref{equationee1}), one has  ${}_a^C\mathbb{D}_t^{\gamma,\lambda}u(x,t)={}_a\mathbb{D}_t^{\gamma,\lambda}u(x,t)-\frac{e^{-\lambda t}t^{-\gamma}}{\Gamma(1-\gamma)} g$; then combining Theorem \ref{theorem2} ($0<\mu=\frac{\gamma}{2}<\frac{1}{2}$) and Corollary \ref{corollaryeq35}, the variational method similar to \cite{Li:09, Li:10} can be developed to solve this equation.  But  the cost is high for the tempered case, and the regularity of $u(x,t)$ w.r.t. $ t$ is low. So, here we use a line method (given in (\ref{numericequ})) instead.  The finite element spatial discretization of (\ref{model2eq}) can be given as: find $u_h: [0,T]\to S_h, S_h\subset H_0^1(\Omega)$ such that
\begin{eqnarray} \label{semidiscrete}
\left({}_0^C\mathbb{D}_t^{\gamma,\lambda}u_h, v\right)=-K_\gamma \left(u^{\prime}_h, v^{\prime}\right) \quad \forall v\in S_h \times C([0,T]).
\end{eqnarray}
with $u_h(0)=g_h={\bf P}_h g\in S_h$, where $ {\bf P}_h g$ denotes the $L^2$ projection.

\begin{theorem}
 For $0<\gamma<1$, the space semi-discrete scheme (\ref{semidiscrete}) is unconditionally stable, and there
are
\begin{eqnarray}
&&\lambda\int_0^t\left|u_h\right|_{H^1(\Omega)}^2ds
+\frac{1}{2}\left|u_h(t)\right|_{H^1(\Omega)}^2 \le  \frac{1}{2}\left|g_h\right|^2_{H^1(\Omega)},\label{evolutioneq4}\\
&&~~\left\|u(t)\right\|_{L^2(\Omega)}\stackrel{<}{\sim}\left\|g_h\right\|_{L^2(\Omega)}+t^{\gamma}\left|g_h\right|^2_{H^1(\Omega)}.\label{evolutioneq5}
\end{eqnarray}
\end{theorem}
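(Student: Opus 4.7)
The plan is to test the semi-discrete weak formulation (\ref{semidiscrete}) against a carefully chosen $v$ and reduce both estimates to the positivity of the tempered fractional-integral bilinear form furnished by Theorem \ref{Theoremeq23}.

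For (\ref{evolutioneq4}), I would take $v=(\partial_s+\lambda)u_h$. Since $u_h(\cdot,s)\in S_h$ and $S_h$ is finite dimensional, $\partial_s u_h$ and $\lambda u_h$ both lie in $S_h$, so the test is admissible; and because $\partial_x$ and $\partial_s$ commute, $v'=(\partial_s+\lambda)u_h'$, which turns the bilinear side into
\begin{equation*}
-K_\gamma\bigl(u_h',(\partial_s+\lambda)u_h'\bigr)=-\frac{K_\gamma}{2}\frac{d}{ds}|u_h|^2_{H^1(\Omega)}-K_\gamma\lambda|u_h|^2_{H^1(\Omega)}.
\end{equation*}
On the Caputo side, the identity ${}_0^C\mathbb{D}_s^{\gamma,\lambda}u_h={}_0\mathbb{D}_s^{-(1-\gamma),\lambda}(\partial_s+\lambda)u_h$, read from Definition \ref{definition:3}, puts the inner product in the shape $\bigl({}_0\mathbb{D}_s^{-(1-\gamma),\lambda}F,F\bigr)_{L^2(\Omega)}$ with $F:=(\partial_s+\lambda)u_h$.

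Next I integrate the resulting identity in $s$ over $(0,t)$ and swap the spatial and temporal integrations; then for each fixed $x$, Theorem \ref{Theoremeq23} applied in the $s$-variable (with $\mu=1-\gamma\in(0,1)$) yields
\begin{equation*}
\bigl({}_0\mathbb{D}_s^{-(1-\gamma),\lambda}F,F\bigr)_{L^2(0,t)}\;\sim\;\bigl\|{}_0\mathbb{D}_s^{-(1-\gamma)/2,\lambda}F\bigr\|^2_{L^2(0,t)}\;\ge\;0,
\end{equation*}
so $\int_0^t({}_0^C\mathbb{D}_s^{\gamma,\lambda}u_h,(\partial_s+\lambda)u_h)\,ds\ge 0$. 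Combined with the integrated time-identity from the previous step and a division by $K_\gamma$, this delivers (\ref{evolutioneq4}).

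For (\ref{evolutioneq5}), I would invert the Caputo derivative by applying ${}_0\mathbb{D}_t^{-\gamma,\lambda}$ to the equation; in weak form this reads $(u_h(t),v)=e^{-\lambda t}(g_h,v)-K_\gamma({}_0\mathbb{D}_t^{-\gamma,\lambda}u_h'(t),v')$ for every $v\in S_h$. Testing with $v=u_h(t)$ and invoking Cauchy–Schwarz gives
\begin{equation*}
\|u_h(t)\|^2_{L^2(\Omega)}\le\|g_h\|_{L^2(\Omega)}\|u_h(t)\|_{L^2(\Omega)}+K_\gamma\|{}_0\mathbb{D}_t^{-\gamma,\lambda}u_h'(t)\|_{L^2(\Omega)}|u_h(t)|_{H^1(\Omega)}.
\end{equation*}
Using (\ref{evolutioneq4}) to bound $|u_h(s)|_{H^1}\le|g_h|_{H^1}$ uniformly in $s\in[0,t]$, the fractional integral is controlled directly by $\frac{t^\gamma}{\Gamma(\gamma+1)}|g_h|_{H^1}$, and a Young-type inequality absorbs the $\|g_h\|\|u_h\|$ cross term, producing the desired estimate.

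The main obstacle I anticipate is the positivity step for (\ref{evolutioneq4}): a pointwise-in-$s$ sign for $({}_0\mathbb{D}_s^{-(1-\gamma),\lambda}F,F)_{L^2(\Omega)}$ is not generally available, so one must integrate in time first and then invoke Theorem \ref{Theoremeq23} slice-by-slice in $x$. A secondary technical point is justifying the commutation of $\partial_x$ with $\partial_s$ when forming $v'$, which is routine because $S_h$ is finite dimensional with basis functions of fixed spatial support.
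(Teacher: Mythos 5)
Your proof of (\ref{evolutioneq4}) is essentially the paper's own argument: the same test function $v={\rm D}_t^1u_h=(\partial_t+\lambda)u_h$, the same identity $\int_0^t\bigl(u_h',({\rm D}_s^1u_h)'\bigr)ds=\lambda\int_0^t|u_h|^2_{H^1}ds+\tfrac12\bigl(|u_h(t)|^2_{H^1}-|g_h|^2_{H^1}\bigr)$, and the same appeal to Theorem \ref{Theoremeq23} with $\mu=1-\gamma$ after rewriting the Caputo term as ${}_0\mathbb{D}_s^{-(1-\gamma),\lambda}{\rm D}_s^1u_h$; your explicit remark that positivity is only available after integrating in time and applying Theorem \ref{Theoremeq23} slice-by-slice in $x$ is exactly the (tacit) Fubini step the paper performs. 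For (\ref{evolutioneq5}) you genuinely diverge: the paper writes $u_h(t)-e^{-\lambda t}g_h={}_0\mathbb{D}_t^{-\frac{1+\gamma}{2},\lambda}{}_0\mathbb{D}_t^{-\frac{1-\gamma}{2},\lambda}{\rm D}_t^1u_h$ via the semigroup property (\ref{semigroup}), applies Cauchy--Schwarz in time, and recycles the quantity $\int_0^t\|{}_0\mathbb{D}_s^{-\frac{1-\gamma}{2},\lambda}{\rm D}_s^1u_h\|^2_{L^2(\Omega)}ds$ already controlled by $|g_h|^2_{H^1}$ in the coercivity step; you instead apply the full inverse ${}_0\mathbb{D}_t^{-\gamma,\lambda}$ to the equation, test the integrated identity with $u_h(t)$, and bound the resulting convolution by Minkowski's inequality together with the uniform bound $|u_h(s)|_{H^1}\le|g_h|_{H^1}$ from (\ref{evolutioneq4}). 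Both routes are valid; yours is more elementary (no fractional-order splitting, only $L^1$--$L^2$ convolution bounds and Young's inequality), while the paper's re-uses intermediate coercivity information and thus stays entirely within the variational machinery of Section~3. Note that both derivations actually yield $\|u_h(t)\|_{L^2(\Omega)}\stackrel{<}{\sim}\|g_h\|_{L^2(\Omega)}+t^{\gamma/2}|g_h|_{H^1(\Omega)}$ rather than the literal right-hand side of (\ref{evolutioneq5}); this discrepancy is inherited from the paper's statement and is not a defect of your argument.
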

\begin{proof}
Choose $v={\rm D}_t^1u_h=\left(\frac{\partial}{\partial t}+\lambda\right)u_h$ in (\ref {semidiscrete}), i.e.,
\begin{eqnarray}
\left({}_0^C\mathbb{D}_t^{\gamma,\lambda}u_h,\,{\rm D}_t^1u_h\right)=-K_\gamma \left(u_h^{\prime},\,\left({\rm D}_t^1u_h\right)^{\prime}\right).
\end{eqnarray}
Notice that
\begin{eqnarray}\label{evolutioneq1}
~~\int_0^T\left(u_h^{\prime},\,\left({\rm D}_t^1u_h\right)^{\prime}\right)dt=\lambda\int_0^T\left|u_h\right|_{H^1(\Omega)}^2dt
+\frac{1}{2}\left(\left|u_h(T)\right|_{H^1(\Omega)}^2-\left|g_h\right|^2_{H^1(\Omega)}\right).
\end{eqnarray}
By Definition \ref{definition:3} and Theorem \ref{Theoremeq23}, one has
\begin{eqnarray}\label{evolutioneq2}
&&\int_0^T\left({}_0^C\mathbb{D}_t^{\gamma,\lambda}u_h,\,{\rm D}_t^1u_h\right)dt=\int_0^T\left({}_0\mathbb{D}_t^{-(1-\gamma),\lambda}\,{\rm D_t^1}u_h,\,{\rm D}_t^1u_h\right)dt\\
&&\ge \int_0^T\frac{\sin(\frac{\pi\gamma}{2})}{2\pi}\left\|{}_0\mathbb{D}_t^{-\frac{1-\gamma}{2},\lambda}\,{\rm D_t^1}u_h\right\|^2_{L^2(\Omega)}dt\ge 0.\nonumber
\end{eqnarray}
Combining  Definition \ref{definition:1}, (\ref{semigroup}), and Holder's inequality leads to
\begin{eqnarray}\label{evolutioneq31}
&&\left\|u_h(T)-e^{-\lambda t}g_h\right\|^2_{L^2(\Omega)}=\left\|{}_0\mathbb{D}_T^{-\frac{1+\gamma}{2},\lambda}{}_0\mathbb{D}_T^{-\frac{1-\gamma}{2},\lambda}\,{\rm D_t^1}u_h\right\|^2_{L^2(\Omega)}\\
&&\le \int_0^T\frac{(T-t)^{\gamma-1}}{\Gamma^2(\frac{1+\gamma}{2})}dt\int_0^T\left\|{}_0\mathbb{D}_t^{-\frac{1-\gamma}{2},\lambda}\,{\rm D_t^1}u_h\right\|^2_{L^2(\Omega)}dt\nonumber\\
&&\stackrel{<}{\sim} T^{\gamma}\int_0^T\left\|{}_0\mathbb{D}_t^{-\frac{1-\gamma}{2},\lambda}\,{\rm D_t^1}u_h\right\|^2_{L^2(\Omega)}dt.\nonumber
\end{eqnarray}
Now, replacing $T$ by $t$, then combining (\ref{evolutioneq1}) with (\ref{evolutioneq2}) results in (\ref{evolutioneq4}); and using (\ref{evolutioneq1})--(\ref{evolutioneq31}) and the triangle inequality leads to (\ref{evolutioneq5}).
\end{proof}

\subsubsection{Numerical implementation}
Let  $\left({\bf L}_h\varphi, \chi\right)=\left( \varphi^{\prime},\chi\prime\right)\,\, \forall \varphi,\chi\in S_h \times C([0,T])$. Then from (\ref{semidiscrete}), there exists
${}_0^C\mathbb{D}_t^{\gamma,\lambda}u_h=-K_\gamma {\bf L}_hu_h$,  by Laplace transform, whose formal solution can be given as
\begin{eqnarray}\label{numericequ}
u_h(x,t)=e^{-\lambda t}E_{\gamma,1}\left(-K_{\gamma}t^{\gamma}{\bf L}_h\right){\bf P}_h g,
\end{eqnarray}
where $E_{\gamma,\beta}(z)$ is the  Mittag-Leffler (ML) function \cite{Podlubny:99}, and the transform formulas $\mathscr{L}[{}_0^C\mathbb{D}_t^{\gamma,\lambda}u_h](s)=(s+\lambda)^{\gamma}\mathscr{L}[u_h](s)-(s+\lambda)^{\gamma-1}u_h(0)$ and
$\mathscr{L}[t^{\beta-1}E_{\gamma,\beta}(\pm p t^{\gamma})](s)=\frac{s^{\gamma-\beta}}{s^{\gamma}\mp p} \quad \Re(s)>\left|p\right|^{\frac{1}{\gamma}}$ are used. Exponential integrator and rational approximation have been well developed to solve the classical PDEs. In fact, because of the large storage requirement and computation cost of fractional operators, developing these type of algorithms for fractional PDEs makes more sense.

Denote
\begin{eqnarray}
{\rm \Pi}^{\gamma,\beta}v=t^{\beta-1}E_{\gamma,\beta}(-K_{\gamma}t^{\gamma}{\bf L}_h)v.
\end{eqnarray}
Define a branch cut along the negative axis and note that ${\bf L}_h$ is real symmetric positive define.  By the inverse Laplace transform,  for any $v \in L^2(\Omega)$, it follows that
\begin{eqnarray}\label{eqnarrayeeq1}
&&{\rm \Pi}^{\gamma,\beta}v=\frac{1}{2\pi i}\int_{\mathcal{C}}e^{st}{s^{\gamma-\beta}}\left({s^{\gamma}+K_{\gamma}{ {\bf L}_h }}\right)^{-1}v\,ds\nonumber\\
&&~~~~~~~~=\frac{t^{\beta -1}}{2\pi i}\int_{\mathcal{C}}e^{z}{z^{\gamma-\beta}}\left({z^{\gamma}+K_{\gamma}t^{\gamma}{\bf L}_h }\right)^{-1}v\,dz, \nonumber
\end{eqnarray}
where the path $\mathcal{C}$ is a deformed
Bromwich contour enclosing the negative axis in the anticlockwise sense \cite{Schmelzer:07,Trefethen:06}.  Since the resolvent estimate $\left\|z+{\bf L}_h\right\|_{L^2(\Omega)}\le {\Theta}\left|z\right|^{-1}$ \cite[Chapter 6]{Thomee:06},  ${z^{\gamma-\beta}}\left({z^{\gamma}+K_{\gamma}{\bf L}_h}\right)^{-1}$ is analytic in $\mathbb{C}/(-\infty, 0]$ for $0<\gamma<1$ and $\beta\ge \gamma$, and it tends to zero uniformly as $|z|\to \infty$.
 Then  one can replace $e^{z}$ with the $(N_1-1,N_1)$ type   Carathe{\'e}odory-Fej{\'e}r (CF) rational approximation $r_{N_1,N_1-1}=\sum_{k=1}^{N_1} \frac{c_k}{z-z_k}$ \cite{Schmelzer:07,Trefethen:06} to obtain an approximation of the form
\begin{eqnarray}\label{eqnarrr446}
{\rm \Pi}^{\gamma,\beta}v\approx-t^{\beta-1}\sum_{k=1}^{N_1}c_kz_k^{\gamma-\beta}\left({z_k^{\gamma}+K_{\gamma}t^{\gamma}{\bf L}_h }\right)^{-1}v,
\end{eqnarray}
where the poles $\left\{z_k\right\}$ and residues $\left\{c_k\right\}$ can be computed only once  and stored.
 To implement (\ref{eqnarrr446}),  one can solve $N_1$ elliptic problems: find $\hat{v}(z_k)=\left({z_k^{\gamma}+K_{\gamma}t^{\gamma}{\bf L}_h }\right)^{-1}v\in S_h$, such that
 \begin{eqnarray}\label{eqnarrayeeee}
 \Big(\left({z_k^{\gamma}+K_{\gamma}t^{\gamma}{\bf L}_h }\right)\hat{v}(z_k),\chi\Big)=\left(v,\chi\right) \quad \forall \chi\in S_h
 \end{eqnarray}
 in parallel, but a clever way is to cut  them to half by using the  complex conjugate nature of $(c_k,z_{k})$; and the standard preconditioning or multi-grid techniques can be applied to speed up the process. Finally, for any $ \mathcal{C}\subset \mathbb{C}/(-\infty, 0]$, one has
\begin{eqnarray}\label{boundeq}
\left\|{z^{\gamma-\beta}}\left({z^{\gamma}+K_{\gamma}t^{\gamma}{\bf L}_h}\right)^{-1}v\right\|_{L^2(\Omega)}\le \Theta \left|z\right|^{-\beta}\left\| v\right\|_{L^2(\Omega)}.
\end{eqnarray}
Then  by \cite[Theorem 5.2]{Schmelzer:07}, the theoretic convergence rate of the CF approximation  is  geometric. Our numerical experiments show that it always gives excellent results for $\beta \le 4$ while $N$ equals $14$ to $16$.

More generally, if the source term $f(x,t)\not=0$  in (\ref{model2eq}), the semi-discrete equation will become
\begin{eqnarray}
{}_0^C\mathbb{D}_t^{\gamma,\lambda}u_h=-K_\gamma {\bf L}_hu_h+{\bf P}_h f ,
\end{eqnarray}
and  $u_h(t)$  will be given as
\begin{eqnarray}\label{solutioneq}
u_h(x,t)=e^{-\lambda t}E_{\gamma,1}(-K_{\gamma}t^{\gamma} {\bf L}_h){\bf P}_h g+{\rm \Pi}^{\gamma}{\bf P }_h f,
\end{eqnarray}
where
\begin{eqnarray}\label{eqnarraeq}
{\rm \Pi}^{\gamma}{\bf P }_h f=\int_0^t(t-s)^{\gamma-1}E_{\gamma,\gamma}(-K_{\gamma}(t-s)^{\gamma}{\bf L}_h)e^{\lambda(s-t)}{\bf P }_h fds.
\end{eqnarray}
By the equation (1.100) in \cite[ p. 25]{Podlubny:99}, for $\nu>0$ and $\beta>0$,  it follows that
\begin{eqnarray} \label{eqnarra3}
&&\int_a^t (t-s)^{\beta-1}E_{\gamma,\beta}(-q (t-s)^{\gamma})(t-a)^{\nu -1}ds\\
&&={\Gamma(\nu)}(t-a)^{\beta+\nu-1}E_{\gamma,\beta+\nu}(-q (t-a)^{\gamma}).\nonumber
\end{eqnarray}
Therefore, if  $f(x,t)$ has the form $e^{-\lambda t}\left(t^{\nu_1-1}g_1(x)+t^{\nu_2-1}g_2(x)+\cdots\right)$,  one can remove the integral symbol in (\ref{eqnarra3}) exactly. Otherwise if $f(x,t) $ is piecewise smooth w.r.t. $ t$, the   subdivision  low-order interpolation can be used, i.e., letting $0=t_0<\cdots<t_{M-1}<t_M=t$ be the partition of $\left[0,t\right]$, and
\begin{eqnarray}
h_k(x,s)=\sum_{l=0}^{m_k}c_{m_k,l}^L(s-t_k)^l=\sum_{l=0}^{m_k}c_{m_k,l}^R(s-t_{k+1})^l
\end{eqnarray}
be the $ m_k$ degree interpolation of $e^{\lambda(s-t)}{\bf P}_h f$ in interval $[t_k,t_{k+1}]$, then
\begin{eqnarray}
{\rm \Pi}^{\gamma}{\bf P }_h f
&\approx&\sum_{k=0}^{M-1}\left(\int_{t_k}^t-\int_{t_{k+1}}^t\right)(t-s)^{\gamma-1}E_{\gamma,\gamma}(-K_{\gamma}(t-s)^{\gamma}{\bf L}_h)e^{\lambda(s-t)}h_k(s)ds\nonumber\\
&=&\sum_{k=0}^{M-1}\sum_{l=0}^{\max\{m_k,m_{k-1}\}}\Gamma(l+1)(t-t_k)^{\gamma+l}E_{\gamma,\gamma+l+1}\left(-K_\gamma(t-t_k)^\gamma{\bf L}_h\right)\left(c_{m_k,l}^L-c_{m_{k-1},l}^R\right)\nonumber\\
&&+\sum_{l=0}^{m_0}\Gamma(l+1)t^{\gamma+l}E_{\gamma,\gamma+l+1}\left(-K_\gamma t^\gamma {\bf L}_h\right)c_{m_0,l}^L,\nonumber
\end{eqnarray}
where we take $ c_{m_k,l}^L=0 $ for $l>m_k$ and $ c_{m_{k-1},l}^R=0 $ for $l>m_{k-1}$, respectively. Then every
$(t-t_k)^{\gamma+l}E_{\gamma,\gamma+l+1}\left(-K_\gamma(t-t_k)^\gamma{\bf L}_h\right)v$ can be handled with the same $(c_k,z_k)$
 by (\ref{eqnarrr446}), and the  final error will be dominated by the  interpolation of  $e^{\lambda(s-t)}{\bf P}_h f$ (which can be controlled in advance, even to obtain the partition  and interpolation points adaptively), superior to the finite difference method or the predictor-corrector method, which  usually depends on the regularity of the exact solution $u_h(t)$ or ${}_0^C\mathbb{D}_t^{\gamma,\lambda}u_h$ \cite{Deng:15}.

The  high-order element or  the composite spectral interpolation (even the best uniform approximation) can be used to get a fast and better approximation  if ${\bf P}_h f$ has a good regularity. For example, one can choose $\xi_{M,j}=\cos\frac{(2j-1)\pi}{2M},j=0,1,\cdots,M$, the Chebshev Gauss nodes on $[-1,1]$, and let
$\left\{s_{M,j}; s_{M,j}=\frac{t}{2}\left(1+\xi_{M,j}\right)\right\}$ be the  interpolation points, then the Chebshev Lagrange interpolation of $e^{\lambda(s-t)}{\bf P}_h f$ on the whole interval $[0,t]$ can be given as
\begin{eqnarray}   \label{eqnareee}                                                                                                                                                                                                                                            ~~~h(x,s)=\sum_{j=0}^{M} e^{\lambda(s_{M,j}-t)}{\bf P}_h f(x,s_{M,j})L_{M,j}(s)=\sum_{j=0}^{M} c_{M,j}\, s^j.
\end{eqnarray}
 Using (\ref{solutioneq}) and (\ref{eqnarra3}), $u_h(x,t)$ can be approximated by
\begin{eqnarray}\label{eqeeter4}
&&{u_h}(x,t)\approx e^{-\lambda t}E_{\gamma,1}(-K_{\gamma}t^{\gamma} {\bf L}_h){\bf P}_h g+{\rm \Pi}_{high}^{\gamma}{\bf P}_h f,
\end{eqnarray}
where
\begin{eqnarray}
{\rm \Pi}_{high}^{\gamma}{\bf P}_h f=\sum_{j=0}^{M}\Gamma(j+1)t^{\gamma+j}E_{\gamma,\gamma+j+1}(-K_{\gamma}t^{\gamma}{\bf L}_h)c_{M,j}.
\end{eqnarray}
But to handle ${\rm \Pi}^{\gamma,\gamma+j+1}v$ for big $\beta=\gamma+j+1$ and small $N_1$, the CF approximation  is distorted  due to  the fact that  ${z^{\gamma-\beta}}\left({z^{\gamma}+K_{\gamma}{\bf L}_h}\right)^{-1}$ decays so fast that the left-most nodes make a negligible contribution (the same happens when approximating the gamma function \cite[Fig. 4.3]{Schmelzer:07}). One can  overcome this by fine-tuning the integral in a manner specific to $\gamma,\beta$. For  $\sigma>0$,  let $\mathcal{C}$ be the simplest  parabolic contour (PC) $z(p)=\sigma(ip+1)^2, p\in\mathbb{R}$ given in \cite{Garrappa:15}. The fast decay of $\left|e^{z(p)}\right|$ for $\left|p\right|\to \infty$ allows one to produce an approximation of ${\rm \Pi}^{\gamma,\beta}v$ as
\begin{eqnarray}
&&{\rm \Pi}^{\gamma,\beta}v=\frac{t^{\beta -1}}{2\pi i}\int_{-\infty}^{\infty}z^{\prime}(p)e^{z(p)}{z^{\gamma-\beta}}(p)\left({z^{\gamma}(p)+K_{\gamma}t^{\gamma}{\bf L}_h }\right)^{-1}v\,dp\nonumber\\
&&\approx\frac{t^{\beta -1}\tau_1}{2\pi i}\sum_{k=-N_1}^{N_1}z_k^{\prime}e^{z_k}{z_k^{\gamma-\beta}}\left({z_k^{\gamma}+K_{\gamma}t^{\gamma}{\bf L}_h }\right)^{-1}v\nonumber\\
&&=\frac{t^{\beta -1}\tau_1}{2\pi i}\sum_{k=0}^{N_1}\nu_kz_k^{\prime}e^{z_k}{z_k^{\gamma-\beta}}\left({z_k^{\gamma}+K_{\gamma}t^{\gamma}{\bf L}_h }\right)^{-1}v\stackrel{\triangle}{=}{\rm \Pi}_{N_1}^{\gamma,\beta}v,\nonumber
\end{eqnarray}
where $z_k=z(p_k), z^{\prime}_k=z^{\prime}(p_k)$, $p_k=k\cdot\tau_1$, and $\nu_k=\left\{\begin{array}{ll}1&k=0\\ 2&k\ge 1\end{array}\right.$.
\begin{lemma}
For any  given $\sigma>0$,  the PC discretaization $ {\rm \Pi}_{N_1}^{\gamma,\beta}v$ is stable for $\beta\ge \frac{1}{2}$ and $\tau_1\le1$, i.e.,
\begin{equation}
\left\|{\rm \Pi}_{N_1}^{\gamma,\beta}v\right\|_{L^2(\Omega)}\stackrel{<}{\sim}\frac{t^{\beta -1}}{2\pi }e^{\sigma}\sigma^{-\beta+1}\left\|v\right\|_{L^2(\Omega)}.
\end{equation}
\end{lemma}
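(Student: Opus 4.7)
My plan is to carry out a direct term-by-term estimate of the discretized contour sum, leveraging the resolvent bound (\ref{boundeq}) and explicit formulas for the parabolic contour $z(p)=\sigma(ip+1)^{2}$. The overall strategy is: dominate each summand by a Gaussian times a power of $|z_k|$, use $\beta\ge 1/2$ to control the power factor, and then recognize the remaining sum as a trapezoidal Riemann sum for a Gaussian integral.

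First I would compute, from $z(p)=\sigma(ip+1)^2=\sigma(1-p^2)+2i\sigma p$, the three quantities
\[
|z(p)|=\sigma(1+p^2),\qquad |z'(p)|=2\sigma\sqrt{1+p^2},\qquad |e^{z(p)}|=e^{\sigma(1-p^2)}.
\]
Applying the triangle inequality to the defining sum of $\Pi_{N_1}^{\gamma,\beta}v$ and invoking (\ref{boundeq}) with $z=z_k$ yields
\[
\bigl\|\Pi_{N_1}^{\gamma,\beta}v\bigr\|_{L^2(\Omega)}\le\frac{t^{\beta-1}\tau_1}{2\pi}\sum_{k=0}^{N_1}\nu_k\,|z_k'|\,|e^{z_k}|\,\Theta\,|z_k|^{-\beta}\,\|v\|_{L^2(\Omega)}.
\]
Plugging in the three formulas, the $k$-th summand becomes $2\Theta\,\sigma^{1-\beta}e^{\sigma}\,e^{-\sigma p_k^{2}}(1+p_k^{2})^{1/2-\beta}$.

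Next, since $\beta\ge 1/2$, the factor $(1+p_k^2)^{1/2-\beta}\le 1$ for every $k$, which removes all $p_k$-dependence outside of the Gaussian $e^{-\sigma p_k^{2}}$. Collecting constants,
\[
\bigl\|\Pi_{N_1}^{\gamma,\beta}v\bigr\|_{L^2(\Omega)}\le\frac{t^{\beta-1}}{2\pi}\,2\Theta\,\sigma^{1-\beta}e^{\sigma}\,\Bigl(\tau_1\sum_{k=0}^{N_1}\nu_k\,e^{-\sigma(k\tau_1)^{2}}\Bigr)\,\|v\|_{L^2(\Omega)}.
\]
The remaining task is to show the bracketed discrete sum is $O(1)$ uniformly in $N_1$ and $\tau_1\in(0,1]$. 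By symmetry, $\tau_1\sum_{k=0}^{N_1}\nu_k e^{-\sigma(k\tau_1)^{2}}\le\tau_1\sum_{k\in\mathbb{Z}}e^{-\sigma(k\tau_1)^{2}}$, and a standard integral-comparison for the decreasing positive function $p\mapsto e^{-\sigma p^2}$ gives
\[
\tau_1\sum_{k\in\mathbb{Z}}e^{-\sigma(k\tau_1)^{2}}\le\tau_1+2\int_0^{\infty}e^{-\sigma p^{2}}\,dp=\tau_1+\sqrt{\pi/\sigma}\le 1+\sqrt{\pi/\sigma},
\]
for $\tau_1\le 1$, which is a constant (depending only on $\sigma$) that is absorbed by $\stackrel{<}{\sim}$. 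Combining with the previous display yields the stated estimate.

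The only step requiring real care is the Riemann-sum bound: one must ensure the constant does not degenerate as $N_1\to\infty$ or as $\tau_1$ varies over $(0,1]$, which is why the monotone integral-comparison (equivalently, a one-line Poisson summation argument) is used rather than a naive geometric-series estimate. Everything else is algebraic manipulation on the parametrization of the parabolic contour, combined with the hypothesis $\beta\ge 1/2$ that is exactly what is needed to kill the otherwise $p$-growing factor $(1+p^2)^{1/2-\beta}$.
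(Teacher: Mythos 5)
Your proposal is correct and follows essentially the same route as the paper: triangle inequality plus the resolvent bound (\ref{boundeq}), the explicit formulas $|z(p)|=\sigma(1+p^2)$, $|z'(p)|=2\sigma\sqrt{1+p^2}$, $|e^{z(p)}|=e^{\sigma(1-p^2)}$, and an integral comparison for the resulting decreasing summand. The only cosmetic difference is that you discard the factor $(1+p_k^2)^{1/2-\beta}\le 1$ before comparing with the Gaussian integral, whereas the paper keeps it inside the integral $\int_0^\infty e^{-\sigma p^2}(1+p^2)^{-\beta+1/2}\,dp$; both give the stated bound.
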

\begin{proof} By (\ref{boundeq}), it follows that
\begin{eqnarray}
 &&\left\|{\rm \Pi}_{N_1}^{\gamma,\beta}v\right\|_{L^2(\Omega)}\le \Theta\frac{t^{\beta -1}\tau_1}{2\pi }\sum_{k=-N_1}^{N_1}\left|z_k^{\prime}e^{z_k}\right| \left|z_k\right|^{-\beta}\left\| v\right\|_{L^2(\Omega)}\\
 &&=\Theta\frac{t^{\beta -1}\tau_1}{2\pi }e^{\sigma}\sigma^{-\beta+1}\left(1+2\sum_{k=1}^{N_1}e^{-\sigma p_k^2}\left(1+p_k^2\right)^{-\beta+\frac{1}{2}}\right)\left\|v\right\|_{L^2(\Omega)}\nonumber\\
 &&\le \Theta\frac{t^{\beta -1}}{2\pi }e^{\sigma}\sigma^{-\beta+1}\left(\tau_1+2\int_0^{\infty}e^{-\sigma p^2}\left(1+p^2\right)^{-\beta+\frac{1}{2}}dp\right)\left\|v\right\|_{L^2(\Omega)},\nonumber
\end{eqnarray}
which completes the proof.
\end{proof}

 We  extend the function $z(p)$ defined for $p\in\mathbb{R}$ as analytic function in a strip $Y=\left\{p=\zeta+i\eta, \,a_{-}<\eta<a_{+}\right\}$. It is easy to check that  the neighbourhood $Z=\left\{z(p),\, p\in Y\right\}$ of  the contour $\mathcal{C}$ lies in $\mathbb{C}/(-\infty,0]$ for any $0<a_{+}<1$ and $a_{-}<0$. Then according to  the convergence  result of the trapezoidal rule for the integral over the real line (see the proof of  Theorem 5.1 in \cite[Section 5]{Trefethen:14}), for $\beta\ge\frac{1}{2}$, it holds that
 \begin{eqnarray}
 \left\|{\rm \Pi}^{\gamma,\beta}v-{\rm \Pi}_{N_1}^{\gamma,\beta}v\right\|_{L^2(\Omega)}\le\frac{M_{-}}{e^{-2\pi a_{-}/\tau_1}-1}+\frac{M_{+}}{e^{2\pi a_{+}/\tau_1}-1}+TE,
 \end{eqnarray}
 where
 \begin{eqnarray*}
 &&M_{\pm}=\Theta\frac{t^{\beta -1}}{\pi}\int_{-\infty}^{\infty}\left|z^{\prime}\left(\zeta+ia_{\pm}\right)e^{z\left(\zeta+ia_{\pm}\right)}\right| \left|z\left(\zeta+ia_{\pm}\right)\right|^{-\beta}\left\|v\right\|_{L^2(\Omega)}d\zeta\\
  &&\le\Theta\frac{2t^{\beta-1}\sigma^{-\beta+1}}{\pi}e^{\sigma
 (1-a_{\pm})^2}\left(\left((1-a_{\pm})^2\right)^{-\beta+\frac{1}{2}}\int_{0}^1e^{-\sigma\zeta^2}d\zeta
 +\int_{1}^{\infty}e^{-\sigma\zeta^2}d\zeta\right)\left\|v\right\|_{L^2(\Omega)}\nonumber\\
 &&\le\Theta\frac{2t^{\beta-1}\sigma^{-\beta+1}}{\pi}e^{\sigma(1-a_{\pm})^2}\left((1-a_{\pm})^{-2\beta+1}
 +\frac{\sqrt{\pi}}{2\sigma^{1/2}}\right)\left\|v\right\|_{L^2(\Omega)},\nonumber
 \end{eqnarray*}
 and
 \begin{eqnarray*}
 &&TE= \Theta\frac{t^{\beta -1}\tau_1}{\pi }\sum_{k\ge N_1+1}\left|z_k^{\prime}e^{z_k}\right| \left|z_k\right|^{-\beta}\left\| v\right\|_{L^2(\Omega)}\\
  &&\le\Theta\frac{t^{\beta -1}}{\pi }e^{\sigma}\sigma^{-\beta+1}\int_{N_1\tau_1}^{\infty}e^{-\sigma p^2}\left(1+p^2\right)^{-\beta+\frac{1}{2}}dp\left\|v\right\|_{L^2(\Omega)}\\
  &&\le\Theta\frac{t^{\beta -1}}{2\sqrt{\pi} }e^{\sigma}\sigma^{-\beta+1/2}e^{-\sigma(N_1\tau_1)^2}\left\|v\right\|_{L^2(\Omega)}.
 \end{eqnarray*}

 For any $\sigma\tau_1<\pi$, by choosing $a_{-}=1-\frac{\pi}{\sigma \tau_1}<0$, it yields $e^{\sigma(1-a_{-})^2+\frac{2\pi a_{-}}{\tau_1}}\le e^{-\frac{\pi^2}{\sigma\tau_1^2}+\frac{2\pi}{\tau_1}}$.  One can  balance the orders of magnitude of three error terms to estimate the optimal truncation point, i.e.,
 \begin{eqnarray}
e^{-\frac{\pi^2}{\sigma\tau_1^2}+\frac{2\pi}{\tau_1}}\simeq e^{\sigma(1-a_{+})^2-\frac{2\pi a_{+}}{\tau_1}}(1-a_{+})^{-2\beta+1}\simeq e^{\sigma-\sigma(N_1\tau_1)^2}.
 \end{eqnarray}
 Noting that for $a_+\to 1$, it follows that
 \begin{eqnarray*}
e^{\sigma(1-a_{+})^2-\frac{2\pi a_{+}}{\tau_1}}(1-a_{+})^{-2\beta+1}\sim  e^{-\frac{2\pi a_{+}}{\tau_1}}(1-a_{+})^{-2\beta+3}\stackrel{<}{\sim}  e^{-\frac{2\pi a_{+}}{\tau_1}}(1-a_{+})^{2(\gamma-\beta+1)}.
 \end{eqnarray*}
So the algorithms developed in \cite[Subsection 3.2.2]{Garrappa:15} for computing the ML function can be directly used here or   after replacing the corresponding $\beta$ by $\beta+\gamma-\frac{1}{2}$; they produce good numerical results for all $\beta\ge\frac{1}{2}$.

 An alternative rational approximation could also be developed based on the Dunford-Taylor integral (DTI) representation, it holds that
\begin{eqnarray}
{\rm \Pi}^{\gamma,\beta}v=\frac{t^{\beta-1}}{2\pi i}\int_{\mathcal{C}}E_{\gamma,\beta}(-K_{\gamma}t^{\gamma}z)\left(z{\bf I}-{\bf L}_h\right)^{-1}v dz,
\end{eqnarray}
where $\mathcal{C}$ is a closed contour  enclosing the spectrum of ${\bf L}_h$, and $\hat{v}(z)=\left(z{\bf I}-{\bf L}_h\right)^{-1}v\in S_h$ can be computed by
\begin{eqnarray}
\Big(\left({z{\bf I}-{\bf L}_h }\right)\hat{v}(z),\chi\Big)=\left(v,\chi\right)\quad \forall \chi\in S_h.
\end{eqnarray}
Note that the ML function is entire. Theoretically, one can choose any sufficiently large circle $\mathcal{C}$ to gain a fast exponential convergence; see \cite[Theorem 18.1]{Trefethen:14}. In practice, it fails, due to the fast increase of $\left|E_{\gamma,\beta}(z)\right|$ for $Re(z)\to\infty$ with $ \left|\arg(z)\right|\le \nu$ and $ \nu\in\left(\frac{\pi\gamma}{2},\pi\gamma\right)$ \cite[Theorem 1.5]{Podlubny:99}. Therefore, a stable and wise way is based on \cite[Theorem 1.6]{Podlubny:99}
 \begin{eqnarray}
 \left|E_{\gamma,\beta}(z)\right|\le \frac{C}{1+\left|z\right|}, \quad \left|\arg(z)\right|\in \left[\nu,\pi\right]
 \end{eqnarray}
to select a circle $\mathcal{C}$ lying in the right half $z$-plane, not very closing to the original point (to reduce the rand error).  Thus the techniques presented in \cite[Method 1]{Hale:08} perfectly work here to produce  such a  discretization of ${\rm \Pi}^{\gamma,\beta}v $ with the number of quadrature nodes needed to obtain a specified accuracy increasing asymptotically as $\log(\sigma_{\max}/\sigma_{\min})$ \cite[Theorem 2.1]{Hale:08}, where $\sigma_{\max}$ and $\sigma_{\min}$ are the largest and smallest eigenvalue of the corresponding matrix  of ${\bf L}_h$, respectively.   Our numerical experiments show that it can  produce the same accuracy with the first two approaches, but usually a longer time is required (being the same as the observation in \cite{Burrage:12}).
\begin{remark}
 The quadrature points $\{z_k\}$ are independent of the  time, so they can be pre computed and stored only once in  the case  $ f\not= 0$. Moreover, the points $\{z_k\}$   don't depend on $\gamma$ and $\beta$ in the CF and DTI methods; but for the PC method,  they must be  pre computed for different $\gamma$ and $\beta$, at the same time the number of points is much less than the one of the DTI method. Finally, it can be noted that the methods developed in this section have no restriction in the space dimensions, and they can also be directly applied to the fractional case, such as, the Riesz derivative and the fractional Laplace operator \cite{Xu:14,Yang:11,Yang:10}, and  the general strongly elliptic operator with $0<\gamma\le \frac{1}{2}$ (for the corresponding resolvent estimate and spectral distribution, see, e.g., \cite[Chapter 6]{Thomee:06} and \cite{Zhang:15}).
\end{remark}

\section{Numerical results}
In this section, the numerical experiments are carried out to assess the computational performance and effectiveness of the numerical schemes.
In the following, we always choose $\Omega=(0,1)$ with   $N=2^J$ space partitions. All numerical experiments are run in
MATLAB 7.11 (R2010b) on a PC with Intel(R) Core (TM)i7-4510U 2.6 GHz processor
and 8.0 GB RAM. The codes to  produce the quadrature points $\{z_k\}$  in the CF, PC and DTI schemes are adapted from \cite{Trefethen:06,Garrappa:152,Hale:08}, respectively; we choose $N_1=16$ for the PC scheme and $N_1=10\cdot\lceil\log(\sigma_{\max}/\sigma_{\min})+3\rceil$ for the DTI scheme; and the parameters of the PC scheme is adaptively produced by code itself.

\begin{example} \label{example1}
Consider (\ref{model1})  with  $u(0)=u(1)=0, p=1, m(x)=q\lambda^{\alpha-1}(1-x)$ and $c(x)=0$.  The right hand term $f(x)$ is derived from the exact solution $u(x)=\left(1-x\right)^\beta-e^{\lambda x}(1-x)$.
\end{example}

We take the linear element space as $S_h$ and use the norm defined by
\begin{eqnarray*}
\left\|u\right\|_{{\alpha}/{2}}^1=\sqrt{-\left({}_{a}\mathbb{D}_x^{\alpha/2,0}{u},{}_{x}\mathbb{D}_b^{\alpha/2,0}{u}\right)}\sim \left\|u\right\|_{H^{\alpha/2}(\Omega)}.
\end{eqnarray*}
 The $\left\|\cdot\right\|_{{\alpha}/{2}}^1$ errors and convergence rates of the Galerkin and Petrov-Galerkin method are shown in Table \ref{tab:1-1}, which well confirm the theoretical prediction  (\ref{eqarrayeq}). And the corresponding $L^2$ ones are given in Table \ref{tab:1-2}.
\begin{table}\fontsize{8.5pt}{12pt}\selectfont
\begin{center}
 \caption{Numerical results {\rm ($\left\|\cdot\right\|_{{\alpha}/{2}}^1$-error)} of the Galerkin (G) and Petrov-Galerkin (P-G) for Example \ref{example1}  with $ q=0$ and $\beta=3$.}
\begin{tabular}{cc|cc|cc|cc|cc}
  \hline
  $type$  & $J$   &\multicolumn{2}{c|}{$\alpha=1.4,\lambda=3$ }  &\multicolumn{2}{c|}{$\alpha=1.4,\lambda=5$ }   & \multicolumn{2}{c|}{$\alpha=1.8,\lambda=3$ } & \multicolumn{2}{c}{$\alpha=1.8,\lambda=5$ }  \\
  \cline{3-10}
             &        & Err      & Rate                  &Err  & rate            &Err  & Rate            &Err  & Rate    \\
   \hline
                      &$6$    &2.0583e-02   & ---        & 1.9611e-01     &---        & 9.1200e-02     & ---      &  8.6035e-01  & ---     \\
  G                  &$7$    & 8.2721e-03   &  1.3151    & 7.8394e-02     & 1.3229    & 4.2439e-02     & 1.1036   & 3.9984e-01    &  1.1055\\
                     &$8$     &3.3402e-03   &  1.3083    & 3.1543e-02     & 1.3134    &1.9771e-02      &1.1020     &  1.8612e-01     &1.1032 \\
\hline
                     &$6$    & 7.6343e-03   & ---         & 1.3508e-02    & ---       &  3.4052e-02     & ---     &  6.0270e-02    & ---       \\
P-G                  &$7$    &3.0794e-03    &  1.3098     & 5.4459e-03    & 1.3106    & 1.5859e-02       & 1.1024   &  2.8046e-02      & 1.1036   \\
                     &$8$    &1.2494e-03    &  1.3014     & 2.2109e-03    & 1.3006    &  7.3922e-03   &1.1012     &  1.3074e-02    & 1.1011  \\
 \hline
\end{tabular} \label{tab:1-1}
\end{center}
\end{table}

\begin{table}\fontsize{8.5pt}{12pt}\selectfont
\begin{center}
 \caption{Numerical results {\rm ($L^2$-error)} of the Galerkin (G) and Petrov-Galerkin (P-G) for Example \ref{example1}  with $ q=2$.}
\begin{tabular}{cc|cc|cc|cc|cc}
  \hline
  $type$  & $J$   &\multicolumn{2}{c|}{$\alpha=1.4,\lambda=3,\beta=3$ }  &\multicolumn{2}{c|}{$\alpha=1.4,\lambda=5,\beta=3$ }   & \multicolumn{2}{c|}{$\alpha=1.8,\lambda=0,\beta=1.1$ } & \multicolumn{2}{c}{$\alpha=1.8,\lambda=5,\beta=1.1$ }  \\
  \cline{3-10}
             &        & Err      & Rate                  &Err  & rate            &Err  & Rate            &Err  & Rate    \\
   \hline
                      &$6$    &  4.0085e-04   & ---      & 3.9560e-03     &---        & 2.8614e-05     & ---      & 5.2145e-03      & ---     \\
  G                  &$7$    & 9.5772e-05   & 2.0654    & 9.2271e-04     & 2.1001    & 9.4361e-06     & 1.6004   & 1.2063e-03      &2.1120\\
                     &$8$     &2.3343e-05   &  2.0366    & 2.2145e-04     &  2.0589    &3.1123e-06     & 1.6002    & 2.8209e-04     &2.0964 \\
\hline
                     &$6$    &   5.8454e-04    & ---         &  1.0838e-03  & ---      & 2.8603e-05   & ---     &  4.2569e-04  & ---       \\
P-G                  &$7$    & 1.4791e-04   &   1.9825     &  2.7674e-04   &1.9695    & 9.4325e-06      & 1.6004   &1.1182e-04   &1.9286   \\
                     &$8$    & 3.7258e-05   &   1.9891     &7.0079e-05    & 1.9815    &   3.1111e-06    &1.6002     &   2.9333e-05   &  1.9306  \\
 \hline
\end{tabular} \label{tab:1-2}
\end{center}
\end{table}

A well conditional number and ¡®bunching of eigenvalues¡¯ of the matrix equation usually mean the good numerical stability and  the faster iteration convergence speed.
The continuity and  coerciveness of $A\left(\cdot,\cdot\right)$ mean the algebraic system
\begin{eqnarray}\label{eqnarreeee}
{\bf A}{\rm U}={\rm F}
\end{eqnarray}
corresponding to (\ref{weakform}) has the condition number $\mathcal{O}(2^{J\alpha})$ (i.e., $\mathcal{O}(N^{\alpha}$)). In fact,  for  any $u=\Phi_h {\rm U}, v=\Phi_h {\rm V} \in S_h$, it follows that
\begin{eqnarray*}
&&\left({\bf A}{\rm U},{\rm V}\right)_{l_2}=A\left(u,v\right)\stackrel{<}{\sim}\left\|u\right\|_{H^{\frac{\alpha}{2}}(\Omega)}\left\|v\right\|_{H^{\frac{\alpha}{2}}(\Omega)}\stackrel{<}{\sim}2^{J\alpha}\left\|u\right\|_{L^2(\Omega)}\left\|v\right\|_{L^2(\Omega)}\sim 2^{J\alpha}\left\|{\rm U}\right\|_{l_2}\left\|{\rm V}\right\|_{l_2},\nonumber\\
&&~~~\left({\bf A}{\rm U},{\rm U}\right)_{l_2}=A\left(u,u\right)
\stackrel{>}{\sim}\left\|u\right\|^2_{H^{\frac{\alpha}{2}}(\Omega)}\stackrel{>}{\sim}\left\|u\right\|_{L^2(\Omega)}^2\sim\left\|{\rm U}\right\|_{l_2}^2,
\end{eqnarray*}
where the inverse estimate and the fact that $\frac{1}{3}\left\|{\rm U}\right\|_{l_2}\le \left\|u\right\|_{L^2(\Omega)}\le \left\|{\rm U}\right\|_{l_2}$ are used. Therefore, $\left\|{\bf A}^{-1}\right\|_{l_2}\stackrel{<}{\sim}1\stackrel{<}{\sim}\left\|{\bf A}\right\|_{l_2}\stackrel{<}{\sim}2^{J\alpha}$. If  a new basis  $\Psi_h$  of $S_h$ can be chosen such that for any
 $u=\Psi_h {\rm V^{\star}}\in S_h$,  $\left\|v\right\|_{H^{\frac{\alpha}{2}}(\Omega)}\sim \left\|{\rm V^\star}\right\|_{l_2}$ with the constant independent of $J$ (i.e., Riesz basis, see \cite[p. 463]{Cohen:00}), then the corresponding algebraic matrix will have the well condition number.  To do this, we introduce the multiscale basis of $S_h$.
 Let
 \begin{eqnarray}\label{saceeqa2}
 \psi^*=\phi(2x),\quad \Psi^*_j=\{\psi^*_{j,k}; \psi^*_{j,k}=2^{\frac{j}{2}}\psi^*(2^jx-k)\}_{k=0}^{2^j-1}.
 \end{eqnarray}
  Then $\Psi^*_h=\bigcup_{j=0}^{J-1}\Psi^*_j$ form  the   multiscale wavelet basis (i.e., the Schauder hierarchical basis) of $S_h$, and it holds that $\Psi^*_h=\Phi_h{\bf W}$, where ${\bf W}$ denotes the fast wavelet transform  (FWT) matrix, which can be obtained by (\ref{sacleq1}) and (\ref{saceeqa2}) and  implemented by the FWT algorithm with the cost $\mathcal{O}(2^J)$ (see \cite[p. 433-437]{Cohen:00}).  Taking $\psi_{j,k}=\psi^*_{j,k}/\sqrt{A(\psi^*_{j,k},\psi^*_{j,k})}$ and $\Psi_j=\left\{\psi_{j,k}\right\}_{k=0}^{2^j-1}$, by \cite[Theorem 30.7 and  p. 605]{Cohen:00},
$\Psi_h=\bigcup_{j=0}^{J-1}\Psi_j$ is a Riesz basis of $S_h$, and the corresponding algebraic system under this basis can be given as the  preconditioned form of (\ref{eqnarreeee}), i.e.,
\begin{eqnarray}
\underbrace{{\bf D}{\bf W}^T{\bf A} {\bf W} {\bf D}}_{{\bf A}^\star}\underbrace{\left({\bf D}^{-1}{\bf W}^{-1} {\rm U}\right)}_{\rm U^\star}=\underbrace{{\bf D}{\bf W}^T{\rm F}}_{\rm F^\star},
\end{eqnarray}
where ${\bf A}^\star, {\rm U^\star} $ and ${\rm F^\star}$ denotes the stiffness matrix, unknown vector   and  the right term  under  $\Psi_h$, respectively, and
\begin{eqnarray*}
{\bf D}={\rm diag}\left(\underbrace{d_{0,0}}, \underbrace{d_{1,0},\cdots,d_{1,1}},\cdots,\underbrace{d_{J,0},\cdots,d_{J,2^J-1}}\right)\quad
d_{j,k}=1/\sqrt{A(\psi^*_{j,k},\psi^*_{j,k})}.
\end{eqnarray*}
The diagonal matrix ${\bf D}$   can be produced with  the cost  $\mathcal{O}(2^J)$. 
 In the process of preconditioning, ${\bf W}{\rm Q}$ and ${\bf W}^T {\rm Q}$ can be computed by the FWT with the cost  $\mathcal{O}(2^J)$ (see \cite[p.433-437]{Cohen:00} and \cite[Chapter 6]{Urban:09}); and ${\bf A} {\rm Q}$ can be implemented by the FFT with the cost $\mathcal{O}(J\cdot 2^J)$ \cite{Chan:07,Pang:12,Wang:10}, where ${\rm Q}$ denotes a vector.
The condition numbers of the algebraic system and the CPU time  of the GMRES iteration (before and after preconditioning)  are listed in Table \ref{tab:1-3}. The stopping
criterion for solving the linear systems is
$\frac{\left\|r(k)\right\|_{l_2}}{\left\|r(0)\right\|_{l_2}}\le 1e-8$, where $r(k)$ is the residual vector after $k$ iterations. We also display the spectral distribution  in Figure \ref{fig1}.

 \begin{table}\fontsize{8.5pt}{12pt}\selectfont
\begin{center}
 \caption{The condition number (Con-num) and the CPU time  of  GMRES  for Example \ref{example1} with $q=0,\,\alpha=1.7$ and $\lambda=3$.}
\begin{tabular} {c|cc|cc|cc|cc}  \hline
$J$  &\multicolumn{2}{c}{Before pre-} &\multicolumn{2}{c|}{ GMRES}   &\multicolumn{2}{c}{After Pre-}     &\multicolumn{2}{c}{GMRES }     \\\cline{2-9}
         &   Con-num &  Rate      & Iter           & CPU(s)          &Con-num    &Rate       & Iter & CPU(s)    \\ \hline

    $7$  &  2.2768e+03  &  ---   & 1.2700e+02    & 0.0960           &   1.6869   & ---        & 14.0     & 0.0113              \\
    $8$  &    7.4179e+03  &  1.7040 &  2.5500e+02   & 0.3897           &  1.7816   &  0.0788     & 14.0     & 0.0109       \\
    $9$  &     2.4135e+04  &  1.7020 & 5.1100e+02    &1.6416            &   1.8642   & 0.0654      & 15.0     & 0.0156           \\
    \hline
   \end{tabular}\label{tab:1-3}
\end{center}
\end{table}

\begin{figure}
\begin{center}
\includegraphics[width=1.2in,height=1.2in,angle=0]{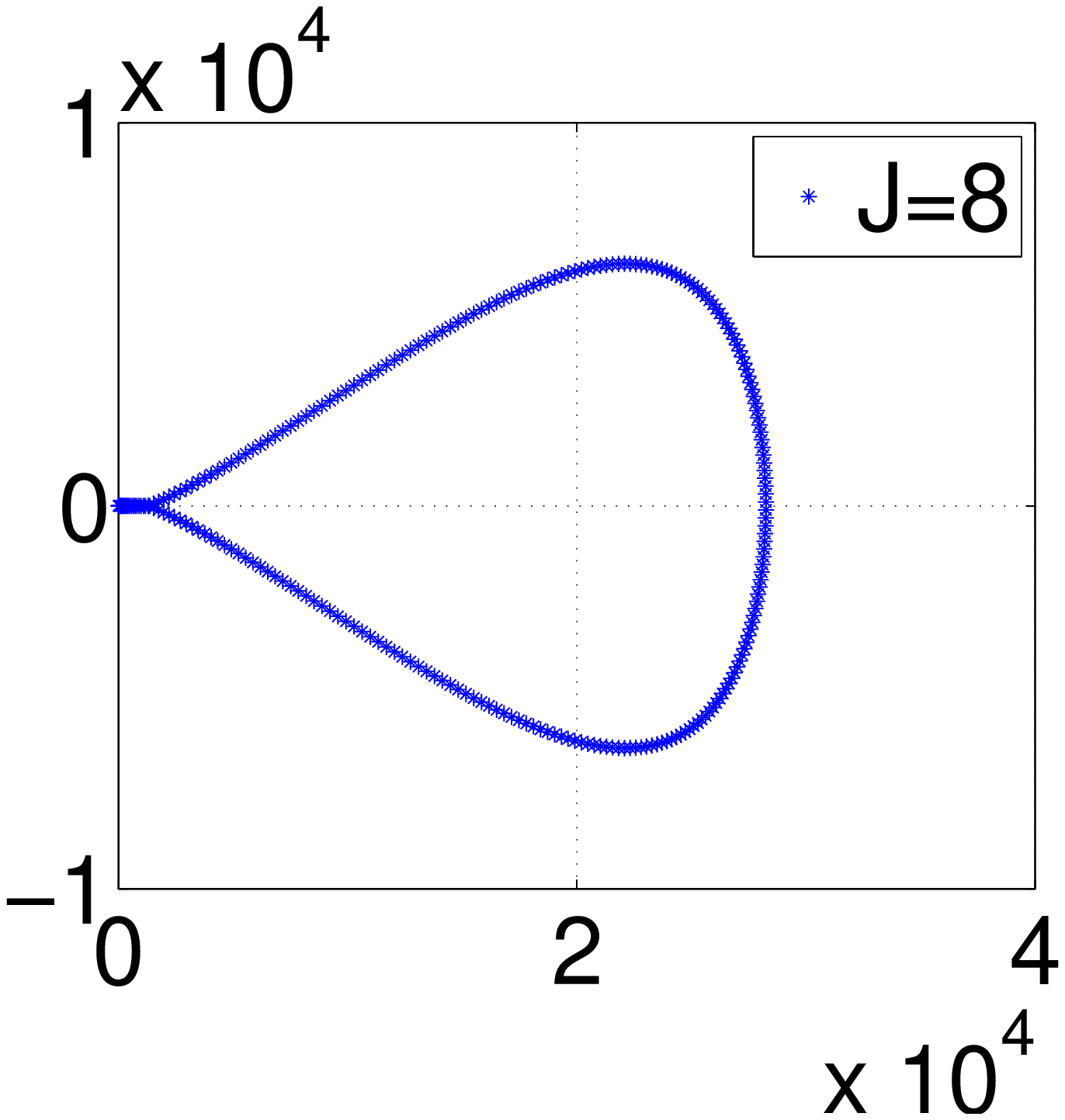}
\includegraphics[width=1.2in,height=1.2in,angle=0]{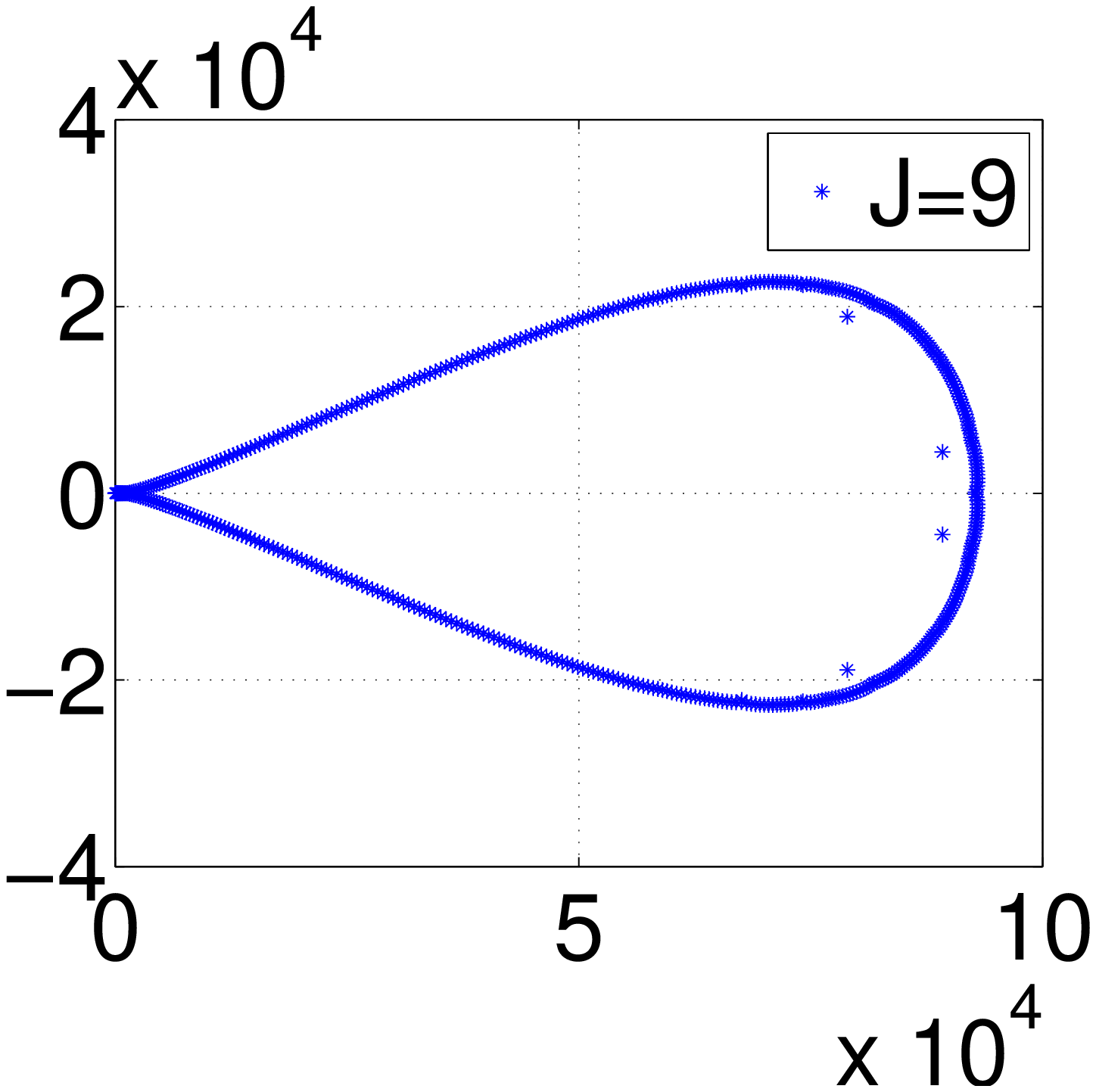}
\includegraphics[width=1.2in,height=1.2in,angle=0]{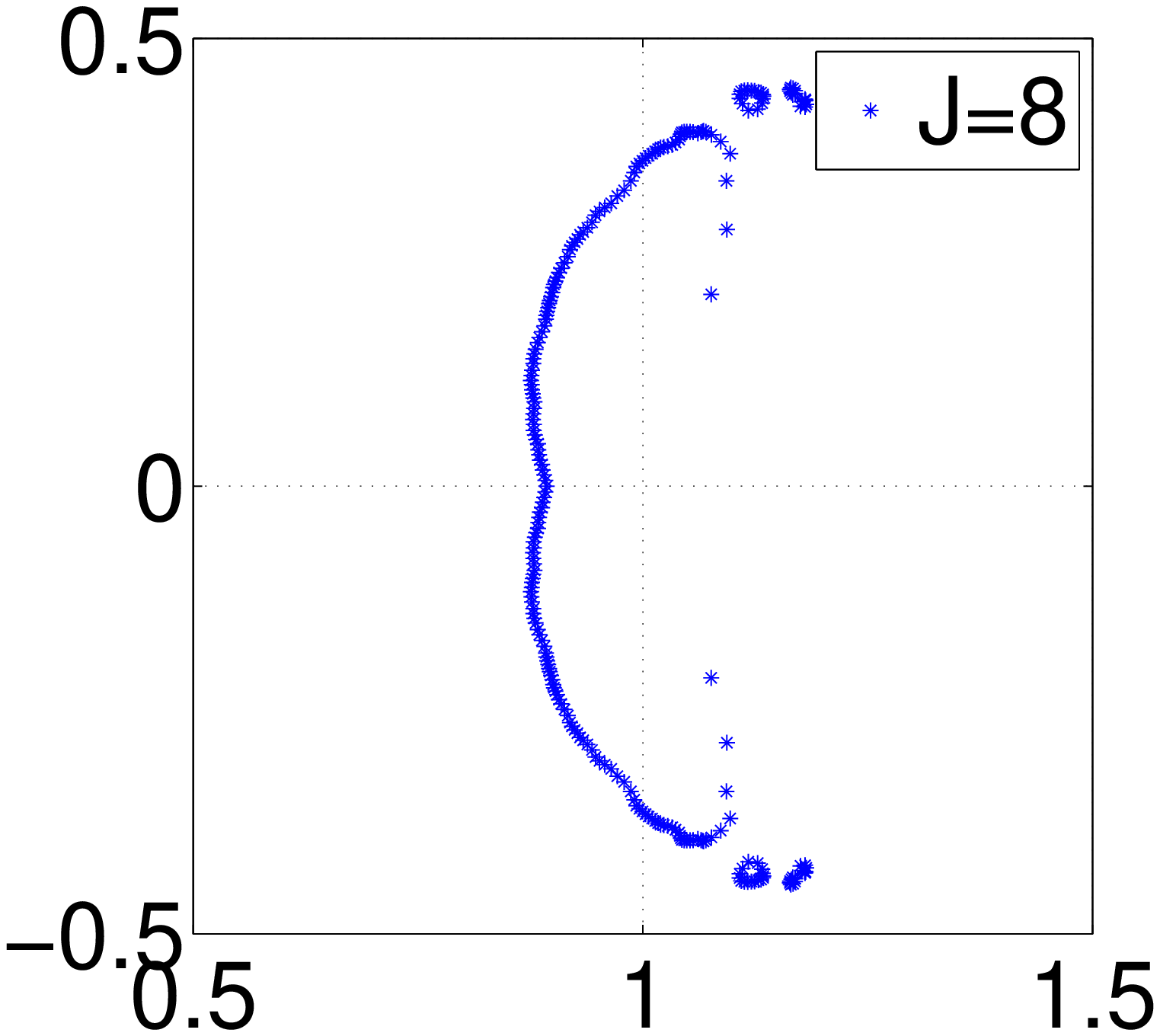}
\includegraphics[width=1.2in,height=1.2in,angle=0]{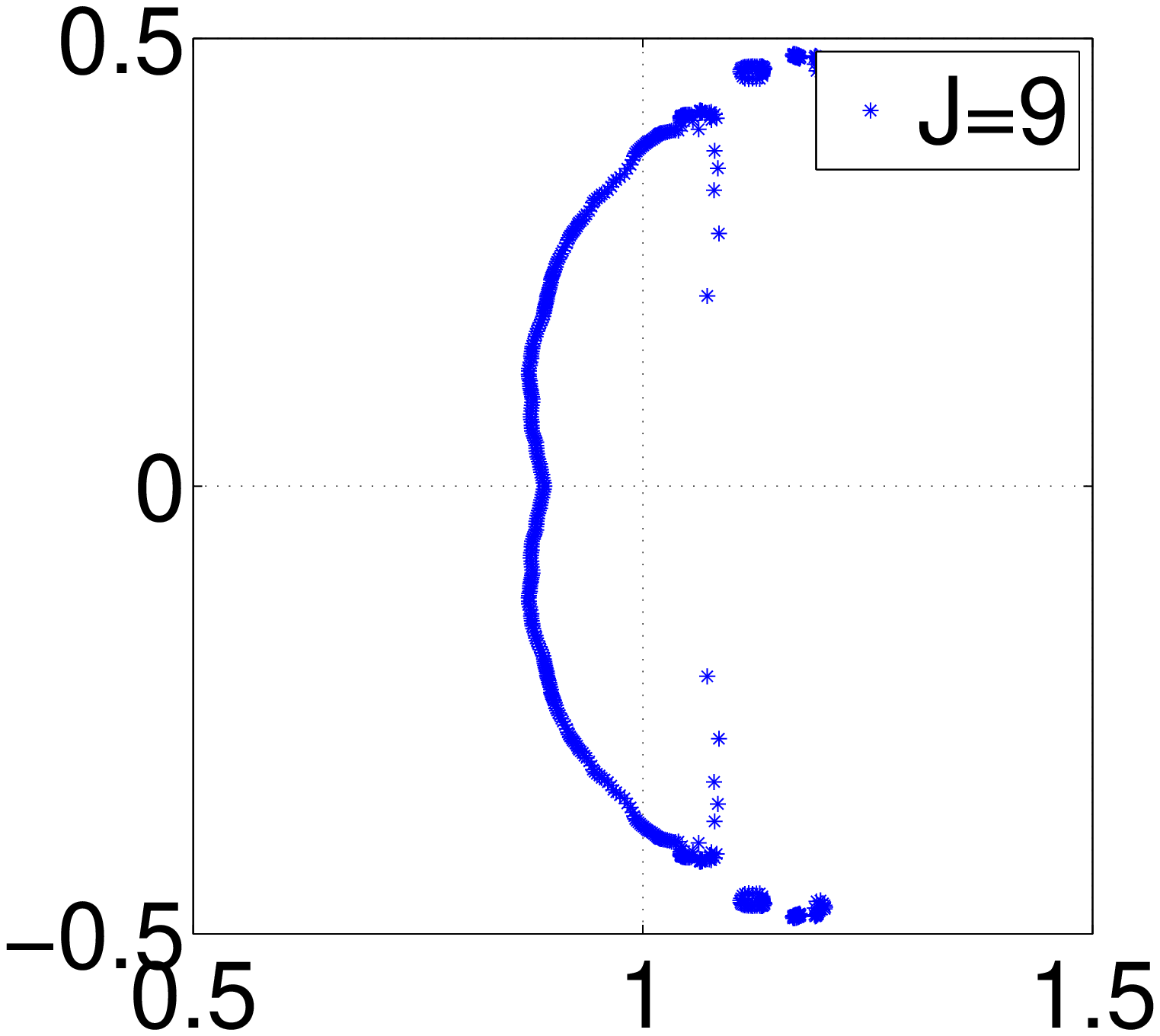}
\caption{Spectral distribution of the  matrix ${\bf A}$  (first two columns) and ${\bf D}{\bf W}^T{\bf A} {\bf W} {\bf D}$ (last two columns) for Example \ref{example1} with $q=0,\,\alpha=1.7$ and $\lambda=3$. }\label{fig1}
\end{center}
\end{figure}

\begin{example}\label{example2}
Consider the  time Caputo and  space tempered fractional  equation:
\begin{eqnarray}\label{solutioneq1}
{}_0^C{D}_t^{\gamma}u(x,t)=K_\gamma\,{}_a D_x^{\alpha,\lambda}u(x,t)+f(x,t)\quad 0<\gamma< 1,\,1<\alpha<2
\end{eqnarray}
with $u(0,t)=u(1,t)=0, K_\gamma=1$, where $g(x)$ and $f(x,t)$ are derived from the exact solution
\begin{eqnarray}
u(x,t)=\left(1+t^\beta\right)e^{-\lambda x}\left(x^3-x^2\right).
\end{eqnarray}
\end{example}

Let $S_h$ be the linear element space, and  define
\begin{eqnarray}\left({\bf L}_h\varphi, \chi\right)=-\left({}_{0}\mathbb{D}_x^{\frac{\alpha}{2},\lambda}\varphi,\,{}_{x}\mathbb{D}_1^{\frac{\alpha}{2},\lambda}\chi\right)
+\alpha\lambda^{\alpha-1}\left({}_{0}\mathbb{D}_x^{\frac{\alpha}{2},0}{\varphi},\,{}_{x}\mathbb{D}_1^{1-\frac{\alpha}{2},0}{\chi}\right)
+\lambda^{\alpha}\left(\varphi,\,\chi\right) \quad \forall \varphi,\chi\in S_h.
 \end{eqnarray}
 We use (\ref{eqnarra3}) to exactly do the integration in (\ref{eqnarraeq}), and approximate the corresponding ${\rm \Pi}^{\gamma,\beta}v$ with the CF and PC schemes, respectively. The numerical results for different $\gamma$ and $\alpha$ at $T=2$ are presented in Figure \ref{fig2}, where the straight lines (with the slope $-2$) give  a strong indication that the induced errors from the CF or PC approximation are negligible compared to the errors resulted from the finite-element discretization. Though one can only  assert that ${z^{\gamma-\beta}}\left({z^{\gamma}+K_{\gamma}{\bf L}_h}\right)^{-1}$ is analytic in $\mathbb{C}/(-\infty, 0]$ for $0<\gamma\le\frac{1}{2}$ while ${\bf L}_h$ is  positive define,  but  the  numerical experiments surprisingly show it can   done for all $\gamma$ without any problem.
\begin{figure}
\begin{center}
\includegraphics[width=2.2in,height=1.4in,angle=0]{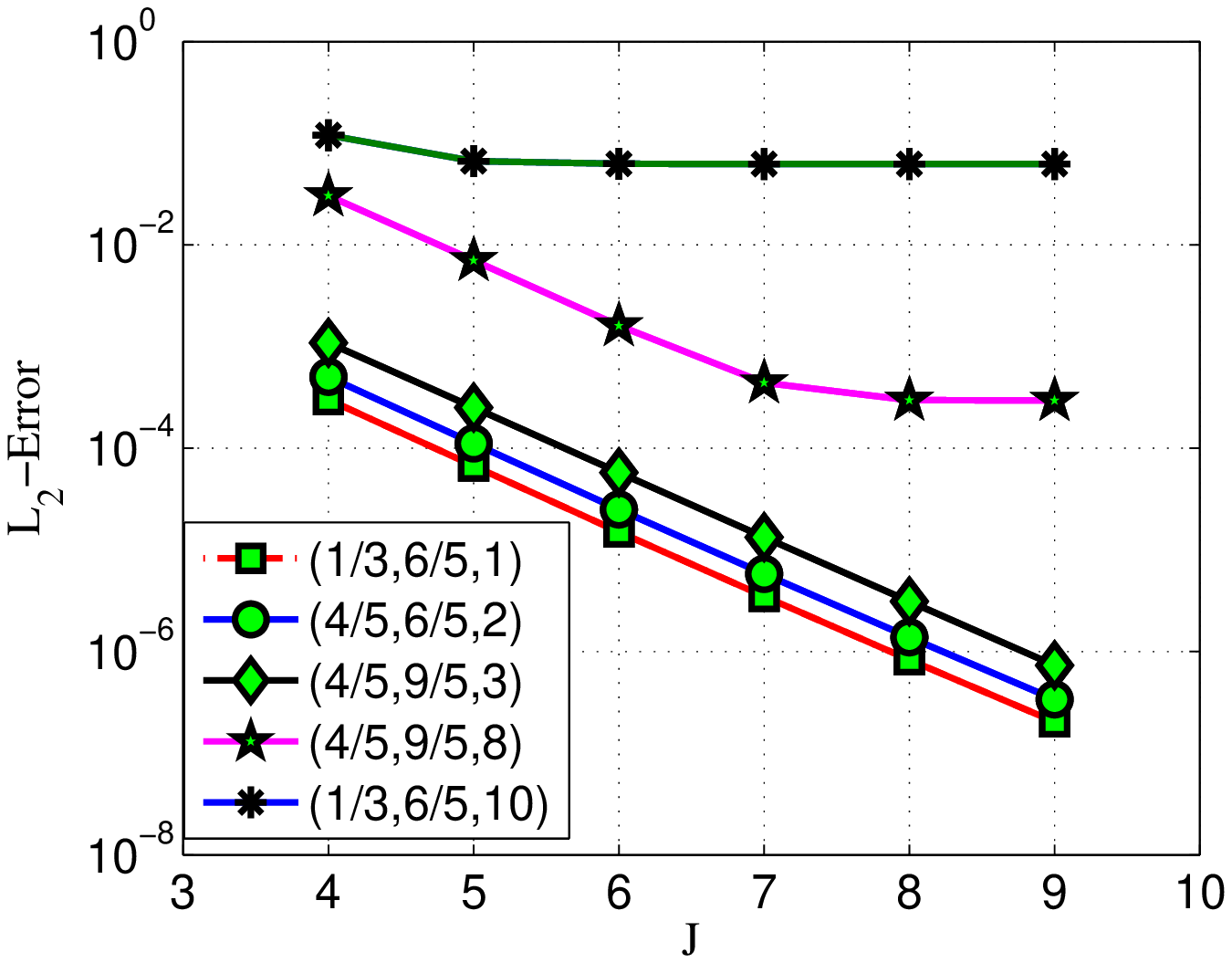}
\includegraphics[width=2.2in,height=1.4in,angle=0]{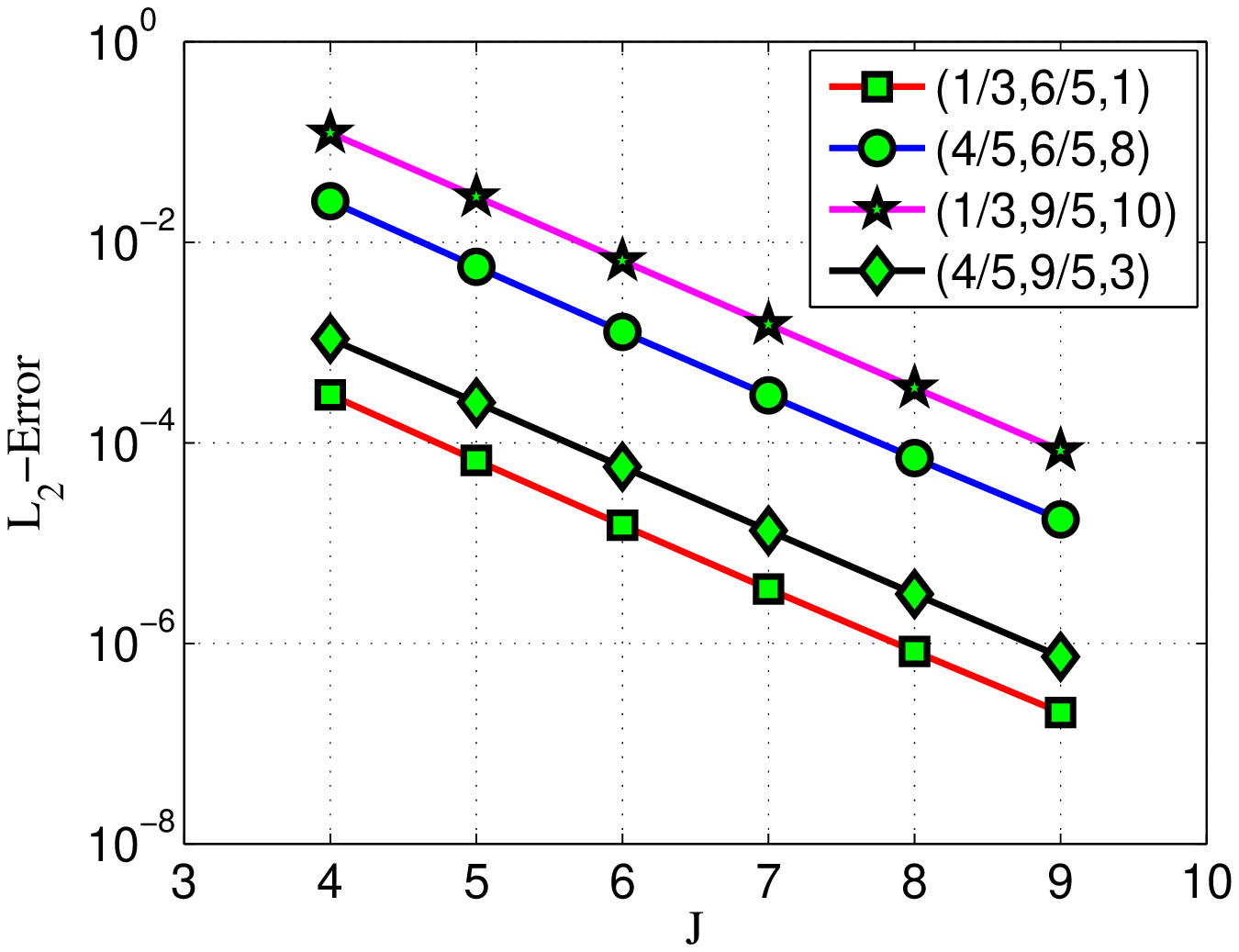}
\caption{Numerical results (semilogy coordinate) of the CF (left) and PC (right) schemes for Example \ref{example2} with $\lambda=3$ and $T=2$, where $(1/3,6/5,1)$ denotes $\gamma=1/3,\,\alpha=6/5$, and $\beta=1$. }\label{fig2}
\end{center}
\end{figure}

Since the existence of the non-real eigenvalues of ${\bf L}_h$, the DTI method fails here. Whereas the great strength of it lies in its simplicity and wide applied range. For example, if  one replaces the space derivative with the one-dimensional version of the fractional laplace  $-\left(-\triangle\right)^{\frac{\alpha}{2}}$ \cite{Yang:11,Yang:10} and defines ${\bf L}_h=\left(-\triangle_h\right)^{\frac{\alpha}{2}}, \,\left(\triangle_h\varphi, \chi\right)=-\left( \varphi^{\prime},\chi\prime\right)\, \forall \varphi,\chi\in S_h$, then the solution of (\ref{eqeeter4}) can be easily approximated  with the DTI method by just letting
   \begin{eqnarray}
{\rm \Pi}^{\gamma,\beta}v=\frac{t^{\beta-1}}{2\pi i}\int_{\mathcal{C}}E_{\gamma,\beta}(-K_{\gamma}t^{\gamma}z^{\frac{\alpha}{2}})\left(z{\bf I}-\left(-\triangle_h\right)\right)^{-1}v \,dz.
\end{eqnarray}
For $ g(x)=5sin(\pi x)\left(\cos(2\pi x)-1\right)$ and $f(x,t)=0$, the numerical results are presented in Table \ref{tab:2}.

\begin{table}\fontsize{8.5pt}{12pt}\selectfont
\begin{center}
 \caption{ The numerical results, solved by the DTI scheme, for Example \ref{example2}  with the spatial derivative $-\left(-\triangle\right)^{\frac{\alpha}{2}}$ and $T=5$. }
\begin{tabular} {c|cc|cc|cc|cc}  \hline
   $J$    &\multicolumn{2}{c|}{$\gamma=0.3,\alpha=1.2$}  &\multicolumn{2}{c|}{$\gamma=0.3,\alpha=1.8$} &\multicolumn{2}{c|}{$\gamma=0.8,\alpha=1.2$}  &\multicolumn{2}{c}{$\gamma=0.8,\alpha=1.8$}\\ \cline{2-9}
        & L2-Err&   CPU(s)     & L2-Err&  CPU(s)                               & L2-Err&   CPU(s)     & L2-Err&  CPU(s)   \\
      \hline
         $7$    & 2.7697e-05  & 0.0941  & 1.6318e-05   & 0.0562         &4.3443e-06   &0.0663     &  2.3827e-06  & 0.0557   \\
         $8$    &  6.9239e-06  & 0.1850  & 4.0796e-06   &0.1708         &1.0861e-06   &0.1770    & 5.9569e-07  & 0.1711     \\
         $9$    &  1.7310e-06  &1.4496  &1.0199e-06   & 1.4148          &2.7151e-07   &1.4591    &  1.4892e-07  & 1.4654 \\
    \hline
   \end{tabular}\label{tab:2}
\end{center}
\end{table}

\begin{example}\label{example3}
Consider the tempered time  fractional equation:
\begin{eqnarray}\label{example2eq}
{}_0^C\mathbb{D}_t^{\gamma,\lambda}u(x,t)= K_{\gamma} \frac{\partial ^2}{\partial x^2}u(x,t)+f(x,t)\quad 0<\gamma\le 1
\end{eqnarray}
with $ u(0,t)=u(1,t)=0, K_{\gamma}=1/\pi^2, g(x)=\sin (\pi x)$, and
\begin{eqnarray}
f(x,t)=w(t)e^{-\lambda t}g(x),\quad w(t)=\frac{\Gamma(\beta+1)t^{\beta-\gamma}}{\Gamma(\beta-\gamma+1)}+t^\beta+1.
\end{eqnarray}
Then the exact solution is $ u(x,t)=e^{-\lambda t}\left(t^\beta+1\right)\sin(\pi x)$.
\end{example}

Take the quadratic element space as $S_h$, which has the space convergence order $3$.
For $e^{\lambda(s-t)}{\bf P}_h f$, we use the quadratic interpolation in time; and the numerical performances are displayed in Table \ref{tab:3}. For comparison, we also
show the results of the $2-\gamma$ order $L_1$-time stepping scheme \cite{Deng:08} in last  two columns, i.e.,
\begin{eqnarray*}
{}_0^C\mathbb{D}_t^{\gamma,\lambda}v\left|_{t=t_k}\right.\approx \frac{e^{-\lambda t_k}}{\Gamma(1-\gamma)}\sum_{j=0}^{k-1}\int_{t_j}^{t_{j+1}}\left(t_k-s\right)^{-\gamma}\frac{\partial P_j}{\partial s}ds \quad k=1,\cdots, M=\lceil 2^{\frac{3J}{2-\gamma}}\rceil,
\end{eqnarray*}
where $P_j=e^{\lambda t_{j+1}}v(t_{j+1})+\frac{s-t_{j+1}}{\tau}\left(e^{\lambda t_{j+1}}v(t_{j+1})-e^{\lambda t_j}v(t_j)\right)$ and $\tau=\frac{T}{M}$.

The numerical results show that the time-direction errors of the CF, PC and DTI schemes are determined by the interpolation errors ($3$-order), which can be predesigned. Moreover, they are easy to do the parallel computing.  Since the CF scheme uses the least amount of quadrature points, it is fastest; and the PC scheme follows. The matlab ``eig" function  is used to obtain the extreme eigenvalues in the DTI scheme and the time is not included here.  In addition, if  $f(x,t)$ (w.r.t. time) is sufficiently smooth, a faster speed might be realized by the spectral  PC scheme. For example, if $\beta=9$,   the $16$ order Chebshev spectral  interpolation PC scheme with the time  $0.3078s,   0.1389s,    0.2287s$  is much better than the quadratic interpolation CF scheme with the time $0.1215s,    0.3248s,    1.3567s$; and the coefficients  $c_{M,j}$ in (\ref{eqnareee}) are roughly obtained by the ``chebfun.interp1" and ``poly" functions in the Chebfun project \cite{Driscoll:14}.
\begin{table}\fontsize{8.5pt}{12pt}\selectfont
\begin{center}
 \caption{ The numerical performance of Example \ref{example3} with $ \gamma=0.6, \lambda=1, \beta=4$, and $T=1$. Here ``CF,~~$2^J$" denotes that the CF scheme and $M=2^J$  equidistant partitions w.r.t. time are used; similar for the other ones. }
\begin{tabular} {c|cc|cc|cc|cc}  \hline
   $J$    &\multicolumn{2}{c|}{CF, \,$2^J$ }  &\multicolumn{2}{c|}{PC, \,$2^J$ } &\multicolumn{2}{c|}{DTI,\, $2^J$ }  &\multicolumn{2}{c}{$L_1$,\, $2^{2J}$}\\ \cline{2-9}
        & L2-Err&   CPU(s)     & L2-Err&  CPU(s)                               & L2-Err&   CPU(s)     & L2-Err&  CPU(s)   \\
    \hline
         $7$    &  4.4249e-08   & 0.1212  &4.4249e-08   &1.0614        & 4.4249e-08    &5.3500    & 1.2448e-07  &218.10     \\
         $8$    &  5.5308e-09   & 0.3569  &5.5303e-09   &3.9575        & 5.5303e-09    &12.452    & 1.5658e-08  & 5454.8     \\
         $9$    &   6.9161e-10  &1.4283  &6.9118e-10   &15.495         & 6.9117e-10     &25.808    & ----  &  $> 12$ hours            \\
    \hline
   \end{tabular}\label{tab:3}
\end{center}
\end{table}

\section{Conclusion}
The tempered anomalous diffusion attracts the wide interests of scientists. It is more close to reality in the sense that the physical space is bounded and the life span of the particles is finite. This paper focuses on providing the variational framework and efficient numerical implementation for the tempered PDEs describing the tempered anomalous diffusion. We first presented the variational properties of the tempered fractional derivatives,  which are used to establish the  Galerkin and Petrov-Galerkin method for solving the space tempered fractional differential equations. Meanwhile, we also studied the properties of the tempered fractional integrals,  which allow us to perform the theoretical analysis of the Perov-Galerkin method for the time tempered fractional equations. The efficient implementations, including the Galerkin and Petrov-Galerkin finite element method, the time  integrator, and the rational approximation method, are detailedly discussed. And the well performed numerical simulation results confirm the theoretical analysis and show the high efficiency of the schemes.

\def\ack{\section*{Acknowledgements}%
  \addtocontents{toc}{\protect\vspace{6pt}}%
  \addcontentsline{toc}{section}{Acknowledgements}}
\ack{The authors thank Xudong Wang for his help in the proof of Lemma \ref{lemmaeq2}. This work was supported by the National Natural Science Foundation of China under Grant  No. 11271173.}
\bibliographystyle{elsarticle-num}

\end{document}